\newcommand{\transpose}{^{\mathrm{T}}}
\newcommand{\Sref}{Section \ref}
\newcommand{\eref}{\eqref}
\newcommand{\Eref}{Equation \eqref}
\newcommand{\Tref}{Table \ref}
\newcommand{\Fref}{Figure \ref}
\begin{document}
\title{Stable Backward Diffusion Models that Minimise Convex Energies}
\author{Leif Bergerhoff \inst{1} \and
    Marcelo C{\'a}rdenas \inst{1} \and
    Joachim Weickert \inst{1} \and 
    Martin Welk \inst{2}}
\institute{%
    Mathematical Image Analysis Group,\\
    Faculty of Mathematics and Computer Science,\\
    Saarland University,\\
    Campus E1.7, 66041 Saarbr\"ucken, Germany\\
    \email{\{bergerhoff,cardenas,weickert\}@mia.uni-saarland.de}\\[1em]
    \and
    Institute of Biomedical Image Analysis,\\
    Private University for Health Sciences, Medical Informatics and 
    Technology,\\
    Eduard-Walln\"ofer-Zentrum 1,
    6060 Hall/Tyrol, Austria\\
    \email{martin.welk@umit.at}
}
%\author{Leif Bergerhoff \and
%        Marcelo C{\'a}rdenas \and
%        Joachim Weickert \and 
%        Martin Welk}
%%
%\institute{Leif Bergerhoff \and
%           Marcelo C{\'a}rdenas \and
%           Joachim Weickert \at
%               Mathematical Image Analysis Group,\\
%               Faculty of Mathematics and Computer Science,\\
%               Saarland University\\
%               Campus E1.7\\
%               66041 Saarbr\"ucken, Germany\\
%               Tel.: +49-681-302-57341\\
%               Fax: +49-681-302-57342\\
%               \email{\{bergerhoff,cardenas,weickert\}@mia.uni-saarland.de}
%           \and
%           Martin Welk \at
%               Institute of Biomedical Image Analysis,\\
%               Private University for Health Sciences, Medical Informatics and 
%               Technology,\\
%               Eduard-Walln\"ofer-Zentrum 1,\\
%               6060 Hall/Tyrol, Austria\\
%               Tel.: +43-50-8648-3974\\
%%               Fax: +43 50-8648-673836\\
%               \email{martin.welk@umit.at}
%}
%
% ------------------------------------------------------------------------------
%
%\date{Received: date / Accepted: date}
%
\maketitle
%
% ------------------------------------------------------------------------------
%
\begin{abstract}
The inverse problem of backward diffusion is known to be ill-posed and highly 
unstable. Backward diffusion processes appear naturally in image enhancement 
and deblurring applications. It is therefore greatly desirable to establish a 
backward diffusion model which implements a smart stabilisation approach that 
can be used in combination with an easy to handle numerical scheme.
So far, existing stabilisation strategies in literature require sophisticated 
numerics to solve the underlying initial value problem.
We derive a class of space-discrete one-dimensional backward diffusion as 
gradient descent of energies where we gain stability by imposing range 
constraints. Interestingly, these energies are even convex. Furthermore, we 
establish a comprehensive theory for the time-continuous evolution and we show 
that stability carries over to a simple explicit time discretisation of our 
model. Finally, we confirm the stability and usefulness of our technique in 
experiments in which we enhance the contrast of digital greyscale and colour 
images.
\keywords{Inverse Problem \and Backward Diffusion \and Modelling \and Convex 
Energy \and Gradient Descent \and Contrast Enhancement \and Image 
Processing}\\[1em]
{\textbf{Mathematics Subject Classification (2010):}\\
    76R50 \and
    60J60 \and
    58J65 \and
    37N30 \and
    37N40 \and
    65D18 \and
    68U10}
%\subclass{\\ 76R50 \and
%          60J60 \and
%          58J65 \and
%          37N30 \and
%          37N40 \and
%          65D18 \and
%          68U10}
%
\end{abstract}
%
% ------------------------------------------------------------------------------
%
\section{Introduction}
\label{sec:intro}
Forward diffusion processes are well-suited to describe the smoothing of a 
given signal or image. This process of blurring implies a loss of high 
frequencies or details in the original data.
As a result, the inverse problem, backward diffusion, suffers from deficient 
information which are needed to uniquely reconstruct the original data. The 
introduction of noise due to measured data increases this difficulty even 
further.
Consequently, a solution to the inverse problem -- if it exists at all -- is 
highly sensitive and heavily depends on the input data: Even the smallest 
perturbation in the initial data can have a large impact on the evolution 
and may cause large deviations.
Therefore, it becomes clear that backward diffusion processes necessitate 
further stabilisation.

\paragraph{Previous Work on Backward Diffusion.}
Already more than 60 years ago, John \cite{Jo55} discussed the quality of a 
numerical solution to the inverse diffusion problem given that a solution 
exists, and that it is bounded and non-negative.
Since then, a large number of different regularisation methods have evolved 
which achieve stability by bounding the noise of the measured and 
the unperturbed data \cite{TS96}, by operator splitting \cite{KW02}, by Fourier 
regularisation \cite{FXQ07}, or by a modified Tikhonov regulariser \cite{ZM11}.
H\`ao and Duc \cite{HD09} suggest a mollification method where stability for 
the inverse diffusion problem follows from a convolution with the Dirichlet 
kernel. In \cite{HD11} the same authors provide a regularisation method 
for backward parabolic equations with time-dependent coefficients.
Ternat et al. \cite{TOD11} suggest low-pass filters and fourth-order 
regularisation terms for stabilisation.

Backward parabolic differential equations also enjoy high popularity in the 
image analysis community where they have e.g. been used for image restoration 
and image deblurring respectively.
The first contribution to backward diffusion dates back to 1955 when 
Kov\'asznay and Joseph \cite{KJ55} proposed to use the scaled negative 
Laplacian for contour enhancement.
Gabor \cite{Ga65} observed that the isotropy of the Laplacian 
operator leads to amplification of accidental noise at contour lines at the 
same time it enhances the contour lines. As a remedy, he proposed to restrict 
the contrast enhancement to the orthogonal contour direction and -- in a second 
step -- suggested additional smoothing in tangent direction.
Lindenbaum et al. \cite{LFB94} make use of averaged derivatives in order to 
improve the directional sensitive filter by Gabor. However, the 
authors point out that smoothing in only one direction favours the 
emergence of artefacts in nearly isotropic image regions. They recommend to use 
the Perona-Malik filter \cite{PM90} instead.
Forces of Perona-Malik type are also used by Pollak et al. 
\cite{PWK00} who specify a family of evolution equations to sharpen edges and 
suppress noise in the context of image segmentation.
In \cite{HFSWV94}, ter Haar Romeny et al. stress the influence of higher order 
time derivatives on the Gaussian deblurred image. Referring to the heat 
equation, the authors express the time derivatives in the spatial domain and 
approximate them using Gaussian derivatives.
Steiner et al. \cite{St98} highlight how backward diffusion can be used for 
feature enhancement in planar curves.

In the field of image processing, a frequently used stabilisation technique 
constrains the extrema in order to 
enforce a maximum-minimum principle. This is e.g. implemented in the inverse 
diffusion filter of Osher and Rudin \cite{OR91}. It imposes zero diffusivities 
at extrema and applies backward diffusion everywhere else.
The so-called forward-and-backward (FAB) diffusion of Gilboa et al. 
\cite{GSZ02a} follows a slightly different approach.
Closely related to the Perona-Malik filter \cite{PM90} it uses negative 
diffusivities for a specific range of gradient magnitudes.
On the other hand, it imposes forward diffusion for values of low and zero 
gradient magnitude. By doing so, the filter prevents the output values from 
exploding at extrema.
However, it is worth mentioning that -- so far -- all adequate implementations 
of inverse diffusion processes with forward or zero diffusivities at extrema 
require sophisticated numerical schemes. They use e.g.~minmod discretisations 
of the Laplacian \cite{OR91}, nonstandard finite difference approximations of 
the squared gradient magnitude \cite{WGW09}, and splittings into two-pixel 
interactions \cite{WWG18}.

Another, less popular stabilisation approach implies the application of a 
fidelity term and has been used to penalise deviations from the input image 
\cite{CSH78,SZ98} or from the average grey value of the desired range 
\cite{SC97}. Consequently, both the weights of the fidelity and the diffusion 
term control the range of the filtered image.

Further methods achieve stabilisation using a regularisation strategy built on 
FFT-based operators \cite{Ca14,Ca16,Ca17} and by the restriction to polynomials 
of fixed finite degree \cite{HKZ87}.
Mair et al. \cite{MWR96} discuss the well-posedness of deblurring Gaussian blur 
in the discrete image domain based on analytic number theory.

In summary, the presented methods offer an insight into the challenge of 
handling backward diffusion in practice and underline the demand for careful 
stabilisation strategies and sophisticated numerical methods.

In our paper we are going to present an alternative
approach to deal with backward diffusion problems. It prefers
smarter modelling over smarter numerics.
To understand it better, it is useful to recapitulate
some relations between diffusion and energy minimisation.

\paragraph{Diffusion and Energy Minimisation.}
For the sake of convenience we assume a one-dimensional evolution that 
smoothes an initial signal $f: [a,b] \to \mathbb{R}$.
In this context, the original signal $f$ serves as initial state of the 
diffusion equation
\begin{equation} \label{eq:fd}
\partial_t u = \partial_x \bigl(g(u_x^2)\,u_x\bigr)
\end{equation} 
where $u=u(x,t)$ represents the filtered outcome with $u(x,0)=f(x)$.
Additionally, let $u_x = \partial_x u$ and assume reflecting boundary 
conditions at $x=a$ and $x=b$.
Given a nonnegative diffusivity function $g$,
growing diffusion times $t$ lead to simpler representations of the input signal.
From Perona and Malik's work \cite{PM90} we know
that the smoothing effect at signal edges
can be reduced if $g$ is a decreasing function of the contrast $u_x^2$.
As long as the flux function $\varPhi(u_x) := g(u_x^2)\,u_x$ is strictly 
increasing in $u_x$ the corresponding forward diffusion process $\partial_tu = 
\varPhi'(u_x) u_{xx}$ involves no edge sharpening.
This diffusion can be regarded as the gradient descent 
evolution which minimises the energy
\begin{equation} \label{eq:energy}
E[u] = \int_a^b \varPsi(u_x^2) \, \mathrm{d}x .
\end{equation} 
The potential function $\tilde{\varPsi}(u_x)=\varPsi(u_x^2)$ is strictly convex 
in $u_x$, increasing in $u_x^2$, and fulfils $\varPsi'(u_x^2)=g(u_x^2)$.
Furthermore, the energy functional has a flat minimiser which is -- due to the 
strict convexity of the energy functional -- unique.
The gradient descent / diffusion evolution is well-posed and converges towards 
this minimiser for $t \to \infty$.
Due to this classical
emergence of well-posed forward diffusion
from strictly convex energies
it seems natural to believe that backward diffusion processes are necessarily 
associated with non-convex energies.
However, as we will see, this conjecture is wrong.
\paragraph{Our Contribution.}
In our article, we show that a specific class of backward diffusion processes 
are gradient descent evolutions of energies that have one unexpected property: 
They are convex! Our second innovation is the incorporation of a specific
constraint: We impose reflecting boundary conditions in the diffusion 
\emph{co-domain}.
This means that in case of greyscale images with an allowed grey value range of 
$(0, 255)$ the occurring values are mirrored at the boundary positions $0$ and 
$255$.
While such range constraints have shown their usefulness in 
some other context (see e.g. \cite{NS14}), to our knowledge they have never 
been used for stabilising backward diffusions.
For our novel backward diffusion models, we show also a surprising numerical 
fact: A simple explicit scheme turns out to be stable and convergent.
Last but not least, we apply our models to the contrast
enhancement of greyscale and colour images.

This article is a revised version of our conference contribution 
\cite{BCWW17} which we extend in several aspects.
First, we enhance our model for convex backward diffusion to support not only a 
global and weighted setting but also a localised variant.
We analyse this extended model in terms of stability and convergence towards a 
unique minimiser.
Furthermore, we
formulate a simple explicit scheme for our newly proposed 
approach which shares all important properties with the time-continuous 
evolution.
In this context, we provide a detailed discussion on the selection of 
suitable time step sizes.
Additionally, we suggest two new applications: 
global contrast enhancement of digital colour images and local contrast 
enhancement of digital grey and colour images.

\paragraph{Structure of the Paper.}
In \Sref{sec:model}, we present our model for convex backward diffusion 
with range constraints. We describe a general approach which allows to 
formulate weighted local and global evolutions. \Sref{sec:theory} 
includes proofs for model properties such as range and rank-order preservation 
as well as convergence analysis and the derivation of explicit steady-state 
solutions. \Sref{sec:numerics} provides a simple explicit scheme which
can be used to solve the occurring initial-value problem.
In \Sref{sec:application}, we explain how to enhance the global and 
local contrast of digital images using the proposed model. Furthermore, 
we discuss the relation to existing literature on contrast enhancement.
In \Sref{sec:summary}, we draw conclusions from our findings and give an 
outlook on future research.
%
% ------------------------------------------------------------------------------
% ------------------------------------------------------------------------------
%
\section{Model}
\label{sec:model}
Let us now explore the roots of our model and derive -- in a second 
step -- the particle evolution which forms the heart of our method and which is 
given by the gradient descent of a convex energy.
%
% ------------------------------------------------------------------------------
%
\subsection{Motivation from Swarm Dynamics}
The idea behind our model goes back to the scenario of describing a 
one-dimensional 
evolution of particles within a closed system. Recent literature on 
mathematical swarm models employs a pairwise potential $U: \mathbb{R}^n \to 
\mathbb{R}$ to characterise the behaviour of individual particles (see e.g. 
\cite{CFTV10,CHDB07,OCBC06,GF07,GP04} and the references therein). The 
potential function allows to steer 
attractive and repulsive forces among swarm mates. 
Physically simplified models like \cite{GP03} neglect inertia and describe the 
individual particle velocity
%\hspace{0.0001mm}
\hphantom{d}
$\mathclap{\partial_t \bm{v}_i}$
\hphantom{v}
%\hspace{0.0001mm}
within a swarm of size $N$ directly as
\begin{equation}
\partial_t \bm{v}_i =
-\sum\limits_{\scriptstyle j = 1 \atop \scriptstyle j \neq i}^N
\bm{\nabla} U(|\bm{v}_i-\bm{v}_j|) ,
\qquad i = 1,\ldots,N ,
\end{equation}
where $\bm{v}_i$ and $\bm{v}_j$ denote particle positions in $\mathbb{R}^n$.
These models are also referred to as first order models.
Often they are inspired by biology and describe long-range attractive and 
short-range repulsive behaviour between swarm members. The interplay of 
attractive and repulsive forces leads to flocking and allows to gain stability 
for the whole swarm.
Inverting this behaviour -- resulting in short-range attractive and long-range 
repulsive forces -- leads to a highly unstable scenario in which the swarm 
splits up into small separating groups which might never reach a point where 
they stand still.
One would expect
that a restriction to repulsive forces only will increase this 
instability even further.
However, we will present a model which copes well with exactly this situation.
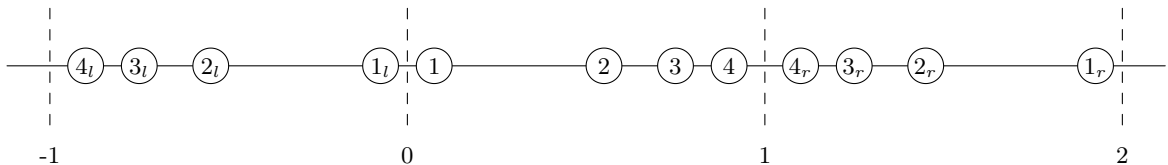
\begin{figure*}
    \centering
    \begin{tikzpicture}[x=1pt,y=1pt]
    \definecolor{fillColor}{RGB}{255,255,255}
    \path[use as bounding box,fill=fillColor,fill opacity=0.00] (0,0) rectangle 
    (433.62,108.41);
    \begin{scope}
    \path[clip] (  0.00,  0.00) rectangle (433.62,108.41);
    \definecolor{drawColor}{RGB}{0,0,0}
    
    \path[draw=drawColor,line width= 0.4pt,line join=round,line cap=round] (  
    0.00, 70.93) -- (433.62, 70.93);
    \definecolor{fillColor}{RGB}{255,255,255}
    
    \path[draw=drawColor,line width= 0.4pt,line join=round,line 
    cap=round,fill=fillColor] (146.55, 70.93) --
    (146.53, 71.35) --
    (146.49, 71.77) --
    (146.43, 72.19) --
    (146.34, 72.60) --
    (146.22, 73.00) --
    (146.08, 73.40) --
    (145.91, 73.78) --
    (145.72, 74.16) --
    (145.51, 74.52) --
    (145.27, 74.86) --
    (145.01, 75.20) --
    (144.73, 75.51) --
    (144.44, 75.81) --
    (144.12, 76.09) --
    (143.79, 76.35) --
    (143.44, 76.58) --
    (143.08, 76.80) --
    (142.71, 76.99) --
    (142.32, 77.15) --
    (141.92, 77.30) --
    (141.52, 77.41) --
    (141.11, 77.50) --
    (140.69, 77.57) --
    (140.28, 77.61) --
    (139.86, 77.62) --
    (139.44, 77.61) --
    (139.02, 77.57) --
    (138.60, 77.50) --
    (138.19, 77.41) --
    (137.79, 77.30) --
    (137.39, 77.15) --
    (137.01, 76.99) --
    (136.63, 76.80) --
    (136.27, 76.58) --
    (135.92, 76.35) --
    (135.59, 76.09) --
    (135.28, 75.81) --
    (134.98, 75.51) --
    (134.70, 75.20) --
    (134.44, 74.86) --
    (134.21, 74.52) --
    (133.99, 74.16) --
    (133.80, 73.78) --
    (133.63, 73.40) --
    (133.49, 73.00) --
    (133.37, 72.60) --
    (133.28, 72.19) --
    (133.22, 71.77) --
    (133.18, 71.35) --
    (133.16, 70.93) --
    (133.18, 70.51) --
    (133.22, 70.09) --
    (133.28, 69.68) --
    (133.37, 69.27) --
    (133.49, 68.86) --
    (133.63, 68.47) --
    (133.80, 68.08) --
    (133.99, 67.71) --
    (134.21, 67.35) --
    (134.44, 67.00) --
    (134.70, 66.67) --
    (134.98, 66.35) --
    (135.28, 66.05) --
    (135.59, 65.78) --
    (135.92, 65.52) --
    (136.27, 65.28) --
    (136.63, 65.07) --
    (137.01, 64.88) --
    (137.39, 64.71) --
    (137.79, 64.57) --
    (138.19, 64.45) --
    (138.60, 64.36) --
    (139.02, 64.29) --
    (139.44, 64.25) --
    (139.86, 64.24) --
    (140.28, 64.25) --
    (140.69, 64.29) --
    (141.11, 64.36) --
    (141.52, 64.45) --
    (141.92, 64.57) --
    (142.32, 64.71) --
    (142.71, 64.88) --
    (143.08, 65.07) --
    (143.44, 65.28) --
    (143.79, 65.52) --
    (144.12, 65.78) --
    (144.44, 66.05) --
    (144.73, 66.35) --
    (145.01, 66.67) --
    (145.27, 67.00) --
    (145.51, 67.35) --
    (145.72, 67.71) --
    (145.91, 68.08) --
    (146.08, 68.47) --
    (146.22, 68.86) --
    (146.34, 69.27) --
    (146.43, 69.68) --
    (146.49, 70.09) --
    (146.53, 70.51) --
    cycle;
    
    \node[text=drawColor,anchor=base,inner sep=0pt, outer sep=0pt, scale=  
    1.00] at 
    (139.86, 67.72) {\footnotesize $1_l$};
    
    \path[draw=drawColor,line width= 0.4pt,line join=round,line 
    cap=round,fill=fillColor] (166.62, 70.93) --
    (166.61, 71.35) --
    (166.57, 71.77) --
    (166.50, 72.19) --
    (166.41, 72.60) --
    (166.29, 73.00) --
    (166.15, 73.40) --
    (165.99, 73.78) --
    (165.79, 74.16) --
    (165.58, 74.52) --
    (165.34, 74.86) --
    (165.09, 75.20) --
    (164.81, 75.51) --
    (164.51, 75.81) --
    (164.20, 76.09) --
    (163.86, 76.35) --
    (163.52, 76.58) --
    (163.15, 76.80) --
    (162.78, 76.99) --
    (162.39, 77.15) --
    (162.00, 77.30) --
    (161.59, 77.41) --
    (161.18, 77.50) --
    (160.77, 77.57) --
    (160.35, 77.61) --
    (159.93, 77.62) --
    (159.51, 77.61) --
    (159.09, 77.57) --
    (158.68, 77.50) --
    (158.27, 77.41) --
    (157.86, 77.30) --
    (157.47, 77.15) --
    (157.08, 76.99) --
    (156.71, 76.80) --
    (156.35, 76.58) --
    (156.00, 76.35) --
    (155.67, 76.09) --
    (155.35, 75.81) --
    (155.05, 75.51) --
    (154.77, 75.20) --
    (154.52, 74.86) --
    (154.28, 74.52) --
    (154.07, 74.16) --
    (153.88, 73.78) --
    (153.71, 73.40) --
    (153.57, 73.00) --
    (153.45, 72.60) --
    (153.36, 72.19) --
    (153.29, 71.77) --
    (153.25, 71.35) --
    (153.24, 70.93) --
    (153.25, 70.51) --
    (153.29, 70.09) --
    (153.36, 69.68) --
    (153.45, 69.27) --
    (153.57, 68.86) --
    (153.71, 68.47) --
    (153.88, 68.08) --
    (154.07, 67.71) --
    (154.28, 67.35) --
    (154.52, 67.00) --
    (154.77, 66.67) --
    (155.05, 66.35) --
    (155.35, 66.05) --
    (155.67, 65.78) --
    (156.00, 65.52) --
    (156.35, 65.28) --
    (156.71, 65.07) --
    (157.08, 64.88) --
    (157.47, 64.71) --
    (157.86, 64.57) --
    (158.27, 64.45) --
    (158.68, 64.36) --
    (159.09, 64.29) --
    (159.51, 64.25) --
    (159.93, 64.24) --
    (160.35, 64.25) --
    (160.77, 64.29) --
    (161.18, 64.36) --
    (161.59, 64.45) --
    (162.00, 64.57) --
    (162.39, 64.71) --
    (162.78, 64.88) --
    (163.15, 65.07) --
    (163.52, 65.28) --
    (163.86, 65.52) --
    (164.20, 65.78) --
    (164.51, 66.05) --
    (164.81, 66.35) --
    (165.09, 66.67) --
    (165.34, 67.00) --
    (165.58, 67.35) --
    (165.79, 67.71) --
    (165.99, 68.08) --
    (166.15, 68.47) --
    (166.29, 68.86) --
    (166.41, 69.27) --
    (166.50, 69.68) --
    (166.57, 70.09) --
    (166.61, 70.51) --
    cycle;
    
    \node[text=drawColor,anchor=base,inner sep=0pt, outer sep=0pt, scale=  
    1.00] at 
    (159.93, 67.72) {\footnotesize $1$};
    
    \path[draw=drawColor,line width= 0.4pt,line join=round,line 
    cap=round,fill=fillColor] (414.21, 70.93) --
    (414.20, 71.35) --
    (414.16, 71.77) --
    (414.10, 72.19) --
    (414.00, 72.60) --
    (413.89, 73.00) --
    (413.74, 73.40) --
    (413.58, 73.78) --
    (413.39, 74.16) --
    (413.17, 74.52) --
    (412.94, 74.86) --
    (412.68, 75.20) --
    (412.40, 75.51) --
    (412.10, 75.81) --
    (411.79, 76.09) --
    (411.46, 76.35) --
    (411.11, 76.58) --
    (410.75, 76.80) --
    (410.37, 76.99) --
    (409.99, 77.15) --
    (409.59, 77.30) --
    (409.19, 77.41) --
    (408.78, 77.50) --
    (408.36, 77.57) --
    (407.94, 77.61) --
    (407.52, 77.62) --
    (407.10, 77.61) --
    (406.68, 77.57) --
    (406.27, 77.50) --
    (405.86, 77.41) --
    (405.45, 77.30) --
    (405.06, 77.15) --
    (404.67, 76.99) --
    (404.30, 76.80) --
    (403.94, 76.58) --
    (403.59, 76.35) --
    (403.26, 76.09) --
    (402.94, 75.81) --
    (402.64, 75.51) --
    (402.37, 75.20) --
    (402.11, 74.86) --
    (401.87, 74.52) --
    (401.66, 74.16) --
    (401.47, 73.78) --
    (401.30, 73.40) --
    (401.16, 73.00) --
    (401.04, 72.60) --
    (400.95, 72.19) --
    (400.88, 71.77) --
    (400.84, 71.35) --
    (400.83, 70.93) --
    (400.84, 70.51) --
    (400.88, 70.09) --
    (400.95, 69.68) --
    (401.04, 69.27) --
    (401.16, 68.86) --
    (401.30, 68.47) --
    (401.47, 68.08) --
    (401.66, 67.71) --
    (401.87, 67.35) --
    (402.11, 67.00) --
    (402.37, 66.67) --
    (402.64, 66.35) --
    (402.94, 66.05) --
    (403.26, 65.78) --
    (403.59, 65.52) --
    (403.94, 65.28) --
    (404.30, 65.07) --
    (404.67, 64.88) --
    (405.06, 64.71) --
    (405.45, 64.57) --
    (405.86, 64.45) --
    (406.27, 64.36) --
    (406.68, 64.29) --
    (407.10, 64.25) --
    (407.52, 64.24) --
    (407.94, 64.25) --
    (408.36, 64.29) --
    (408.78, 64.36) --
    (409.19, 64.45) --
    (409.59, 64.57) --
    (409.99, 64.71) --
    (410.37, 64.88) --
    (410.75, 65.07) --
    (411.11, 65.28) --
    (411.46, 65.52) --
    (411.79, 65.78) --
    (412.10, 66.05) --
    (412.40, 66.35) --
    (412.68, 66.67) --
    (412.94, 67.00) --
    (413.17, 67.35) --
    (413.39, 67.71) --
    (413.58, 68.08) --
    (413.74, 68.47) --
    (413.89, 68.86) --
    (414.00, 69.27) --
    (414.10, 69.68) --
    (414.16, 70.09) --
    (414.20, 70.51) --
    cycle;
    
    \node[text=drawColor,anchor=base,inner sep=0pt, outer sep=0pt, scale=  
    1.00] at 
    (407.52, 67.72) {\footnotesize $1_r$};
    
    \path[draw=drawColor,line width= 0.4pt,line join=round,line 
    cap=round,fill=fillColor] ( 82.98, 70.93) --
    ( 82.96, 71.35) --
    ( 82.92, 71.77) --
    ( 82.86, 72.19) --
    ( 82.77, 72.60) --
    ( 82.65, 73.00) --
    ( 82.51, 73.40) --
    ( 82.34, 73.78) --
    ( 82.15, 74.16) --
    ( 81.93, 74.52) --
    ( 81.70, 74.86) --
    ( 81.44, 75.20) --
    ( 81.16, 75.51) --
    ( 80.87, 75.81) --
    ( 80.55, 76.09) --
    ( 80.22, 76.35) --
    ( 79.87, 76.58) --
    ( 79.51, 76.80) --
    ( 79.13, 76.99) --
    ( 78.75, 77.15) --
    ( 78.35, 77.30) --
    ( 77.95, 77.41) --
    ( 77.54, 77.50) --
    ( 77.12, 77.57) --
    ( 76.71, 77.61) --
    ( 76.29, 77.62) --
    ( 75.86, 77.61) --
    ( 75.45, 77.57) --
    ( 75.03, 77.50) --
    ( 74.62, 77.41) --
    ( 74.22, 77.30) --
    ( 73.82, 77.15) --
    ( 73.44, 76.99) --
    ( 73.06, 76.80) --
    ( 72.70, 76.58) --
    ( 72.35, 76.35) --
    ( 72.02, 76.09) --
    ( 71.70, 75.81) --
    ( 71.41, 75.51) --
    ( 71.13, 75.20) --
    ( 70.87, 74.86) --
    ( 70.64, 74.52) --
    ( 70.42, 74.16) --
    ( 70.23, 73.78) --
    ( 70.06, 73.40) --
    ( 69.92, 73.00) --
    ( 69.80, 72.60) --
    ( 69.71, 72.19) --
    ( 69.65, 71.77) --
    ( 69.61, 71.35) --
    ( 69.59, 70.93) --
    ( 69.61, 70.51) --
    ( 69.65, 70.09) --
    ( 69.71, 69.68) --
    ( 69.80, 69.27) --
    ( 69.92, 68.86) --
    ( 70.06, 68.47) --
    ( 70.23, 68.08) --
    ( 70.42, 67.71) --
    ( 70.64, 67.35) --
    ( 70.87, 67.00) --
    ( 71.13, 66.67) --
    ( 71.41, 66.35) --
    ( 71.70, 66.05) --
    ( 72.02, 65.78) --
    ( 72.35, 65.52) --
    ( 72.70, 65.28) --
    ( 73.06, 65.07) --
    ( 73.44, 64.88) --
    ( 73.82, 64.71) --
    ( 74.22, 64.57) --
    ( 74.62, 64.45) --
    ( 75.03, 64.36) --
    ( 75.45, 64.29) --
    ( 75.86, 64.25) --
    ( 76.29, 64.24) --
    ( 76.71, 64.25) --
    ( 77.12, 64.29) --
    ( 77.54, 64.36) --
    ( 77.95, 64.45) --
    ( 78.35, 64.57) --
    ( 78.75, 64.71) --
    ( 79.13, 64.88) --
    ( 79.51, 65.07) --
    ( 79.87, 65.28) --
    ( 80.22, 65.52) --
    ( 80.55, 65.78) --
    ( 80.87, 66.05) --
    ( 81.16, 66.35) --
    ( 81.44, 66.67) --
    ( 81.70, 67.00) --
    ( 81.93, 67.35) --
    ( 82.15, 67.71) --
    ( 82.34, 68.08) --
    ( 82.51, 68.47) --
    ( 82.65, 68.86) --
    ( 82.77, 69.27) --
    ( 82.86, 69.68) --
    ( 82.92, 70.09) --
    ( 82.96, 70.51) --
    cycle;
    
    \node[text=drawColor,anchor=base,inner sep=0pt, outer sep=0pt, scale=  
    1.00] at 
    ( 76.29, 67.72) {\footnotesize $2_l$};
    
    \path[draw=drawColor,line width= 0.4pt,line join=round,line 
    cap=round,fill=fillColor] (230.19, 70.93) --
    (230.18, 71.35) --
    (230.14, 71.77) --
    (230.07, 72.19) --
    (229.98, 72.60) --
    (229.87, 73.00) --
    (229.72, 73.40) --
    (229.56, 73.78) --
    (229.37, 74.16) --
    (229.15, 74.52) --
    (228.92, 74.86) --
    (228.66, 75.20) --
    (228.38, 75.51) --
    (228.08, 75.81) --
    (227.77, 76.09) --
    (227.43, 76.35) --
    (227.09, 76.58) --
    (226.73, 76.80) --
    (226.35, 76.99) --
    (225.97, 77.15) --
    (225.57, 77.30) --
    (225.17, 77.41) --
    (224.76, 77.50) --
    (224.34, 77.57) --
    (223.92, 77.61) --
    (223.50, 77.62) --
    (223.08, 77.61) --
    (222.66, 77.57) --
    (222.25, 77.50) --
    (221.84, 77.41) --
    (221.43, 77.30) --
    (221.04, 77.15) --
    (220.65, 76.99) --
    (220.28, 76.80) --
    (219.92, 76.58) --
    (219.57, 76.35) --
    (219.24, 76.09) --
    (218.92, 75.81) --
    (218.62, 75.51) --
    (218.35, 75.20) --
    (218.09, 74.86) --
    (217.85, 74.52) --
    (217.64, 74.16) --
    (217.45, 73.78) --
    (217.28, 73.40) --
    (217.14, 73.00) --
    (217.02, 72.60) --
    (216.93, 72.19) --
    (216.86, 71.77) --
    (216.82, 71.35) --
    (216.81, 70.93) --
    (216.82, 70.51) --
    (216.86, 70.09) --
    (216.93, 69.68) --
    (217.02, 69.27) --
    (217.14, 68.86) --
    (217.28, 68.47) --
    (217.45, 68.08) --
    (217.64, 67.71) --
    (217.85, 67.35) --
    (218.09, 67.00) --
    (218.35, 66.67) --
    (218.62, 66.35) --
    (218.92, 66.05) --
    (219.24, 65.78) --
    (219.57, 65.52) --
    (219.92, 65.28) --
    (220.28, 65.07) --
    (220.65, 64.88) --
    (221.04, 64.71) --
    (221.43, 64.57) --
    (221.84, 64.45) --
    (222.25, 64.36) --
    (222.66, 64.29) --
    (223.08, 64.25) --
    (223.50, 64.24) --
    (223.92, 64.25) --
    (224.34, 64.29) --
    (224.76, 64.36) --
    (225.17, 64.45) --
    (225.57, 64.57) --
    (225.97, 64.71) --
    (226.35, 64.88) --
    (226.73, 65.07) --
    (227.09, 65.28) --
    (227.43, 65.52) --
    (227.77, 65.78) --
    (228.08, 66.05) --
    (228.38, 66.35) --
    (228.66, 66.67) --
    (228.92, 67.00) --
    (229.15, 67.35) --
    (229.37, 67.71) --
    (229.56, 68.08) --
    (229.72, 68.47) --
    (229.87, 68.86) --
    (229.98, 69.27) --
    (230.07, 69.68) --
    (230.14, 70.09) --
    (230.18, 70.51) --
    cycle;
    
    \node[text=drawColor,anchor=base,inner sep=0pt, outer sep=0pt, scale=  
    1.00] at 
    (223.50, 67.72) {\footnotesize $2$};
    
    \path[draw=drawColor,line width= 0.4pt,line join=round,line 
    cap=round,fill=fillColor] (350.64, 70.93) --
    (350.63, 71.35) --
    (350.59, 71.77) --
    (350.52, 72.19) --
    (350.43, 72.60) --
    (350.32, 73.00) --
    (350.17, 73.40) --
    (350.01, 73.78) --
    (349.82, 74.16) --
    (349.60, 74.52) --
    (349.37, 74.86) --
    (349.11, 75.20) --
    (348.83, 75.51) --
    (348.53, 75.81) --
    (348.22, 76.09) --
    (347.88, 76.35) --
    (347.54, 76.58) --
    (347.18, 76.80) --
    (346.80, 76.99) --
    (346.42, 77.15) --
    (346.02, 77.30) --
    (345.62, 77.41) --
    (345.21, 77.50) --
    (344.79, 77.57) --
    (344.37, 77.61) --
    (343.95, 77.62) --
    (343.53, 77.61) --
    (343.11, 77.57) --
    (342.70, 77.50) --
    (342.29, 77.41) --
    (341.88, 77.30) --
    (341.49, 77.15) --
    (341.10, 76.99) --
    (340.73, 76.80) --
    (340.37, 76.58) --
    (340.02, 76.35) --
    (339.69, 76.09) --
    (339.37, 75.81) --
    (339.07, 75.51) --
    (338.80, 75.20) --
    (338.54, 74.86) --
    (338.30, 74.52) --
    (338.09, 74.16) --
    (337.90, 73.78) --
    (337.73, 73.40) --
    (337.59, 73.00) --
    (337.47, 72.60) --
    (337.38, 72.19) --
    (337.31, 71.77) --
    (337.27, 71.35) --
    (337.26, 70.93) --
    (337.27, 70.51) --
    (337.31, 70.09) --
    (337.38, 69.68) --
    (337.47, 69.27) --
    (337.59, 68.86) --
    (337.73, 68.47) --
    (337.90, 68.08) --
    (338.09, 67.71) --
    (338.30, 67.35) --
    (338.54, 67.00) --
    (338.80, 66.67) --
    (339.07, 66.35) --
    (339.37, 66.05) --
    (339.69, 65.78) --
    (340.02, 65.52) --
    (340.37, 65.28) --
    (340.73, 65.07) --
    (341.10, 64.88) --
    (341.49, 64.71) --
    (341.88, 64.57) --
    (342.29, 64.45) --
    (342.70, 64.36) --
    (343.11, 64.29) --
    (343.53, 64.25) --
    (343.95, 64.24) --
    (344.37, 64.25) --
    (344.79, 64.29) --
    (345.21, 64.36) --
    (345.62, 64.45) --
    (346.02, 64.57) --
    (346.42, 64.71) --
    (346.80, 64.88) --
    (347.18, 65.07) --
    (347.54, 65.28) --
    (347.88, 65.52) --
    (348.22, 65.78) --
    (348.53, 66.05) --
    (348.83, 66.35) --
    (349.11, 66.67) --
    (349.37, 67.00) --
    (349.60, 67.35) --
    (349.82, 67.71) --
    (350.01, 68.08) --
    (350.17, 68.47) --
    (350.32, 68.86) --
    (350.43, 69.27) --
    (350.52, 69.68) --
    (350.59, 70.09) --
    (350.63, 70.51) --
    cycle;
    
    \node[text=drawColor,anchor=base,inner sep=0pt, outer sep=0pt, scale=  
    1.00] at 
    (343.95, 67.72) {\footnotesize $2_r$};
    
    \path[draw=drawColor,line width= 0.4pt,line join=round,line 
    cap=round,fill=fillColor] ( 56.21, 70.93) --
    ( 56.20, 71.35) --
    ( 56.16, 71.77) --
    ( 56.09, 72.19) --
    ( 56.00, 72.60) --
    ( 55.88, 73.00) --
    ( 55.74, 73.40) --
    ( 55.57, 73.78) --
    ( 55.38, 74.16) --
    ( 55.17, 74.52) --
    ( 54.93, 74.86) --
    ( 54.67, 75.20) --
    ( 54.40, 75.51) --
    ( 54.10, 75.81) --
    ( 53.78, 76.09) --
    ( 53.45, 76.35) --
    ( 53.10, 76.58) --
    ( 52.74, 76.80) --
    ( 52.37, 76.99) --
    ( 51.98, 77.15) --
    ( 51.59, 77.30) --
    ( 51.18, 77.41) --
    ( 50.77, 77.50) --
    ( 50.36, 77.57) --
    ( 49.94, 77.61) --
    ( 49.52, 77.62) --
    ( 49.10, 77.61) --
    ( 48.68, 77.57) --
    ( 48.26, 77.50) --
    ( 47.85, 77.41) --
    ( 47.45, 77.30) --
    ( 47.05, 77.15) --
    ( 46.67, 76.99) --
    ( 46.29, 76.80) --
    ( 45.93, 76.58) --
    ( 45.59, 76.35) --
    ( 45.25, 76.09) --
    ( 44.94, 75.81) --
    ( 44.64, 75.51) --
    ( 44.36, 75.20) --
    ( 44.10, 74.86) --
    ( 43.87, 74.52) --
    ( 43.65, 74.16) --
    ( 43.46, 73.78) --
    ( 43.30, 73.40) --
    ( 43.15, 73.00) --
    ( 43.04, 72.60) --
    ( 42.95, 72.19) --
    ( 42.88, 71.77) --
    ( 42.84, 71.35) --
    ( 42.83, 70.93) --
    ( 42.84, 70.51) --
    ( 42.88, 70.09) --
    ( 42.95, 69.68) --
    ( 43.04, 69.27) --
    ( 43.15, 68.86) --
    ( 43.30, 68.47) --
    ( 43.46, 68.08) --
    ( 43.65, 67.71) --
    ( 43.87, 67.35) --
    ( 44.10, 67.00) --
    ( 44.36, 66.67) --
    ( 44.64, 66.35) --
    ( 44.94, 66.05) --
    ( 45.25, 65.78) --
    ( 45.59, 65.52) --
    ( 45.93, 65.28) --
    ( 46.29, 65.07) --
    ( 46.67, 64.88) --
    ( 47.05, 64.71) --
    ( 47.45, 64.57) --
    ( 47.85, 64.45) --
    ( 48.26, 64.36) --
    ( 48.68, 64.29) --
    ( 49.10, 64.25) --
    ( 49.52, 64.24) --
    ( 49.94, 64.25) --
    ( 50.36, 64.29) --
    ( 50.77, 64.36) --
    ( 51.18, 64.45) --
    ( 51.59, 64.57) --
    ( 51.98, 64.71) --
    ( 52.37, 64.88) --
    ( 52.74, 65.07) --
    ( 53.10, 65.28) --
    ( 53.45, 65.52) --
    ( 53.78, 65.78) --
    ( 54.10, 66.05) --
    ( 54.40, 66.35) --
    ( 54.67, 66.67) --
    ( 54.93, 67.00) --
    ( 55.17, 67.35) --
    ( 55.38, 67.71) --
    ( 55.57, 68.08) --
    ( 55.74, 68.47) --
    ( 55.88, 68.86) --
    ( 56.00, 69.27) --
    ( 56.09, 69.68) --
    ( 56.16, 70.09) --
    ( 56.20, 70.51) --
    cycle;
    
    \node[text=drawColor,anchor=base,inner sep=0pt, outer sep=0pt, scale=  
    1.00] at 
    ( 49.52, 67.72) {\footnotesize $3_l$};
    
    \path[draw=drawColor,line width= 0.4pt,line join=round,line 
    cap=round,fill=fillColor] (256.96, 70.93) --
    (256.95, 71.35) --
    (256.91, 71.77) --
    (256.84, 72.19) --
    (256.75, 72.60) --
    (256.63, 73.00) --
    (256.49, 73.40) --
    (256.32, 73.78) --
    (256.13, 74.16) --
    (255.92, 74.52) --
    (255.68, 74.86) --
    (255.42, 75.20) --
    (255.15, 75.51) --
    (254.85, 75.81) --
    (254.53, 76.09) --
    (254.20, 76.35) --
    (253.85, 76.58) --
    (253.49, 76.80) --
    (253.12, 76.99) --
    (252.73, 77.15) --
    (252.34, 77.30) --
    (251.93, 77.41) --
    (251.52, 77.50) --
    (251.11, 77.57) --
    (250.69, 77.61) --
    (250.27, 77.62) --
    (249.85, 77.61) --
    (249.43, 77.57) --
    (249.01, 77.50) --
    (248.60, 77.41) --
    (248.20, 77.30) --
    (247.80, 77.15) --
    (247.42, 76.99) --
    (247.04, 76.80) --
    (246.68, 76.58) --
    (246.34, 76.35) --
    (246.00, 76.09) --
    (245.69, 75.81) --
    (245.39, 75.51) --
    (245.11, 75.20) --
    (244.85, 74.86) --
    (244.62, 74.52) --
    (244.40, 74.16) --
    (244.21, 73.78) --
    (244.05, 73.40) --
    (243.90, 73.00) --
    (243.79, 72.60) --
    (243.70, 72.19) --
    (243.63, 71.77) --
    (243.59, 71.35) --
    (243.58, 70.93) --
    (243.59, 70.51) --
    (243.63, 70.09) --
    (243.70, 69.68) --
    (243.79, 69.27) --
    (243.90, 68.86) --
    (244.05, 68.47) --
    (244.21, 68.08) --
    (244.40, 67.71) --
    (244.62, 67.35) --
    (244.85, 67.00) --
    (245.11, 66.67) --
    (245.39, 66.35) --
    (245.69, 66.05) --
    (246.00, 65.78) --
    (246.34, 65.52) --
    (246.68, 65.28) --
    (247.04, 65.07) --
    (247.42, 64.88) --
    (247.80, 64.71) --
    (248.20, 64.57) --
    (248.60, 64.45) --
    (249.01, 64.36) --
    (249.43, 64.29) --
    (249.85, 64.25) --
    (250.27, 64.24) --
    (250.69, 64.25) --
    (251.11, 64.29) --
    (251.52, 64.36) --
    (251.93, 64.45) --
    (252.34, 64.57) --
    (252.73, 64.71) --
    (253.12, 64.88) --
    (253.49, 65.07) --
    (253.85, 65.28) --
    (254.20, 65.52) --
    (254.53, 65.78) --
    (254.85, 66.05) --
    (255.15, 66.35) --
    (255.42, 66.67) --
    (255.68, 67.00) --
    (255.92, 67.35) --
    (256.13, 67.71) --
    (256.32, 68.08) --
    (256.49, 68.47) --
    (256.63, 68.86) --
    (256.75, 69.27) --
    (256.84, 69.68) --
    (256.91, 70.09) --
    (256.95, 70.51) --
    cycle;
    
    \node[text=drawColor,anchor=base,inner sep=0pt, outer sep=0pt, scale=  
    1.00] at 
    (250.27, 67.72) {\footnotesize $3$};
    
    \path[draw=drawColor,line width= 0.4pt,line join=round,line 
    cap=round,fill=fillColor] (323.88, 70.93) --
    (323.86, 71.35) --
    (323.82, 71.77) --
    (323.76, 72.19) --
    (323.67, 72.60) --
    (323.55, 73.00) --
    (323.41, 73.40) --
    (323.24, 73.78) --
    (323.05, 74.16) --
    (322.83, 74.52) --
    (322.60, 74.86) --
    (322.34, 75.20) --
    (322.06, 75.51) --
    (321.77, 75.81) --
    (321.45, 76.09) --
    (321.12, 76.35) --
    (320.77, 76.58) --
    (320.41, 76.80) --
    (320.03, 76.99) --
    (319.65, 77.15) --
    (319.25, 77.30) --
    (318.85, 77.41) --
    (318.44, 77.50) --
    (318.02, 77.57) --
    (317.61, 77.61) --
    (317.19, 77.62) --
    (316.76, 77.61) --
    (316.35, 77.57) --
    (315.93, 77.50) --
    (315.52, 77.41) --
    (315.12, 77.30) --
    (314.72, 77.15) --
    (314.34, 76.99) --
    (313.96, 76.80) --
    (313.60, 76.58) --
    (313.25, 76.35) --
    (312.92, 76.09) --
    (312.60, 75.81) --
    (312.31, 75.51) --
    (312.03, 75.20) --
    (311.77, 74.86) --
    (311.54, 74.52) --
    (311.32, 74.16) --
    (311.13, 73.78) --
    (310.96, 73.40) --
    (310.82, 73.00) --
    (310.70, 72.60) --
    (310.61, 72.19) --
    (310.55, 71.77) --
    (310.51, 71.35) --
    (310.49, 70.93) --
    (310.51, 70.51) --
    (310.55, 70.09) --
    (310.61, 69.68) --
    (310.70, 69.27) --
    (310.82, 68.86) --
    (310.96, 68.47) --
    (311.13, 68.08) --
    (311.32, 67.71) --
    (311.54, 67.35) --
    (311.77, 67.00) --
    (312.03, 66.67) --
    (312.31, 66.35) --
    (312.60, 66.05) --
    (312.92, 65.78) --
    (313.25, 65.52) --
    (313.60, 65.28) --
    (313.96, 65.07) --
    (314.34, 64.88) --
    (314.72, 64.71) --
    (315.12, 64.57) --
    (315.52, 64.45) --
    (315.93, 64.36) --
    (316.35, 64.29) --
    (316.76, 64.25) --
    (317.19, 64.24) --
    (317.61, 64.25) --
    (318.02, 64.29) --
    (318.44, 64.36) --
    (318.85, 64.45) --
    (319.25, 64.57) --
    (319.65, 64.71) --
    (320.03, 64.88) --
    (320.41, 65.07) --
    (320.77, 65.28) --
    (321.12, 65.52) --
    (321.45, 65.78) --
    (321.77, 66.05) --
    (322.06, 66.35) --
    (322.34, 66.67) --
    (322.60, 67.00) --
    (322.83, 67.35) --
    (323.05, 67.71) --
    (323.24, 68.08) --
    (323.41, 68.47) --
    (323.55, 68.86) --
    (323.67, 69.27) --
    (323.76, 69.68) --
    (323.82, 70.09) --
    (323.86, 70.51) --
    cycle;
    
    \node[text=drawColor,anchor=base,inner sep=0pt, outer sep=0pt, scale=  
    1.00] at 
    (317.19, 67.72) {\footnotesize $3_r$};
    
    \path[draw=drawColor,line width= 0.4pt,line join=round,line 
    cap=round,fill=fillColor] ( 36.14, 70.93) --
    ( 36.12, 71.35) --
    ( 36.08, 71.77) --
    ( 36.02, 72.19) --
    ( 35.92, 72.60) --
    ( 35.81, 73.00) --
    ( 35.67, 73.40) --
    ( 35.50, 73.78) --
    ( 35.31, 74.16) --
    ( 35.09, 74.52) --
    ( 34.86, 74.86) --
    ( 34.60, 75.20) --
    ( 34.32, 75.51) --
    ( 34.02, 75.81) --
    ( 33.71, 76.09) --
    ( 33.38, 76.35) --
    ( 33.03, 76.58) --
    ( 32.67, 76.80) --
    ( 32.29, 76.99) --
    ( 31.91, 77.15) --
    ( 31.51, 77.30) --
    ( 31.11, 77.41) --
    ( 30.70, 77.50) --
    ( 30.28, 77.57) --
    ( 29.86, 77.61) --
    ( 29.44, 77.62) --
    ( 29.02, 77.61) --
    ( 28.60, 77.57) --
    ( 28.19, 77.50) --
    ( 27.78, 77.41) --
    ( 27.38, 77.30) --
    ( 26.98, 77.15) --
    ( 26.59, 76.99) --
    ( 26.22, 76.80) --
    ( 25.86, 76.58) --
    ( 25.51, 76.35) --
    ( 25.18, 76.09) --
    ( 24.86, 75.81) --
    ( 24.57, 75.51) --
    ( 24.29, 75.20) --
    ( 24.03, 74.86) --
    ( 23.79, 74.52) --
    ( 23.58, 74.16) --
    ( 23.39, 73.78) --
    ( 23.22, 73.40) --
    ( 23.08, 73.00) --
    ( 22.96, 72.60) --
    ( 22.87, 72.19) --
    ( 22.80, 71.77) --
    ( 22.76, 71.35) --
    ( 22.75, 70.93) --
    ( 22.76, 70.51) --
    ( 22.80, 70.09) --
    ( 22.87, 69.68) --
    ( 22.96, 69.27) --
    ( 23.08, 68.86) --
    ( 23.22, 68.47) --
    ( 23.39, 68.08) --
    ( 23.58, 67.71) --
    ( 23.79, 67.35) --
    ( 24.03, 67.00) --
    ( 24.29, 66.67) --
    ( 24.57, 66.35) --
    ( 24.86, 66.05) --
    ( 25.18, 65.78) --
    ( 25.51, 65.52) --
    ( 25.86, 65.28) --
    ( 26.22, 65.07) --
    ( 26.59, 64.88) --
    ( 26.98, 64.71) --
    ( 27.38, 64.57) --
    ( 27.78, 64.45) --
    ( 28.19, 64.36) --
    ( 28.60, 64.29) --
    ( 29.02, 64.25) --
    ( 29.44, 64.24) --
    ( 29.86, 64.25) --
    ( 30.28, 64.29) --
    ( 30.70, 64.36) --
    ( 31.11, 64.45) --
    ( 31.51, 64.57) --
    ( 31.91, 64.71) --
    ( 32.29, 64.88) --
    ( 32.67, 65.07) --
    ( 33.03, 65.28) --
    ( 33.38, 65.52) --
    ( 33.71, 65.78) --
    ( 34.02, 66.05) --
    ( 34.32, 66.35) --
    ( 34.60, 66.67) --
    ( 34.86, 67.00) --
    ( 35.09, 67.35) --
    ( 35.31, 67.71) --
    ( 35.50, 68.08) --
    ( 35.67, 68.47) --
    ( 35.81, 68.86) --
    ( 35.92, 69.27) --
    ( 36.02, 69.68) --
    ( 36.08, 70.09) --
    ( 36.12, 70.51) --
    cycle;
    
    \node[text=drawColor,anchor=base,inner sep=0pt, outer sep=0pt, scale=  
    1.00] at 
    ( 29.44, 67.72) {\footnotesize $4_l$};
    
    \path[draw=drawColor,line width= 0.4pt,line join=round,line 
    cap=round,fill=fillColor] (277.04, 70.93) --
    (277.02, 71.35) --
    (276.98, 71.77) --
    (276.92, 72.19) --
    (276.82, 72.60) --
    (276.71, 73.00) --
    (276.57, 73.40) --
    (276.40, 73.78) --
    (276.21, 74.16) --
    (275.99, 74.52) --
    (275.76, 74.86) --
    (275.50, 75.20) --
    (275.22, 75.51) --
    (274.92, 75.81) --
    (274.61, 76.09) --
    (274.28, 76.35) --
    (273.93, 76.58) --
    (273.57, 76.80) --
    (273.19, 76.99) --
    (272.81, 77.15) --
    (272.41, 77.30) --
    (272.01, 77.41) --
    (271.60, 77.50) --
    (271.18, 77.57) --
    (270.76, 77.61) --
    (270.34, 77.62) --
    (269.92, 77.61) --
    (269.50, 77.57) --
    (269.09, 77.50) --
    (268.68, 77.41) --
    (268.28, 77.30) --
    (267.88, 77.15) --
    (267.49, 76.99) --
    (267.12, 76.80) --
    (266.76, 76.58) --
    (266.41, 76.35) --
    (266.08, 76.09) --
    (265.76, 75.81) --
    (265.47, 75.51) --
    (265.19, 75.20) --
    (264.93, 74.86) --
    (264.69, 74.52) --
    (264.48, 74.16) --
    (264.29, 73.78) --
    (264.12, 73.40) --
    (263.98, 73.00) --
    (263.86, 72.60) --
    (263.77, 72.19) --
    (263.70, 71.77) --
    (263.66, 71.35) --
    (263.65, 70.93) --
    (263.66, 70.51) --
    (263.70, 70.09) --
    (263.77, 69.68) --
    (263.86, 69.27) --
    (263.98, 68.86) --
    (264.12, 68.47) --
    (264.29, 68.08) --
    (264.48, 67.71) --
    (264.69, 67.35) --
    (264.93, 67.00) --
    (265.19, 66.67) --
    (265.47, 66.35) --
    (265.76, 66.05) --
    (266.08, 65.78) --
    (266.41, 65.52) --
    (266.76, 65.28) --
    (267.12, 65.07) --
    (267.49, 64.88) --
    (267.88, 64.71) --
    (268.28, 64.57) --
    (268.68, 64.45) --
    (269.09, 64.36) --
    (269.50, 64.29) --
    (269.92, 64.25) --
    (270.34, 64.24) --
    (270.76, 64.25) --
    (271.18, 64.29) --
    (271.60, 64.36) --
    (272.01, 64.45) --
    (272.41, 64.57) --
    (272.81, 64.71) --
    (273.19, 64.88) --
    (273.57, 65.07) --
    (273.93, 65.28) --
    (274.28, 65.52) --
    (274.61, 65.78) --
    (274.92, 66.05) --
    (275.22, 66.35) --
    (275.50, 66.67) --
    (275.76, 67.00) --
    (275.99, 67.35) --
    (276.21, 67.71) --
    (276.40, 68.08) --
    (276.57, 68.47) --
    (276.71, 68.86) --
    (276.82, 69.27) --
    (276.92, 69.68) --
    (276.98, 70.09) --
    (277.02, 70.51) --
    cycle;
    
    \node[text=drawColor,anchor=base,inner sep=0pt, outer sep=0pt, scale=  
    1.00] at 
    (270.34, 67.72) {\footnotesize $4$};
    
    \path[draw=drawColor,line width= 0.4pt,line join=round,line 
    cap=round,fill=fillColor] (303.80, 70.93) --
    (303.79, 71.35) --
    (303.75, 71.77) --
    (303.68, 72.19) --
    (303.59, 72.60) --
    (303.47, 73.00) --
    (303.33, 73.40) --
    (303.16, 73.78) --
    (302.97, 74.16) --
    (302.76, 74.52) --
    (302.52, 74.86) --
    (302.27, 75.20) --
    (301.99, 75.51) --
    (301.69, 75.81) --
    (301.38, 76.09) --
    (301.04, 76.35) --
    (300.70, 76.58) --
    (300.33, 76.80) --
    (299.96, 76.99) --
    (299.57, 77.15) --
    (299.18, 77.30) --
    (298.77, 77.41) --
    (298.36, 77.50) --
    (297.95, 77.57) --
    (297.53, 77.61) --
    (297.11, 77.62) --
    (296.69, 77.61) --
    (296.27, 77.57) --
    (295.86, 77.50) --
    (295.45, 77.41) --
    (295.04, 77.30) --
    (294.65, 77.15) --
    (294.26, 76.99) --
    (293.89, 76.80) --
    (293.52, 76.58) --
    (293.18, 76.35) --
    (292.84, 76.09) --
    (292.53, 75.81) --
    (292.23, 75.51) --
    (291.95, 75.20) --
    (291.70, 74.86) --
    (291.46, 74.52) --
    (291.25, 74.16) --
    (291.06, 73.78) --
    (290.89, 73.40) --
    (290.75, 73.00) --
    (290.63, 72.60) --
    (290.54, 72.19) --
    (290.47, 71.77) --
    (290.43, 71.35) --
    (290.42, 70.93) --
    (290.43, 70.51) --
    (290.47, 70.09) --
    (290.54, 69.68) --
    (290.63, 69.27) --
    (290.75, 68.86) --
    (290.89, 68.47) --
    (291.06, 68.08) --
    (291.25, 67.71) --
    (291.46, 67.35) --
    (291.70, 67.00) --
    (291.95, 66.67) --
    (292.23, 66.35) --
    (292.53, 66.05) --
    (292.84, 65.78) --
    (293.18, 65.52) --
    (293.52, 65.28) --
    (293.89, 65.07) --
    (294.26, 64.88) --
    (294.65, 64.71) --
    (295.04, 64.57) --
    (295.45, 64.45) --
    (295.86, 64.36) --
    (296.27, 64.29) --
    (296.69, 64.25) --
    (297.11, 64.24) --
    (297.53, 64.25) --
    (297.95, 64.29) --
    (298.36, 64.36) --
    (298.77, 64.45) --
    (299.18, 64.57) --
    (299.57, 64.71) --
    (299.96, 64.88) --
    (300.33, 65.07) --
    (300.70, 65.28) --
    (301.04, 65.52) --
    (301.38, 65.78) --
    (301.69, 66.05) --
    (301.99, 66.35) --
    (302.27, 66.67) --
    (302.52, 67.00) --
    (302.76, 67.35) --
    (302.97, 67.71) --
    (303.16, 68.08) --
    (303.33, 68.47) --
    (303.47, 68.86) --
    (303.59, 69.27) --
    (303.68, 69.68) --
    (303.75, 70.09) --
    (303.79, 70.51) --
    cycle;
    
    \node[text=drawColor,anchor=base,inner sep=0pt, outer sep=0pt, scale=  
    1.00] at 
    (297.11, 67.72) {\footnotesize $4_r$};
    
    \path[draw=drawColor,line width= 0.4pt,dash pattern=on 4pt off 4pt ,line 
    join=round,line cap=round] ( 16.06, 48.63) --
    ( 16.06, 93.24);
    
    \path[draw=drawColor,line width= 0.4pt,dash pattern=on 4pt off 4pt ,line 
    join=round,line cap=round] (149.89, 48.63) --
    (149.89, 93.24);
    
    \path[draw=drawColor,line width= 0.4pt,dash pattern=on 4pt off 4pt ,line 
    join=round,line cap=round] (283.73, 48.63) --
    (283.73, 93.24);
    
    \path[draw=drawColor,line width= 0.4pt,dash pattern=on 4pt off 4pt ,line 
    join=round,line cap=round] (417.56, 48.63) --
    (417.56, 93.24);
    
    \node[text=drawColor,anchor=base,inner sep=0pt, outer sep=0pt, scale=  
    1.00] at ( 16.06, 34.27) {-1};
    
    \node[text=drawColor,anchor=base,inner sep=0pt, outer sep=0pt, scale=  
    1.00] at (149.89, 34.27) {0};
    
    \node[text=drawColor,anchor=base,inner sep=0pt, outer sep=0pt, scale=  
    1.00] at (283.73, 34.27) {1};
    
    \node[text=drawColor,anchor=base,inner sep=0pt, outer sep=0pt, scale=  
    1.00] at (417.56, 34.27) {2};
    \end{scope}
    \end{tikzpicture}
    \vspace{-1cm}
    \caption{Four particles with positions in $(0,1)$ and their reflections at 
    the left and right domain boundary (labelled $_l$ and $_r$ accordingly). 
    Particle 2, for example, gets repelled by the particles
    $1,3,4,1_l,2_r,3_r,4_r$.}
    \label{fig:particles_mirror}
\end{figure*}
In our set-up every particle moves within the open interval $(0,1)$ and has
an interaction radius of size $1$. Keeping this in mind, let us briefly examine 
the two main assumptions of the evolution.
First, there exist reflections for all particles at the left and right domain 
boundary. Secondly, the particles repel each other and -- furthermore -- get 
repelled by the reflections.
However, due to the limited viewing range, only one of the two reflections of a 
certain particle is considered at any given time, namely the one which is 
closer to the reference particle (see Figure \ref{fig:particles_mirror}). A 
special 
case occurs if the reference particle is located at position 0.5: the repulsive 
forces of both of its own reflections equal out.
%
% ------------------------------------------------------------------------------
%
\subsection{Discrete Variational Model}
\label{sec:discrete_variational_model}
We propose a dynamical system which has its roots in a spatial discretisation 
of the energy functional \eref{eq:energy}. Furthermore, we make use of a 
decreasing energy function 
$\varPsi: \mathbb{R}_0^+ \to \mathbb{R}$ and a global range constraint on $u$. 
The corresponding flux function $\varPhi$ is defined as $\varPhi(s) := 
\varPsi'(s^2) s$.\\
Our goal is to describe the evolution of one-dimensional -- not necessarily 
distinct -- particle positions $v_i \in (0,1)$, where $i = 1,\ldots,N$.
Therefore, we extend the position vector $\bm{v} = 
(v_1,\ldots,v_N)\transpose$ with the additional coordinates 
$v_{N+1},\ldots,v_{2N}$ defined as $v_{2N+1-i} := 2 - v_i \in (1,2)$. This 
extended position vector $\bm{v} \in (0,2)^{2N}$ allows to evaluate the energy 
function
\begin{equation}
\label{eq:model_energy}
E(\bm{v},\bm{W}) = \frac14 \cdot
\sum \limits_{i = 1}^{2N} \sum \limits_{j = 1}^{2N}
w_{i,j} \cdot
\varPsi ( (v_j - v_i)^2 ) ,
\end{equation}
which models the repulsion potential between all positions $v_i$ and $v_j$. The 
coefficient $w_{i,j}$ denotes entry $j$ in row $i$ of a 
constant non-negative weight matrix $\bm{W} = (w_{i,j}) \in 
(\mathbb{R}_0^+)^{2N \times 
2N}$. It models the importance of the
interaction between position $v_i$ and $v_j$.
All diagonal elements of the weight matrix are positive, i.e. $w_{i,i} 
> 0, \forall i \in \{1,2,\ldots,2N\}$.
In addition, we assume that the weights for all extended positions are the same 
as those for the original ones. Namely, we have
\begin{equation}
w_{i,j} =
w_{i, \, 2N+1-j} =
w_{2N+1-i, \, j} =
w_{2N+1-i, \, 2N+1-j}
\end{equation}
for $1 \leq i,j \leq N$.\\
For the penaliser function $\varPsi$ we impose several restrictions which we 
discuss subsequently. 
\Tref{tab:class_psi} shows one reasonable class of functions $\varPsi_{a,n}$ as 
well as the corresponding diffusivities $\varPsi'_{a,n}$ and flux functions 
$\varPhi_{a,n}$. In \Fref{fig:psi_dpsi_phi} we provide an illustration of three 
functions using $a = 1$ and $n = 1,2,3$.
\begin{table}[t]
\renewcommand{\arraystretch}{1.2}
\centering
\begin{tabular}{|p{0.15\textwidth}|p{0.15\textwidth}|p{0.15\textwidth}|}
\hline
$\varPsi_{a,n}(s^2)$ & $\varPsi'_{a,n}(s^2)$ & $\varPhi_{a,n}(s)$\\
\hline
$a \cdot \big( (s - 1)^{2n} - 1 \big)$ & 
$\frac{a \cdot n}{s} \cdot (s-1)^{2n -1}$ & 
$a \cdot n \cdot (s - 1)^{2n-1}$ \\
\hline
\end{tabular}
\caption{One exemplary class of penaliser functions $\varPsi(s^2)$ for $s \in 
[0,1]$ with $n \in \mathbb{N}$, $a > 0$ and corresponding diffusivity 
$\varPsi'(s^2)$ and flux $\varPhi(s)$ functions.}
\label{tab:class_psi}
\end{table}
\begin{figure*}
    \begin{tikzpicture}
    \begin{axis}[%
    axis lines = left,
    xlabel = $s$,
    ylabel = {$\tilde{\varPsi}(s)$},
    legend entries = {$\tilde{\varPsi}_{1,1}(s)$,
        $\tilde{\varPsi}_{1,2}(s)$,
        $\tilde{\varPsi}_{1,3}(s)$},
    legend pos = north east,
    width = 15cm,
    height = 6cm]
    \addlegendimage{no markers,red}
    \addlegendimage{no markers,green}
    \addlegendimage{no markers,blue}
    %
    % line at x = 0
    \addplot [
    dashed,
    samples=50,
    black
    ] 
    coordinates {(0,-1)(0,0)};
    %
    % line at x = 1
    \addplot [
    dashed,
    samples=50,
    black
    ] 
    coordinates {(1,-1)(1,0)};
    %
    % line at x = 2
    \addplot [
    dashed,
    samples=50,
    black
    ] 
    coordinates {(2,-1)(2,0)};
    %
    % Psi_a,n
    %
    % Psi_1,1 in [-1,0]
    \addplot [
    domain=-1:0, 
    samples=100, 
    color=red,
    ]
    {(x + 1)^2 - 1};
    % Psi_1,1 in [0,2]
    \addplot [
    domain=0:2, 
    samples=100, 
    color=red,
    ]
    {(x - 1)^2 - 1};
    % Psi_1,1 in [2,3]
    \addplot [
    domain=2:3, 
    samples=100, 
    color=red,
    ]
    {(x - 3)^2 - 1};
    %
    % Psi_1,2 in [-1,0]
    \addplot [
    domain=-1:0, 
    samples=100, 
    color=green,
    ]
    {(x + 1)^4 - 1};
    % Psi_1,2 in [0,2]
    \addplot [
    domain=0:2, 
    samples=100, 
    color=green,
    ]
    {(x - 1)^4 - 1};
    % Psi_1,2 in [2,3]
    \addplot [
    domain=2:3, 
    samples=100, 
    color=green,
    ]
    {(x - 3)^4 - 1};
    %
    % Psi_1,3 in [-1,0]
    \addplot [
    domain=-1:0, 
    samples=100, 
    color=blue,
    ]
    {(x + 1)^6 - 1};
    % Psi_1,3 in [0,2]
    \addplot [
    domain=0:2, 
    samples=100, 
    color=blue,
    ]
    {(x - 1)^6 - 1};
    % Psi_1,3 in [2,3]
    \addplot [
    domain=2:3, 
    samples=100, 
    color=blue,
    ]
    {(x - 3)^6 - 1};
    \end{axis}
    \end{tikzpicture}\\[5mm]
    %
    %-------------------------------------------------------------------------------
    %
    {
        \begin{tikzpicture}
        \begin{axis}[
        axis lines = left,
        xlabel = $s$,
        ylabel = {$\tilde{\varPsi}'(s)$},
        legend entries = {$\tilde{\varPsi}'_{1,1}(s)$,
            $\tilde{\varPsi}'_{1,2}(s)$,
            $\tilde{\varPsi}'_{1,3}(s)$},
        legend pos = north east,
        width = 15cm,
        height = 8cm,
        restrict y to domain=-100:100
        ]
        \addlegendimage{no markers,red}
        \addlegendimage{no markers,green}
        \addlegendimage{no markers,blue}
        %
        % line at x = 0
        \addplot [
        dashed,
        samples=50,
        black
        ] 
        coordinates {(0,-100)(0,100)};
        %
        % line at x = 1
        \addplot [
        dashed,
        samples=50,
        black
        ] 
        coordinates {(1,-100)(1,100)};
        %
        % line at x = 2
        \addplot [
        dashed,
        samples=50,
        black
        ] 
        coordinates {(2,-100)(2,100)};
        %
        % line at y = 0
        \addplot [
        solid,
        samples=50,
        black
        ] 
        coordinates {(-1,0)(3,0)};
        %
        % Psi'_a,n
        %
        % Psi'_1,1 in [-1,0]
        \addplot [
        domain=-1:0, 
        samples=100, 
        color=red,
        ]
        {-1 - 1/x};
        % Psi'_1,1 in [0,1]
        \addplot [
        domain=0:1, 
        samples=100, 
        color=red,
        ]
        {1 - 1/x};
        % Psi'_1,1 in [1,2]
        \addplot [
        domain=1:2, 
        samples=100, 
        color=red,
        ]
        {1/(2-x) - 1};
        % Psi'_1,1 in [2,3]
        \addplot [
        domain=2:3, 
        samples=100, 
        color=red,
        ]
        {1 - 1/(x-2)};
        %
        % Psi'_1,2 in [-1,0]
        \addplot [
        domain=-1:0, 
        samples=100, 
        color=green,
        ]
        {-2 - 2/x};
        % Psi'_1,2 in [0,1]
        \addplot [
        domain=0:1, 
        samples=100, 
        color=green,
        ]
        {2 - 2/x};
        % Psi'_1,2 in [1,2]
        \addplot [
        domain=1:2, 
        samples=100, 
        color=green,
        ]
        {2/(2-x) - 2};
        % Psi'_1,2 in [2,3]
        \addplot [
        domain=2:3, 
        samples=100, 
        color=green,
        ]
        {2 - 2/(x-2)};
        %
        % Psi'_1,3 in [-1,0]
        \addplot [
        domain=-1:0, 
        samples=100, 
        color=blue,
        ]
        {-3 - 3/x};
        % Psi'_1,3 in [0,1]
        \addplot [
        domain=0:1, 
        samples=100, 
        color=blue,
        ]
        {3 - 3/x};
        % Psi'_1,3 in [1,2]
        \addplot [
        domain=1:2, 
        samples=100, 
        color=blue,
        ]
        {3/(2-x) - 3};
        % Psi'_1,3 in [2,3]
        \addplot [
        domain=2:3, 
        samples=100, 
        color=blue,
        ]
        {3 - 3/(x-2)};
        \end{axis}
        \end{tikzpicture}\\[5mm]
    }
    \begin{tikzpicture}
    \begin{axis}[
    axis lines = left,
    xlabel = $s$,
    ylabel = {$\varPhi(s)$},
    legend entries = {$\varPhi_{1,1}(s)$,
        $\varPhi_{1,2}(s)$,
        $\varPhi_{1,3}(s)$},
    legend pos = north east,
    width = 15cm,
    height = 6cm
    ]
    \addlegendimage{no markers,red}
    \addlegendimage{no markers,green}
    \addlegendimage{no markers,blue}
    %
    % line at x = 0
    \addplot [
    dashed,
    samples=50,
    black
    ] 
    coordinates {(0,-3)(0,3)};
    %
    % line at x = 1
    \addplot [
    dashed,
    samples=50,
    black
    ] 
    coordinates {(1,-3)(1,3)};
    %
    % line at x = 2
    \addplot [
    dashed,
    samples=50,
    black
    ] 
    coordinates {(2,-3)(2,3)};
    %
    % line at y = 0
    \addplot [
    solid,
    samples=50,
    black
    ] 
    coordinates {(-1,0)(3,0)};
    %
    % Phi_a,n
    %
    % Phi_1,1 in [-1,0]
    \addplot [
    domain=-1:0, 
    samples=100, 
    color=red,
    ]
    {x + 1};
    % Phi_1,1 in [0,2]
    \addplot [
    domain=0:2, 
    samples=100, 
    color=red,
    ]
    {x - 1};
    % Phi_1,1 in [2,3]
    \addplot [
    domain=2:3, 
    samples=100, 
    color=red,
    ]
    {x - 3};
    %
    % Phi_1,2 in [-1,0]
    \addplot [
    domain=-1:0, 
    samples=100, 
    color=green,
    ]
    {2 * (x + 1)^3};
    % Phi_1,2 in [0,2]
    \addplot [
    domain=0:2, 
    samples=100, 
    color=green,
    ]
    {2 * (x - 1)^3};
    % Phi_1,2 in [2,3]
    \addplot [
    domain=2:3, 
    samples=100, 
    color=green,
    ]
    {2 * (x - 3)^3};
    %
    % Phi_1,3 in [-1,0]
    \addplot [
    domain=-1:0, 
    samples=100, 
    color=blue,
    ]
    {3 * (x + 1)^5};
    % Phi_1,3 in [0,2]
    \addplot [
    domain=0:2, 
    samples=100, 
    color=blue,
    ]
    {3 * (x - 1)^5};
    % Phi_1,3 in [2,3]
    \addplot [
    domain=2:3, 
    samples=100, 
    color=blue,
    ]
    {3 * (x - 3)^5};
    
    \end{axis}
    \end{tikzpicture}
    \caption{\textbf{Top:} Exemplary penaliser functions 
    $\tilde{\varPsi}_{1,1}$, $\tilde{\varPsi}_{1,2}$, and 
    $\tilde{\varPsi}_{1,3}$
    extended to the interval $[-1,3]$ by 
    imposing \textit{symmetry} and \textit{periodicity}
    with $\tilde{\varPsi}_{a,n}(s) := \varPsi_{a,n}(s^2)$.
    \textbf{Middle:} Corresponding diffusivities $\tilde{\varPsi}'_{1,1}$, 
    $\tilde{\varPsi}'_{1,2}$, and $\tilde{\varPsi}'_{1,3}$
    with $\tilde{\varPsi}'_{a,n}(s) := \varPsi'_{a,n}(s^2)$.
    \textbf{Bottom:} Corresponding flux functions $\varPhi_{1,1}$, 
    $\varPhi_{1,2}$, and $\varPhi_{1,3}$ with 
    $\varPhi_{a,n}(s)=\varPsi'_{a,n}(s^2)s$.}
    \label{fig:psi_dpsi_phi}
\end{figure*}
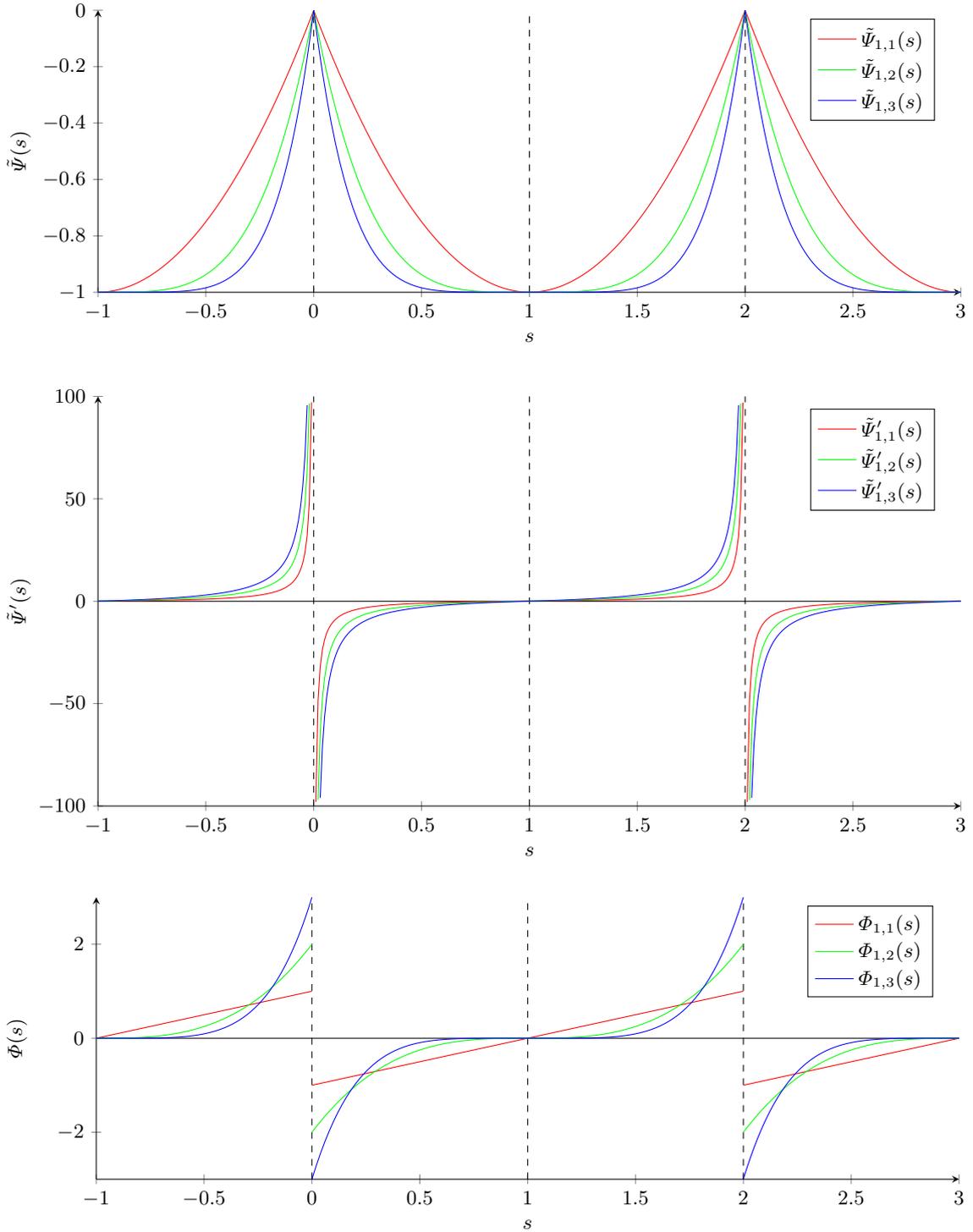
The penaliser is constructed following a three step procedure.
First, the function $\varPsi(s^2)$ is defined as a continuously 
differentiable, decreasing, and strictly convex function for $s \in [0,1]$ with 
$\varPsi(0)=0$ and
$\varPhi_-(1)=0$ (left-sided derivative).
Next, $\varPsi$ is extended to $[-1,1]$ by symmetry and to $\mathbb{R}$ by 
periodicity $\varPsi\bigl((2+s)^2\bigr)=\varPsi(s^2)$. 
This results in a penaliser $\varPsi(s^2)$ which is continuously differentiable 
everywhere except at even integers, where it is still continuous.
Note that $\varPsi(s^2)$ is increasing on $[-1,0]$ and $[1,2]$.
The flux $\varPhi$ is continuous and increasing in $(0,2)$ with
jump discontinuities at $0$ and $2$ (see \Fref{fig:psi_dpsi_phi}). 
Furthermore, we have that $\varPhi(s)=-\varPhi(-s)$ and 
$\varPhi(2+s)=\varPhi(s)$.
Exploiting the properties of $\varPsi$ allows us to rewrite 
\eref{eq:model_energy} 
without the redundant entries $v_{N+1},\ldots,v_{2N}$ as
\begin{equation}
\label{eq:model_energy_red}
%\begin{split}
E(\bm{v},\bm{W}) =
\frac12 \cdot
\sum \limits_{i = 1}^N \sum \limits_{j = 1}^N w_{i,j} \cdot
\bigg( \varPsi((v_j - v_i)^2) \, +
\varPsi((v_j + v_i)^2) \bigg) .
%\end{split}
\end{equation}
A gradient descent for \eref{eq:model_energy} is given by
\begin{equation}
\label{eq:model_gradient_descent}
%\begin{split}
\partial_t v_i = - \partial_{v_i} E(\bm{v},\bm{W})
= \sum \limits_{j \in J_1^i}
w_{i,j} \cdot \varPhi(v_j - v_i),
\quad i = 1,\ldots,2N ,
%\end{split}
\end{equation}
where $v_i$ now are functions of the time $t$ and
%$J_1^i := \{ j \in \{1,2,\ldots,2N\} \, \vert \, v_j \ne v_i \}$.
%
\begin{equation}
J_1^i := \{ j \in \{1,2,\ldots,2N\} \, \vert \, v_j \ne v_i \} .
\end{equation}
Note that for $1 \leq i,j \leq N$, thus $|v_j-v_i|<1$, 
the flux $\varPhi(v_j-v_i)$ is negative for $v_j > v_i$ and positive otherwise, 
thus driving $v_i$ always away from $v_j$. This implies that we have negative 
diffusivities $\varPsi'$ for all $|v_j-v_i| < 1$.
Due to the convexity of $\varPsi(s^2)$, the absolute values of the repulsive 
forces $\varPhi$ are decreasing with the distance between $v_i$ and $v_j$.
We remark that the jumps of $\varPhi$ at $0$ and $2$ are not problematic here,
as all positions $v_i$ and $v_j$ in the argument of $\varPhi$ are distinct by 
the definition of $J_1^i$.

Let us discuss shortly how the interval constraint for the $v_i$, 
$i=1,\ldots,N$, is enforced in \eref{eq:model_energy} and 
\eref{eq:model_gradient_descent}. First, notice that $v_{2N+1-i}$ for 
$i=1,\ldots,N$ is 
the reflection of $v_i$ on the right interval boundary $1$.
For $v_i$ and $v_{2N+1-j}$ with $1\le i,j\le N$ and $v_{2N+1-j}-v_i<1$ there is 
a repulsive force due to $\varPhi(v_{2N+1-j}-v_i)<0$ that drives $v_i$ and 
$v_{2N+1-j}$ away from the \emph{right} interval boundary. The closer $v_i$ and 
$v_{2N+1-j}$ come to this boundary, the stronger is the repulsion.
For $v_{2N+1-j}-v_i>1$, we have $\varPhi(v_{2N+1-j}-v_i)>0$.
By $\varPhi(v_{2N+1-j}-v_i) = \varPhi\bigl((2-v_j)-v_i\bigr) = 
\varPhi\bigl((-v_j)-v_i\bigr)$,
this can equally be interpreted as a repulsion between $v_i$ and $-v_j$ where 
$-v_j$ is the reflection of $v_j$ at the left interval boundary $0$.
In this case the interaction between $v_i$ and 
$v_{2N+1-j}$ drives $v_i$ and $-v_j$ away from the \emph{left} interval 
boundary.
Recapitulating both possible cases, it becomes clear that every $v_i$ is either 
repelled from the reflection of $v_j$ at the left or at the right interval 
boundary, but never from both at the same time.
As $\partial_tv_{2N+1-i}=-\partial_tv_i$ holds in 
\eref{eq:model_gradient_descent}, the symmetry of $\bm{v}$ is preserved.
Dropping the redundant entries $v_{N+1},\ldots,v_{2N}$,
\Eref{eq:model_gradient_descent} can be rewritten as
\begin{equation}
\label{eq:model_gradient_descent_redundant}
\partial_t v_i = 
\sum \limits_{j \in J_2^i}
w_{i,j} \cdot \varPhi(v_j - v_i) -
\sum \limits_{j = 1}^N w_{i,j} \cdot \varPhi(v_j + v_i) ,
\end{equation}
for $i = 1,\ldots,N$, where the second sum expresses
%emphasises
the repulsions between original and
reflected coordinates in a symmetric way.
The set $J_2^i$ is defined as
%$J_2^i := \{ j \in \{1,2,\ldots,N\} \, \vert \, v_j \ne v_i \}$.
%
\begin{equation}
J_2^i := \{ j \in \{1,2,\ldots,N\} \, \vert \, v_j \ne v_i \} .
\end{equation}
\Eref{eq:model_gradient_descent_redundant} denotes 
a formulation for pure repulsion amongst $N$ different positions $v_i$ with 
stabilisation being achieved through the consideration of their reflections at 
the domain boundary.
It is worth mentioning that within \eref{eq:model_energy_red} and 
\eref{eq:model_gradient_descent_redundant} we only make use of the first $N 
\times N$ entries of $\bm{W}$. In the following, we denote this submatrix by 
$\bm{\tilde{W}}$ and refer to its elements as $\tilde{w}_{i,j}$.
Given an initial vector $\boldsymbol{f}\in(0,1)^N$
and initialising $v_i(0)=f_i$, $v_{2N+1-i}(0)=2-f_i$ for $i=1,\ldots,N$,
the gradient descent \eref{eq:model_gradient_descent} and 
\eref{eq:model_gradient_descent_redundant}
evolves $\boldsymbol{v}$ towards a minimiser of $E$.
%
% ------------------------------------------------------------------------------
% ------------------------------------------------------------------------------
%
\section{Theory}
\label{sec:theory}
Below we provide a detailed analysis of the evolution and discuss its main 
properties. For this purpose we consider the Hessian of 
\eref{eq:model_energy_red} whose entries for $1 \leq i \leq N$ read
\begin{equation}
\label{eq:hessian_ii}
%\begin{split}
\partial_{v_iv_i} E(\bm{v},\bm{\tilde{W}}) =
\sum \limits_{\mathclap{j \in J_2^i}}
\tilde{w}_{i,j} \cdot \varPhi'(v_j-v_i)
+ \sum \limits_{j = 1}^N \tilde{w}_{i,j} \cdot \varPhi'(v_j+v_i) ,
%\end{split}
\end{equation}
\begin{equation}
\label{eq:hessian_ij}
\partial_{v_iv_j} E(\bm{v},\bm{\tilde{W}}) =
\begin{cases}
%\begin{split}
\tilde{w}_{i,j} \cdot \big( \varPhi'(v_j+v_i) \, -
\varPhi'(v_j-v_i) \big) ,
%\end{split}
 &
\forall j \in J_2^i ,\\[4mm]
\, \tilde{w}_{i,j} \cdot \varPhi'(v_j+v_i) , &
\forall j \in J_3^i ,
\end{cases}
\end{equation}
where
%$J_3^i := \{ j \in \{1,2,\ldots,N\} \, \vert \, v_i = v_j \}$.
%
\begin{equation}
J_3^i := \{ j \in \{1,2,\ldots,N\} \, \vert \, v_i = v_j \} .
\end{equation}
%
% ------------------------------------------------------------------------------
%
\subsection{General Results}
\label{sec:general_results}
In a first step, let us investigate the well-posedness of the underlying 
initial value problem in the sense of Hadamard \cite{Ha02}.
\begin{theorem}[Well-Posedness]
Let $\varPsi = \varPsi_{a,n}$ as defined in Table \ref{tab:class_psi}. Then the 
initial value problem \eqref{eq:model_gradient_descent_redundant} is well-posed 
since
\begin{enumerate}[{(}a{)}]
  \item it has a solution,
  \item the solution is unique, and
  \item it depends continuously on the initial conditions.
\end{enumerate}
\end{theorem}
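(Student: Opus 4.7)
My plan is to apply the classical Picard--Lindelöf theorem combined with an invariance argument, using the gradient-flow structure of \eref{eq:model_gradient_descent_redundant} only at the end to get continuous dependence via Gronwall. First I would restrict attention to the open subset $\Omega \subset (0,1)^N$ on which all coordinates $v_1,\ldots,v_N$ are pairwise distinct. Fixing an ordering chamber $\Omega_\sigma \subset \Omega$ corresponding to a permutation $\sigma$, every argument $v_j - v_i$ appearing in the first sum of \eref{eq:model_gradient_descent_redundant} lies in $(-1,0)\cup(0,1)$, and every argument $v_j+v_i$ in the second sum lies in $(0,2)$. Since $\varPhi_{a,n}(s)=an(s-1)^{2n-1}$ is a polynomial on each of these intervals — the only discontinuities of $\varPhi$ occur at even integers, which the chamber $\Omega_\sigma$ avoids — the right-hand side is $C^\infty$, hence locally Lipschitz, on $\Omega_\sigma$. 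Picard--Lindelöf then yields a unique local solution for every initial datum $\bm{f}\in\Omega_\sigma$.

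The second step, which I expect to be the main obstacle, is global-in-time extension: showing that the trajectory cannot leave $\Omega_\sigma$ in finite time. Two things can go wrong — a pair of particles can collide, or a particle can reach the boundary of $(0,1)$. Near a potential collision $v_j - v_i \to 0^+$ one has $\varPhi(v_j-v_i) \to -an$ and $\varPhi(v_i-v_j) \to +an$; when $\bm{\tilde W}$ is symmetric, the separation $v_j-v_i$ therefore grows at rate at least $2\tilde{w}_{i,j}\,an>0$ as soon as the gap is sufficiently small, so no collision can occur in finite time. Near the right boundary $v_i \to 1^-$, the reflected partner contributes $\varPhi(v_i+v_i)\to\varPhi(2^-)=an>0$, which via the minus sign in \eref{eq:model_gradient_descent_redundant} pushes $v_i$ back inside; the symmetric argument applies at $0$. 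These invariance statements are exactly the range and rank-order preservation properties that the later subsections will establish in detail, and invoking them yields $\bm{v}(t)\in\Omega_\sigma$ for all $t\ge 0$ and hence global existence and uniqueness.

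For continuous dependence, I would use the fact that the flow keeps trajectories in a compact sub-chamber $K\subset\Omega_\sigma$ determined by the minimum inter-particle distance and the minimum distance to $\{0,1\}$ of the initial configuration. On $K$ the Jacobian entries \eref{eq:hessian_ii}--\eref{eq:hessian_ij} are bounded, yielding a uniform Lipschitz constant $L$ for the right-hand side; then Gronwall's inequality gives
\begin{equation*}
\|\bm{v}(t)-\tilde{\bm v}(t)\|\;\le\; e^{Lt}\,\|\bm{f}-\tilde{\bm{f}}\|
\end{equation*}
for every pair of initial data $\bm{f},\tilde{\bm{f}}$ sufficiently close. A short perturbation argument, using that the separation and boundary bounds vary continuously with the initial data, shows that $\tilde{\bm v}(t)$ also remains in a slightly enlarged compact set, closing the loop.

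The delicate point throughout is that $\varPhi$ is genuinely \emph{discontinuous} at $0$, so standard ODE theory does not apply on all of $(0,1)^N$; the whole argument hinges on showing that the jump at $0$ produces a one-sided repulsion strong enough to make $\Omega_\sigma$ forward-invariant. Once this is in place, (a), (b), and (c) follow from textbook Picard--Lindelöf and Gronwall arguments applied inside the chamber.
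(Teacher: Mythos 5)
Your chamber-decomposition strategy has a genuine gap at its central step, the forward invariance of the set of pairwise distinct configurations. Taking the limit $v_j-v_i\to 0^+$ in \eqref{eq:model_gradient_descent_redundant} gives, besides the repulsion term $(\tilde{w}_{j,i}+\tilde{w}_{i,j})\,\varPhi(v_i-v_j)\to(\tilde{w}_{j,i}+\tilde{w}_{i,j})\,a\,n$, the residual contribution $\sum_{k\neq i,j}(\tilde{w}_{j,k}-\tilde{w}_{i,k})\,\varPhi(v_k-v_i)-\sum_{k}(\tilde{w}_{j,k}-\tilde{w}_{i,k})\,\varPhi(v_k+v_i)$, cf.\ \eqref{eq:thm_nonequality:00}--\eqref{eq:thm_nonequality:02}. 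This residual vanishes only when the $i$-th and $j$-th rows of $\bm{\tilde{W}}$ coincide; symmetry of $\bm{\tilde{W}}$ is not enough, and for general admissible weights (nonnegative entries, positive diagonal -- the only hypotheses of the well-posedness theorem) it can outweigh the $O(\tilde{w}_{i,j})$ repulsion, so your claimed lower bound $2\tilde{w}_{i,j}an$ on the growth of the gap, and hence the no-collision statement, does not hold. This is exactly why the paper states rank-order preservation (Theorem~\ref{thm:nonequality}) only under the constant-column assumption $\tilde{w}_{j,k}=\tilde{w}_{i,k}>0$, an assumption the well-posedness theorem does not make. In addition, the theorem covers initial data with coincident entries (the model explicitly allows ``not necessarily distinct'' positions), and such data lie in no chamber $\Omega_\sigma$ at all, so your construction provides not even local solutions for them. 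Hence (a)--(c) in the stated generality cannot be reached by this route; your boundary-avoidance and Gronwall steps are fine (they mirror Theorem~\ref{thm:boundaries} and the compactness argument), but they cannot repair the collision step.

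For contrast, the paper's proof needs no invariance argument: it writes the system as $\bm{\dot v}=-\bm{\nabla}_{\bm v}E$, treats the right-hand side for the class $\varPsi_{a,n}$ of Table~\ref{tab:class_psi} as continuously differentiable, and bounds the spectral radius of its Jacobian uniformly via Gershgorin's circle theorem, obtaining the global Lipschitz constant $L<4\,a\,n\,(2n-1)\,2^{2n-2}\max_{i}\sum_{j}\tilde{w}_{i,j}$; existence, uniqueness and continuous dependence then follow from the standard global Picard--Lindel\"of and Gronwall results quoted from Perko, with boundary avoidance and distinctness proved separately afterwards rather than as prerequisites. Your observation that the jump of $\varPhi$ at $0$ is the delicate modelling point is legitimate, but the way to a proof matching the theorem's hypotheses is a global Lipschitz estimate of the gradient field, not forward invariance of the ordered chambers.
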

\begin{proof}
The initial value problem \eqref{eq:model_gradient_descent_redundant} can be 
written as
\begin{align}
\bm{\dot{v}}(t) &
= \bm{f}(\bm{v}(t)) :=  -\bm{\nabla}_{\bm{v}} E(\bm{v}(t),\bm{W})\\
\bm{v}(0) & = \bm{v}_0
\end{align}
with $\bm{v}(t) \in \mathbb{R}^{2N}$ and $t \in \mathbb{R}_0^+$ where we 
make use of the fact that $\bm{W}$ is a constant weight matrix.\\

In case $\bm{f}(\bm{v}(t))$ is continuously differentiable and Lipschitz 
continuous all three conditions (a)--(c) hold. Existence and uniqueness 
directly 
follow from \cite[chapter 3.1, Theorem 3]{Pe01}.
Continuous dependence on the initial conditions is guaranteed due to 
\cite[chapter 2.3, Theorem 1]{Pe01} which is based on Gronwall's Lemma 
\cite{Gr19}.
Thus, let us now prove differentiability and Lipschitz continuity of 
$\bm{f}(\bm{v}(t))$.

\paragraph{Differentiability:}
Differentiability follows from the fact that all functions 
$\varPhi_{a,n}$ are continuously differentiable. As a consequence the partial 
derivatives of (8) w.r.t. $v_i$ exist for $i = 1,\ldots,2N$.

\paragraph{Lipschitz Continuity:}
The Gershgorin circle theorem \cite{Ge31} allows to estimate a valid Lipschitz 
constant $L$ as an upper bound of the spectral radius of the Jacobian of 
\eqref{eq:model_gradient_descent_redundant}.
For $1 \leq i \leq N$ the entries read
%-- consistent with \eqref{eq:hessian_ii} and \eqref{eq:hessian_ij} --
%
\begin{align}
%\nonumber
\partial_{v_i}(\partial_t v_i) = &
-\sum\limits_{j \in J_2^i}
\tilde{w}_{i,j} \cdot \big( \varPhi'(v_j-v_i) + \varPhi'(v_j+v_i) \big)%\\
%
%&
- 2 \cdot \sum \limits_{j \in J_3^i} \tilde{w}_{i,j} \cdot \varPhi'(v_j + 
v_i) ,\\
%\end{align}
%
%\begin{equation}
\partial_{v_j}(\partial_t v_i) = &
\begin{cases}
%\begin{split}
\tilde{w}_{i,j} \cdot \big( \varPhi'(v_j-v_i) - \varPhi'(v_j+v_i) \big) ,
%\end{split}
 & 
\forall j \in J_2^i ,\\[2mm]
-\tilde{w}_{i,j} \cdot \varPhi'(v_j + v_i) , & 
\forall j \in J_3^i .
\end{cases}
%\end{equation}
\end{align}
%\end{equation}\\
%
The radii of the Gershgorin discs fulfil
\begin{equation}
\begin{split}
r_i & =
\sum \limits_{\substack{j = 1\\j \neq i}}^N
\big| \partial_{v_j} (\partial_t v_i) \big|\\
&
%\begin{split}
= %&
\sum \limits_{j \in J_2^i}
\tilde{w}_{i,j} \cdot | \varPhi'(v_j - v_i) - \varPhi'(v_j + v_i) |
+ \sum \limits_{\substack{j \in J_3^i\\j \neq i}}
\tilde{w}_{i,j} \cdot |\varPhi'(v_j+v_i)|\\
%\end{split}\\
%
&
%\begin{split}
< \sum \limits_{j \in J_2^i}
\tilde{w}_{i,j} \cdot | \varPhi'(v_j - v_i) - \varPhi'(v_j + v_i) |
+ \sum \limits_{j \in J_3^i}
\tilde{w}_{i,j} \cdot |\varPhi'(v_j+v_i)|\\
%\end{split}\\
%
& =: \tilde{r}_i , \qquad i = 1,\ldots,N .
\end{split}
\end{equation}
Then we have $|\lambda_i - \partial_{v_i}(\partial_t v_i)| < \tilde{r}_i$ for 
$1 \leq i \leq N$ where $\lambda_i$ denotes the $i$-th eigenvalue of the 
Jacobian of \eqref{eq:model_gradient_descent_redundant}. This leads to the 
bounds
\begin{align}
\nonumber
\lambda_i & <
\begin{aligned}[t]
&
%\begin{aligned}[t]
\sum \limits_{j \in J_2^i} \tilde{w}_{i,j} \cdot \big(
|\varPhi'(v_j-v_i)-\varPhi'(v_j+v_i)| \, -
( \varPhi'(v_j-v_i) + \varPhi'(v_j+v_i)) \big)\\
%\end{aligned}\\[2mm]
%
& + \sum \limits_{j \in J_3^i}
\tilde{w}_{i,j} \cdot \big( |\varPhi'(v_j+v_i)| - 2 \cdot \varPhi'(v_j+v_i) 
\big)
\end{aligned}\\
& \leq
\begin{aligned}[t]
&
%\begin{aligned}[t]
 \sum \limits_{j \in J_2^i} \tilde{w}_{i,j} \cdot \big(
|\varPhi'(v_j-v_i)|+|\varPhi'(v_j+v_i)| \, +
|\varPhi'(v_j-v_i)| + |\varPhi'(v_j+v_i)| \big)\\
%\end{aligned}\\[2mm]
%
\nonumber
& +
\sum \limits_{j \in J_3^i}
\tilde{w}_{i,j} \cdot \big( |\varPhi'(v_j+v_i)| + 2 \cdot 
|\varPhi'(v_j+v_i)| \big)
\end{aligned}\\
\nonumber
& \leq
4 \cdot L_\varPhi \cdot \sum \limits_{j \in J_2^i} \tilde{w}_{i,j} +
3 \cdot L_\varPhi \cdot \sum \limits_{j \in J_3^i} \tilde{w}_{i,j}\\
& <
4 \cdot L_\varPhi \cdot \sum \limits_{j = 1}^N \tilde{w}_{i,j},
\qquad i = 1,\ldots,N,
\end{align}
where $L_{\varPhi}$ represents the Lipschitz constant of the flux function 
$\varPhi$.
Using the same reasoning one can show that
\begin{equation}
\lambda_i >
-4 \cdot L_\varPhi \cdot \sum \limits_{j= 1}^N \tilde{w}_{i,j},
\qquad i = 1,\ldots,N.
\end{equation}
Consequently, an upper bound for the spectral radius -- and thus for the 
Lipschitz constant $L$ of the gradient of 
\eqref{eq:model_gradient_descent_redundant} -- reads
\begin{equation}
\label{eq:lipschitz_max}
L \leq \max \limits_{1 \leq i \leq N}|\lambda_i| <
4 \cdot L_\varPhi \cdot \max \limits_{1 \leq i \leq N}
\sum \limits_{j = 1}^N \tilde{w}_{i,j}
=: L_{\mathrm{max}} .
\end{equation}
For our specific class of flux functions $\varPhi_{a,n}$ a valid 
Lipschitz constant $L_{\varPhi}$ is given by
\begin{equation}
L_{\varPhi} = a \cdot n \cdot (2n - 1) \cdot 2^{2n-2}
\end{equation}
such that we have
\begin{equation}
\label{eq:lipschitz_an}
L < 4 \cdot a \cdot n \cdot (2n-1) \cdot 2^{2n-2} \cdot
\max \limits_{1 \leq i \leq N}
\sum \limits_{j = 1}^N \tilde{w}_{i,j}.
\end{equation}
This concludes the proof.
\qed
\end{proof}
Next, let us show that no position $v_i$ can ever reach or cross 
the interval boundaries $0$ and $1$.
%
% ------------------------------------------------------------------------------
%
\begin{theorem}[Avoidance of Range Interval Boundaries]
\label{thm:boundaries}
For any weighting matrix $\bm{\tilde{W}} \in (\mathbb{R}_0^+)^{N \times N}$ all 
$N$ positions $v_i$ which evolve according to 
\eref{eq:model_gradient_descent_redundant} and have an arbitrary initial value 
in $(0,1)$ do not reach the domain boundaries 0 and 1 for any time $t \geq 0$.
\end{theorem}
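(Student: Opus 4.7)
The plan is to show that $m(t):=\min_{1\le i\le N} v_i(t)$ stays bounded below by a strictly positive constant for all $t\ge 0$; avoidance of the upper boundary then follows from the symmetry $v_i\mapsto 1-v_i$, under which the energy \eqref{eq:model_energy_red} is invariant, since $\tilde{\varPsi}(s):=\varPsi(s^2)$ is even and $2$-periodic. The crucial point is that, although the flux $\varPhi$ is bounded, it has a jump of size $2an$ at $0$, and this jump enters the dynamics only through the self-reflection term $-w_{i_0,i_0}\,\varPhi(2v_{i_0})$, which will provide a strictly positive ``kick'' whenever $v_{i_0}$ is close to the boundary.

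\paragraph{Key steps.}
First, I would observe that $m(t)$ is the minimum of finitely many $C^{1}$ functions, hence Lipschitz, with $\dot m(t)=\partial_t v_{i_0}(t)$ at almost every $t$, where $i_0$ denotes a current argmin. Second, I would apply \eqref{eq:model_gradient_descent_redundant} at that argmin, splitting the right-hand side into contributions from indices $j$ with $v_j=v_{i_0}$ (which includes $j=i_0$) and those with $v_j>v_{i_0}$. Because $v_{i_0}$ is the minimum, both arguments $v_j\pm v_{i_0}$ of $\varPhi$ in the cross terms lie in $(0,2)$, where $\varPhi$ is smooth and strictly increasing; the mean value theorem then bounds each cross term in absolute value by $2v_{i_0}\,\|\varPhi'\|_{\infty,(0,2)}\,w_{i_0,j}$. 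Using the explicit form of $\varPhi_{a,n}$, one checks that $-\varPhi(2v_{i_0})\ge an/2^{2n-1}$ for $v_{i_0}\le 1/4$, so the self-reflection block contributes at least $w_{i_0,i_0}\cdot an/2^{2n-1}>0$. Combining these two estimates produces constants $\delta,c>0$, depending only on $a$, $n$ and the row sums of $\tilde{\bm W}$, such that $\partial_t v_{i_0}\ge c$ whenever $v_{i_0}<\delta$. A standard comparison argument for the Lipschitz function $m(t)$ then yields $m(t)\ge\min(m(0),\delta)>0$ for every $t\ge 0$, and the symmetric argument for $M(t):=\max_i v_i(t)$ finishes the proof.

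\paragraph{Main obstacle.}
The only conceptual difficulty is correctly handling the jump of $\varPhi$ at $0$: for an arbitrary index $i$, a neighbour $v_j$ sitting just below $v_i$ would yield a term $\varPhi(v_j-v_i)$ whose argument crosses the discontinuity, making uniform lower bounds on $\partial_t v_i$ awkward. Working with the argmin $v_{i_0}$ cleanly circumvents this, because every difference $v_j-v_{i_0}$ is non-negative and therefore stays on the smooth side of the jump. The jump then enters only through the self-reflection term, which is precisely the term whose sign guarantees repulsion from the boundary.
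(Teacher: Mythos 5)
Your proposal is correct, and its engine is the same as the paper's: decompose $\partial_t v_i$ as in \eref{eq:model_gradient_descent_redundant_alternative} into the paired cross terms $\varPhi(v_j-v_i)-\varPhi(v_j+v_i)$ and the self-reflection term $-\tilde{w}_{i,i}\,\varPhi(2v_i)$, and observe that near a boundary the former vanish while the latter is strictly repulsive. The packaging differs, though. The paper simply computes $\lim_{v_i\to 0^+}\partial_t v_i>0$ and $\lim_{v_i\to 1^-}\partial_t v_i<0$ (handling the right boundary via the periodicity of $\varPhi$) and stops there; you instead work at the running minimum $m(t)$, bound each cross term quantitatively by $2v_{i_0}\sup_{(0,2)}|\varPhi'|$, bound the self term below by a positive constant, and close with a differential-inequality comparison, treating the boundary at $1$ by the symmetry $v\mapsto 1-v$ under which \eref{eq:model_energy_red} is invariant. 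This buys a genuine strengthening: a uniform barrier $m(t)\ge\min\bigl(m(0),\delta\bigr)$, whereas the paper's limits are taken with the other positions held fixed and leave the required uniformity implicit; moreover, restricting attention to the argmin is exactly what keeps all flux arguments inside $(0,2)$ and away from the jump at $0$. Two small caveats: your constants also need $\min_i\tilde{w}_{i,i}>0$ (a standing model assumption, but not captured by ``row sums'' alone), and your mean-value bound uses the specific family $\varPhi_{a,n}$, while the theorem is stated for the general penaliser class; both are easily repaired, the latter by replacing the Lipschitz estimate with uniform continuity of $\varPhi$ on $[0,2]$ and using monotonicity to get $-\varPhi(2v_{i_0})\ge-\varPhi\bigl(\tfrac12\bigr)>0$ for $v_{i_0}\le\tfrac14$.
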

\begin{proof}
\Eref{eq:model_gradient_descent_redundant} can be written as
\begin{equation}
\label{eq:model_gradient_descent_redundant_alternative}
%\begin{split}
\partial_tv_i = %&
\sum\limits_{j \in J_2^i}
\tilde{w}_{i,j} \cdot \bigg( \varPhi(v_j-v_i) - \varPhi(v_j+v_i) \bigg) \\
- \sum\limits_{j \in J_3^i}
\tilde{w}_{i,j} \cdot \varPhi(2v_i) ,
%\end{split}
\end{equation}
where $1 \leq i \leq N$.
Notice that for $j \in J_2^i$ we have
\begin{eqnarray}
\label{eq:thm_boundaries:02}
\lim\limits_{v_i \to 0^+} \varPhi(v_j-v_i) - \varPhi(v_j+v_i) & = & 
0\enspace,\\[2mm]
\label{eq:thm_boundaries:03}
\lim\limits_{v_i \to 1^-} \varPhi(v_j-v_i) - \varPhi(v_j+v_i) & = & 0\enspace,
\end{eqnarray}
where the latter follows from the periodicity of $\varPhi$.
Consequently, any position $v_i$ which gets arbitrarily close to one of the 
domain boundaries 0 or 1 experiences no impact by positions $v_j$ with $j \in 
J_2^i$, and the first sum in 
\eref{eq:model_gradient_descent_redundant_alternative} gets zero.
The definition of $\varPsi(s^2)$ implies that
\begin{align}
\varPsi'(s^2) < 0 , & \quad \forall s \in (0,1) ,\\
\varPsi'(s^2) > 0 , & \quad \forall s \in (1,2) ,
\end{align}
from which it follows for $1 \leq i \leq N$ that
\begin{align}
\label{eq:thm_boundaries:00}
- \varPhi(2v_i) > 0 , & \quad \forall v_i \in \left(0,\frac12\right) ,\\
\label{eq:thm_boundaries:01}
- \varPhi(2v_i) < 0 , & \quad \forall v_i \in \left(\frac12,1\right) .
\end{align}
Now remember that
$\bm{\tilde{W}} \in (\mathbb{R}_0^+)^{N \times N}$ and
$\tilde{w}_{i,i} > 0$. In combination with 
\eref{eq:thm_boundaries:00}
and \eref{eq:thm_boundaries:01}
we get
\begin{equation}
\lim\limits_{v_i \to 0^+} \partial_t v_i > 0
\quad \mbox{and} \quad
\lim\limits_{v_i \to 1^-} \partial_t v_i < 0\enspace,
\end{equation}
which concludes the proof.\qed
\end{proof}
%
% ------------------------------------------------------------------------------
%
Let us for a moment assume that the penaliser function is given by $\varPsi = 
\varPsi_{a,n}$ from \Tref{tab:class_psi}. Below, we prove that this implies 
convergence to the global minimum of the energy $E(\bm{v},\bm{\tilde{W}})$. 
\begin{theorem}[Convergence for $\varPsi = \varPsi_{a,n=1}$]
\label{thm:convergence_n_one}
For $t \to \infty$,
given a penaliser $\varPsi_{a,1}$ with arbitrary $a > 0$, any initial 
configuration $\bm{v} \in (0,1)^N$ converges to a unique 
steady state $\bm{v}^*$ which is the global minimiser of the energy given in 
\eref{eq:model_energy_red}.
\end{theorem}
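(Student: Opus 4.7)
The plan exploits the fact that for $n=1$ the flux $\varPhi_{a,1}(s)=a(s-1)$ on $(0,2)$ (extended antisymmetrically and $2$-periodically) has constant derivative $\varPhi'_{a,1}\equiv a$ wherever defined, so that $E(\cdot,\tilde{\bm W})$ is piecewise quadratic on $(0,1)^N$ and the gradient descent~\eref{eq:model_gradient_descent_redundant} is piecewise linear. The proof combines four ingredients: strict convexity of $E$ on each permutation cell, decoupling of the induced linear dynamics, rank-order preservation of the flow, and the boundary avoidance of Theorem~\ref{thm:boundaries}.

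First I would partition $(0,1)^N$ into the $N!$ open permutation cells $C_\sigma:=\{\bm{v}\in(0,1)^N \, : \, v_{\sigma(1)}<\ldots<v_{\sigma(N)}\}$, $\sigma\in S_N$. Inside $C_\sigma$ the signs $\operatorname{sgn}(v_j-v_i)$ are fixed, so substituting $\varPhi'_{a,1}\equiv a$ into the Hessian formulas \eref{eq:hessian_ii}--\eref{eq:hessian_ij} yields a constant \emph{diagonal} Hessian with entries $2a\sum_j\tilde{w}_{ij}-a\tilde{w}_{ii}>0$ (positive because $\tilde{w}_{ii}>0$). Consequently $E$ is a strictly convex quadratic on each $C_\sigma$, with a unique critical point $\bm{v}^*_\sigma$ that is its minimum on $\overline{C_\sigma}$. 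Expanding \eref{eq:model_gradient_descent_redundant} on $C_\sigma$, the $a(v_j-v_i)$ repulsion terms cancel against the $a(v_j+v_i)$ contributions coming from the reflected particles, leaving the \emph{decoupled} scalar linear ODEs
\begin{equation*}
\dot v_i \;=\; -2a\,\tilde{W}_i\,(v_i-v^*_{i,\sigma}),\qquad \tilde{W}_i:=\sum_{j=1}^N \tilde{w}_{ij},
\end{equation*}
where $v^*_{i,\sigma}=\tfrac{1}{2}-\tfrac{1}{2\tilde{W}_i}\sum_{j\ne i}\tilde{w}_{ij}\operatorname{sgn}(v_j-v_i)$, with the signs evaluated on $C_\sigma$. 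Each coordinate therefore decays exponentially, $v_i(t)\to v^*_{i,\sigma}$.

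Next I would prove rank-order preservation so that the previous step applies for all $t\ge 0$. Consider the pair-gap $w:=v_j-v_i$ tending to $0$ from above. Because $\varPhi$ has a jump at $0$ (left limit $+a$, right limit $-a$), the mutual $(i,j)$-interaction in \eref{eq:model_gradient_descent_redundant} contributes a strictly positive repulsive kick of size $2a\tilde{w}_{ij}$ to $\dot w$ as $w\to 0^+$; since the remaining contributions to $\dot w$ are continuous and bounded on the compact closure of $(0,1)^N$, a one-sided comparison argument keeps $w(t)>0$ throughout the evolution. Combined with Theorem~\ref{thm:boundaries}, this confines $\bm{v}(t)$ to the cell $C_\sigma$ of the initial data, yielding $\bm{v}(t)\to\bm{v}^*_\sigma=:\bm{v}^*$. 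Uniqueness of $\bm{v}^*$ follows from the unique cell-wise minimiser of the first step, and its identification as the global minimiser of $E$ on $(0,1)^N$ from the strict cell-wise convexity together with the observation that the antisymmetric part of $\tilde{\bm W}$ drops out of \eref{eq:model_energy_red} (so that $\tilde{\bm W}$ may be taken symmetric WLOG).

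The main obstacle will be making the rank-order-preservation argument rigorous. Because $\varPhi$ is discontinuous at the collision faces $\{v_i=v_j\}$, classical Cauchy--Lipschitz theory does not directly apply there, so the one-sided repulsion argument must be formulated carefully (e.g.\ through a comparison bound for $w(t)$ near $w=0^+$, or via a Filippov-style notion of solution) in a way consistent with the well-posedness result in Theorem~1. Once cell confinement is secured, the remainder of the proof is textbook linear ODE theory.
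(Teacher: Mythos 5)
Your computation that for $n=1$ the flow decouples inside a fixed order cell is correct (and consistent with \eref{eq:steady_state_linear}), as is the diagonal-Hessian observation. The genuine gap is the step you yourself flag: confinement of the trajectory to the initial permutation cell. Under the hypotheses of Theorem~\ref{thm:convergence_n_one} the weight matrix is only assumed nonnegative with positive \emph{diagonal}; off-diagonal entries may vanish and the rows of $\bm{\tilde{W}}$ may differ. The repulsive ``kick'' you invoke as $v_j-v_i\to 0^+$ has size $a\,(\tilde{w}_{i,j}+\tilde{w}_{j,i})$, which can be zero, and the remaining contributions to $\partial_t(v_j-v_i)$ do \emph{not} vanish in that limit unless the columns of $\bm{\tilde{W}}$ are constant -- which is exactly the additional hypothesis of Theorem~\ref{thm:nonequality} and is not available here. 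Concretely, take $N=3$, $\tilde{w}_{1,2}=\tilde{w}_{1,3}=\tilde{w}_{2,1}=0$, $\tilde{w}_{2,3}$ much larger than $\tilde{w}_{2,2}$, and initial data $v_1<v_2<v_3$ clustered near $0.45$: your own decoupled ODEs drive $v_2$ towards a target near $0$ while $v_1$ relaxes slowly towards $\tfrac12$, so $v_2$ overtakes $v_1$, and no kick prevents this because that pair does not interact. The trajectory leaves $C_\sigma$, the target $\bm{v}^*_\sigma$ changes, and the linear-ODE conclusion no longer applies; moreover the theorem admits initial data with repeated entries (lying in no open cell), and for general weights the cell-wise critical point need not even lie in $\overline{C_\sigma}$. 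The closing identification of the limit with ``the'' unique global minimiser is also unsupported: cell-wise strict convexity plus symmetrising $\bm{\tilde{W}}$ does not exclude distinct minimisers in different cells (for permutation-compatible weights the minimisers occur in order-permuted copies), so an extra argument is needed there in any case.

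For comparison, the paper's proof avoids order considerations entirely: it uses the Lyapunov function $V(\bm{v},\bm{\tilde{W}})=E(\bm{v},\bm{\tilde{W}})-E(\bm{v}^*,\bm{\tilde{W}})$ with $\partial_t V=-\sum_{i=1}^N\bigl(\partial_{v_i}E\bigr)^2\le 0$ as in \eref{eq:lyapunov_n_one}, combined with a Gershgorin argument showing positive definiteness of the Hessian \eref{eq:hessian_ii}--\eref{eq:hessian_ij} for $\varPsi_{a,1}$, which yields a strict minimum and asymptotic Lyapunov stability for arbitrary admissible $\bm{\tilde{W}}$ and arbitrary (possibly non-distinct) initial data. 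If you want to rescue your route, you must either import the hypotheses of Theorem~\ref{thm:nonequality} (constant, strictly positive columns) so that cell confinement actually holds, or treat cell changes and particle mergers explicitly -- at which point the Lyapunov/definiteness argument is the substantially shorter path.
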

\begin{proof}
As a sum of convex functions, \eref{eq:model_energy_red}
is convex. Therefore, the function
$V(\bm{v},\bm{\tilde{W}}) := E(\bm{v},\bm{\tilde{W}}) - 
E(\bm{v}^*,\bm{\tilde{W}})$ (where $\bm{v}^*$ 
is the equilibrium point) is a Lyapunov function with 
$V(\bm{v}^*,\bm{\tilde{W}}) = 0$ 
and $V(\bm{v},\bm{\tilde{W}}) > 0$ for all $\bm{v} \neq \bm{v}^*$.
Furthermore, we have
\begin{equation}
\label{eq:lyapunov_n_one}
\partial_t V(\bm{v},\bm{\tilde{W}}) =
-\sum\limits_{i=1}^{N} \big( \partial_{v_i} E(\bm{v},\bm{\tilde{W}}) \big)^2 
\leq 0 
\enspace.
\end{equation}
According to Gershgorin's theorem \cite{Ge31}, one can show that the Hessian 
matrix of \eref{eq:model_energy_red} is positive definite for $\varPsi = 
\varPsi_{a,1}$ 
from which it follows that $E(\bm{v},\bm{\tilde{W}})$ has a strict (global) 
minimum. This implies that the inequality in \eref{eq:lyapunov_n_one} becomes 
strict except in case of $\bm{v} = \bm{v}^*$, and guarantees asymptotic 
Lyapunov stability \cite{LY92} of $\bm{v}^*$. Thus, we have convergence to 
$\bm{v}^*$ for $t \to \infty$.\qed
\end{proof}
\begin{remark}
Theorem \ref{thm:convergence_n_one} can be extended to the case of $n=2$ and -- 
in a weaker formulation -- to arbitrary $n \in \mathbb{N}$. The proofs for 
both cases are based on a straightforward application of the Gershgorin circle 
theorem. 
For details we refer to the supplementary material.
\begin{enumerate}[{(}a{)}]
\item Given that $\varPsi = \varPsi_{a,n=2}$, let us assume that one of the 
following two conditions
\begin{itemize}
\item $v_i \neq \frac12$, or
\item there exists $j \in J_2^i$ for which $v_j \neq 1 - v_i$ and 
$\tilde{w}_{i,j} > 0$,
\end{itemize}
is fulfilled for every $i \in [1,N]$ and $t \geq 0$. Then the Hessian matrix 
of \eref{eq:model_energy_red} is 
positive definite and convergence to the strict global minimum of 
$E(\bm{v},\bm{\tilde{W}})$ follows.
\item For all penaliser functions $\varPsi = \varPsi_{a,n}$, one can show that 
the Hessian matrix of \eref{eq:model_energy_red} is positive 
se\-mi-de\-fi\-nite. This means that our method converges to a global minimum 
of $E(\bm{v},\bm{\tilde{W}})$. 
However, this minimum does not have to be unique.
\end{enumerate}
\end{remark}
In general, the steady-state solution of 
\eref{eq:model_gradient_descent_redundant} depends on the definition of the 
penaliser function $\varPsi$. Based on 
\eref{eq:model_gradient_descent_redundant_alternative}, and assuming that 
$\varPsi = \varPsi_{a,n}$, a minimiser of $E(\bm{v},\bm{\tilde{W}})$ 
necessarily fulfils the equation
\begin{equation}
\label{eq:steady_state_condition}
0 =  
\sum\limits_{j \in J_2^i}
\tilde{w}_{i,j} \cdot \big( (v_j^*-v_i^*-1)^{2n-1} - 
(v_j^*+v_i^*-1)^{2n-1} \big)
- \sum\limits_{j \in J_3^i}
\tilde{w}_{i,j} \cdot (2v_i^*-1)^{2n-1},
\end{equation}
where $i = 1,\ldots,N$.
%
% ------------------------------------------------------------------------------
%
\subsection{Global Model}
\label{sec:global_model}
If all positions $v_i$ interact with each other during the evolution, i.e. 
$\tilde{w}_{i,j} > 0$ for $1 \leq i,j \leq N$, we speak of our model as 
acting \emph{globally}.
Below, we prove the existence of weight matrices 
$\bm{\tilde{W}}$ for which distinct positions $v_i$ and $v_j$ (with $i \neq j$) 
can never become equal (assuming that the positions $v_i$, $i = 1, \ldots, N$, 
are distinct for $t = 0$). This implies that the initial 
rank-order of $v_i$ is preserved throughout the evolution.
\begin{theorem}[Distinctness of $v_i$ and $v_j$]
\label{thm:nonequality}
Among $N$ initially distinct positions $v_i \in (0,1)$ evolving according to 
\eref{eq:model_gradient_descent_redundant}, no two ever become equal if 
%$\tilde{w}_{j,i} + \tilde{w}_{i,j} > 0$ and 
$\tilde{w}_{j,k} = \tilde{w}_{i,k} > 0$ for $1 \leq i,j,k \leq N, \, i 
\neq j$.
\end{theorem}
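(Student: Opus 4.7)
The plan is to argue by contradiction. Set $t^*:=\inf\{t>0:v_a(t)=v_b(t) \text{ for some } a\neq b\}$ and assume $t^*<\infty$. By continuity, there exist indices $p,q$ with $v_p(t^*)=v_q(t^*)=:c$, and since $t^*$ is the first collision time we may assume $v_p(t)<v_q(t)$ for all $t\in[0,t^*)$. Introduce $d(t):=v_q(t)-v_p(t)$, which is positive on $[0,t^*)$ and satisfies $d(t^*)=0$. The strategy is to show that $\dot d(t)$ is bounded below by a strictly positive constant as $t\to(t^*)^-$, which directly contradicts $d\to 0^+$.

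The central computation uses \eref{eq:model_gradient_descent_redundant} together with the hypothesis $\tilde w_{p,k}=\tilde w_{q,k}=:w_k$. Forming $\dot d=\partial_tv_q-\partial_tv_p$, one isolates the direct interaction between $p$ and $q$, which, using $\varPhi(-s)=-\varPhi(s)$, contributes $-(w_p+w_q)\varPhi(d)$. Since from \Tref{tab:class_psi} and \Fref{fig:psi_dpsi_phi} we have $\lim_{s\to 0^+}\varPhi(s)=-an$, this direct term tends to $(w_p+w_q)\,an>0$. All remaining terms arise from indices $k\notin\{p,q\}$ and must be shown to be non-negative in the limit.

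For the remaining terms I will split into three cases. Indices $k$ with $v_k(t^*)\neq c$ produce vanishing contributions by continuity of $\varPhi$ at the (nonzero) limits $v_k(t^*)-c$. Indices $k$ that also collide at $c$ contribute $w_k[\varPhi(v_k-v_q)-\varPhi(v_k-v_p)]$; using the one-sided limits of $\varPhi$ at $0$, this is $+2w_k\,an>0$ when $v_p<v_k<v_q$ and vanishes when $v_k$ is outside $[v_p,v_q]$. The reflection-sum terms $\varPhi(v_k+v_q)-\varPhi(v_k+v_p)$ are harmless because $v_k+c\in(0,2)$ by \Tref{tab:class_psi} and Theorem \ref{thm:boundaries}, placing the arguments in a region where $\varPhi$ is continuous and the two values coincide in the limit. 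Combining these yields $\liminf_{t\to(t^*)^-}\dot d(t)\ge (w_p+w_q)\,an>0$, so $\dot d\ge\delta>0$ on some interval $[t^*-\eta,t^*)$, giving $d(t^*)\ge d(t^*-\eta)+\delta\eta>0$, a contradiction.

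The main obstacle I anticipate is the bookkeeping when several positions reach the common value $c$ simultaneously at $t^*$: one must keep careful track of the ordering of $v_k$ relative to $v_p$ and $v_q$ for $t<t^*$ in order to correctly identify the one-sided limits of $\varPhi$, and one must check that no extra term produces a negative limit. All other ingredients (continuity on $[0,t^*]$, boundedness of $v_i$ away from $0,1$) are available from the Well-Posedness theorem and Theorem \ref{thm:boundaries}.
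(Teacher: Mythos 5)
Your proposal is correct and follows essentially the same route as the paper: you form $\partial_t(v_q-v_p)$, isolate the direct repulsion term, whose one-sided limit is strictly positive, and use the equal-weights hypothesis to show all indirect and reflection terms vanish (or remain non-negative) in the limit. Your first-collision-time contradiction and the explicit bookkeeping for several particles meeting simultaneously simply make rigorous the paper's informal conclusion that $v_j$ starts moving away from $v_i$ once their difference is small, and in doing so cover a degenerate case that the paper's limit computation passes over silently.
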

\begin{proof}
Given $N$ distinct positions $v_i \in (0,1)$, equation
\eref{eq:model_gradient_descent_redundant} can be written as
\begin{equation}
\partial_tv_i =
\sum \limits_{\scriptstyle k = 1\atop \scriptstyle k \neq i}^N
\tilde{w}_{i,k} \cdot \varPhi(v_k-v_i)
- \sum \limits_{k = 1}^N
\tilde{w}_{i,k} \cdot \varPhi(v_k+v_i) ,
\end{equation}
for $i = 1,\ldots,N$.
We use this equation to derive the difference
\begin{equation}
\label{eq:thm_nonequality:00}
\begin{aligned}
\partial_t \left( v_j - v_i \right) = 
& \, (\tilde{w}_{j,i} + \tilde{w}_{i,j}) \cdot\varPhi(v_i - v_j)\\
& + \sum \limits_{\mathclap{\scriptstyle k = 1\atop \scriptstyle k \neq i,j}}^N
\bigg( \tilde{w}_{j,k} \cdot \varPhi(v_k - v_j) \, -
\tilde{w}_{i,k} \cdot \varPhi(v_k - v_i) \bigg)\\
& - \sum \limits_{k = 1}^N
\bigg( \tilde{w}_{j,k} \cdot \varPhi(v_k + v_j) \, -
\tilde{w}_{i,k} \cdot \varPhi(v_k + v_i) \bigg) ,
%\end{split}
\end{aligned}
\end{equation}
where $1 \leq i,j \leq N$. 
Assume w.l.o.g.\ that $v_j > v_i$ and consider \eref{eq:thm_nonequality:00}
in the limit $v_j-v_i\to0$. Then we have
\begin{equation}
\label{eq:thm_nonequality:01}
\lim\limits_{v_j-v_i\to0} (\tilde{w}_{j,i} + \tilde{w}_{i,j}) \cdot 
\varPhi(v_i - v_j) > 0 ,
\end{equation}
if $\tilde{w}_{j,i} + \tilde{w}_{i,j} > 0$,
which every global model fulfils by the assumption 
that $\tilde{w}_{i,j} > 0$ for $1 \leq i,j \leq N$.
This follows from the fact that $\varPhi(s) > 0$ for $s \in 
(-1,0)$. Furthermore, we have
\begin{align}
\nonumber
\lim\limits_{v_j-v_i\to0}
& \hphantom{-} \sum \limits_{\scriptstyle k = 1\atop \scriptstyle k \neq i,j}^N
\bigg( \tilde{w}_{j,k} \cdot \varPhi(v_k - v_j) -
\tilde{w}_{i,k} \cdot \varPhi(v_k - v_i) \bigg)\\
\nonumber
& - \sum \limits_{k = 1}^N \bigg( \tilde{w}_{j,k} \cdot \varPhi(v_k + v_j) 
- \tilde{w}_{i,k} \cdot \varPhi(v_k + v_i) \bigg)\\
\nonumber
= & \hphantom{-} 
\sum \limits_{\scriptstyle k = 1\atop \scriptstyle k \neq i,j}^N
(\tilde{w}_{j,k} - \tilde{w}_{i,k}) \cdot \varPhi(v_k - v_i) %\\
%\nonumber
%&
- \sum \limits_{k = 1}^N (\tilde{w}_{j,k} - \tilde{w}_{i,k}) \cdot 
\varPhi(v_k + v_i)\\
\label{eq:thm_nonequality:02}
= & \,\, 0 
\quad \text{if $\tilde{w}_{j,k} = \tilde{w}_{i,k}$ for $1 \leq k \leq 
N$} .
\end{align}
In conclusion, we can guarantee for global models with distinct particle 
positions that
\begin{equation}
\label{eq:thm_nonequality:03}
\lim\limits_{v_j-v_i\to0} \partial_t \left( v_j - v_i \right) > 0 ,
\end{equation}
if $\tilde{w}_{j,k} = \tilde{w}_{i,k}$ where $1 \leq i,j,k \leq N$ and $i \neq 
j$.
According to \eref{eq:thm_nonequality:03},
$v_j$ will always start moving away from $v_i$ (and 
vice versa) when the difference between both gets sufficiently small. Since the 
initial positions are distinct, it follows that $v_i \neq v_j$ for $i \neq j$ 
for all times $t$.\qed
\end{proof}
%
% ------------------------------------------------------------------------------
%

A special case occurs if all entries of the weight matrix $\bm{\tilde{W}}$ 
are set to $1$ -- i.e. 
$\bm{\tilde{W}} = \bm{1}\bm{1}\transpose$ with $\bm{1} := ( 1, \ldots, 
1)\transpose \in \mathbb{R}^N$.
For this scenario, we obtain an analytic steady-state solution which is 
independent of the penaliser $\varPsi$:
%
% ------------------------------------------------------------------------------
%
\begin{theorem}[Analytic Steady-State Solution for $\bm{\tilde{W}} = 
\mathbf{1}\mathbf{1}\transpose$]
\label{thm:solution_ones}
Under the assumption that $(v_i)$ is in increasing order, 
$\bm{\tilde{W}}=\mathbf{1}\mathbf{1}\transpose$, and that $\varPsi(s^2)$ is 
twice continuously differentiable in 
$(0,2)$ the unique minimiser of \eref{eq:model_energy} is given by 
$\boldsymbol{v}^*=(v_1^*,\ldots,v_{2N}^*)\transpose$,
$v_i^*=(i-\nicefrac12)/N$, $i=1,\ldots,2N$.
\end{theorem}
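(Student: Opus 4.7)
My plan is to prove the theorem in two stages: first verify that $\bm{v}^*$ is a critical point of $E$, and then upgrade criticality to uniqueness using the convexity of $E$ together with the symmetry that the reflection constraint breaks.

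For the critical-point step I would substitute $v_i^* = (i - \nicefrac12)/N$ into the right-hand side of \eqref{eq:model_gradient_descent}. Since the $v_i^*$ are pairwise distinct and $\tilde{w}_{i,j} \equiv 1$, the equation reduces for each $i \in \{1,\dots,2N\}$ to $\sum_{j \neq i} \varPhi\bigl((j-i)/N\bigr)$. Setting $k = j - i$ and shifting each negative index by $2N$, which is legal by the periodicity $\varPhi(s+2) = \varPhi(s)$, rewrites this sum as $\sum_{k=1}^{2N-1} \varPhi(k/N)$. Pairing $k$ with $2N-k$ and invoking the identity $\varPhi(2-s) = \varPhi(-s) = -\varPhi(s)$, which combines periodicity with the odd symmetry of $\varPhi$, cancels every pair. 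The unpaired middle term $\varPhi(N/N) = \varPhi(1)$ vanishes by the construction of $\varPhi$. Hence $\partial_t v_i^* = 0$ for every $i$, so $\bm{v}^*$ is a critical point.

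For uniqueness I would proceed in two sub-steps. The reduced energy \eqref{eq:model_energy_red} is convex on $(0,1)^N$: the Gershgorin argument from the Remark after Theorem \ref{thm:convergence_n_one} gives a positive semidefinite Hessian under the present $C^2$ hypothesis on $\varPsi(s^2)$, so every critical point is a global minimiser. To pass from ``a minimiser'' to ``the minimiser'' I would exploit the circle picture. The pairwise potential $\tilde{\varPsi}(s) := \varPsi(s^2)$ is $2$-periodic, and on the quotient $\mathbb{R}/2\mathbb{Z}$ the induced pairwise interaction is a strictly convex strictly decreasing function of the $[0,1]$-valued circular distance. For such a potential it is classical that the unique minimiser of the total pairwise repulsive energy of $2N$ points on the circle is the equidistant configuration, unique up to rotation. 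The reflection constraint $v_{2N+1-i} = 2 - v_i$ is preserved only by the identity rotation, and it therefore picks out the specific equidistant configuration $v_i^* = (i-\nicefrac12)/N$.

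The main obstacle is this last step: the Gershgorin bound only yields positive semidefiniteness, not positive definiteness, so a purely local convexity argument around $\bm{v}^*$ cannot close the proof. For higher-order penalisers one genuinely has $\varPhi'(1) = 0$, and the arguments $v_i^* + v_j^* = 1$ do occur whenever $i + j = N+1$, so individual Hessian entries at $\bm{v}^*$ really can vanish. The symmetry/circle viewpoint sidesteps this pitfall by exploiting the global invariance structure of the energy together with the rigidity provided by the reflection constraint, rather than the local spectrum of the Hessian.
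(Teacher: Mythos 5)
Your proposal is correct in substance, but for the uniqueness part it takes a genuinely different route from the paper. Your first stage is a clean, explicit version of what the paper dismisses as ``straightforward, albeit lengthy calculations'': the shift of negative indices by $2N$, the pairing $k\leftrightarrow 2N-k$ via $\varPhi(2-s)=\varPhi(-s)=-\varPhi(s)$, and $\varPhi(1)=0$ indeed give $\bm{\nabla}E(\bm{v}^*)=0$. For uniqueness the paper stays local: it computes $\mathrm{D}^2E(\bm{v}^*)$ explicitly as $\sum_k\bm{A}_k\,\varPhi'(\nicefrac{k}{N})$ with Toeplitz/Hankel matrices $\bm{A}_k$, gets positive semidefiniteness of each $\bm{A}_k$ from Gershgorin, and then closes the argument by observing that the tridiagonal $\bm{A}_1$ has full rank and $\varPhi'(\nicefrac1N)>0$, so the Hessian at $\bm{v}^*$ is positive \emph{definite} -- precisely the local closure you claim is impossible; the vanishing of $\varPhi'(1)$ for higher-order penalisers only affects the $k=N$ term and is harmless. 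Your alternative -- reading $E$ (all weights one, $\varPsi(0)=0$) as half the pairwise energy of $2N$ points on $\mathbb{R}/2\mathbb{Z}$ and invoking the equal-spacing extremal property of strictly convex, decreasing potentials of circular distance -- buys global uniqueness without any Hessian computation and even without the $C^2$ hypothesis, at the price of an external lemma the paper does not need.

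Two points need tightening. First, do not merely cite the circle lemma: prove it, e.g.\ by Jensen applied to the cyclic gap sums $\sum_i\ell_{i,i+k}=2k$ together with $d_{i,i+k}\le\ell_{i,i+k}$ and monotonicity; strict convexity then forces all consecutive gaps to equal $\nicefrac1N$ in the equality case. Second, your claim that only the identity rotation respects the reflection constraint is not literally true: rotating the equidistant set by half a spacing, $\nicefrac{1}{2N}$, also yields a reflection-symmetric point set; it must be excluded explicitly because it places points at $0$ and $1$, incompatible with $v_i\in(0,1)$ and $v_{2N+1-i}=2-v_i\in(1,2)$. Finally, your auxiliary convexity claim should be restricted to the ordered cone $v_1<\cdots<v_N$ (across hyperplanes $v_i=v_j$ the term $\varPsi((v_j-v_i)^2)$ has a concave corner), but since the circle argument carries the whole proof, that step is redundant anyway.
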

\begin{proof}
With $\bm{\tilde{W}}=\mathbf{1}\mathbf{1}\transpose$,  
\Eref{eq:model_energy} can be 
rewritten without the redundant entries of $\bm{v}$ as
\begin{equation}
\label{eq:model_energy_redundant}
%\begin{split}
E(\bm{v}) = 
\sum \limits_{i = 1}^{N - 1} \sum \limits_{j = i + 1}^N
\varPsi ( (v_j - v_i)^2 ) + \frac12 \cdot \sum \limits_{i = 1}^N \varPsi( 4 
v_i^2 )
+ \sum \limits_{i = 1}^{N - 1} \sum \limits_{j = i + 1}^N 
\varPsi ( (v_j + v_i)^2 ) .
%\end{split}
\end{equation}
%
%from which
From this, one can verify by straightforward, albeit lengthy calculations
that $\bm{\nabla}E(\bm{v}^*)=0$.
Moreover, one finds that the Hessian of $E$ at $\bm{v}^*$ is
\begin{equation}
\label{eq:hesse_energy}
\mathrm{D}^2E(\bm{v}^*) =
\sum \limits_{k=1}^{N} \bm{A}_k \varPhi' \left(\frac{k}{N}\right) .
\end{equation}
Here, $\bm{A}_k$ are sparse symmetric $N\times N$-matrices given by
\begin{align}
\label{eq:hesse_energy_coeff}
\bm{A}_k &
= 2 \bm{I} - \bm{T}_k - \bm{T}_{-k} + \bm{H}_{k+1} + \bm{H}_{2N-k+1} ,\\
\bm{A}_N & = \bm{I} + \bm{H}_{N+1} ,
\end{align}
for $k=1,\ldots,N-1$,
where the unit matrix $\bm{I}$, the single-diagonal Toeplitz matrices 
$\bm{T}_k$,  
and the single-an\-ti\-dia\-gonal Hankel matrices $\bm{H}_k$ are defined as
\begin{align}
\bm{I} & = \bigl(\delta_{i,j}\bigr)_{i,j=1}^N \enspace ,\\
\bm{T}_k & = \bigl(\delta_{j-i,k}\bigr)_{i,j=1}^N \enspace ,\\
\bm{H}_k & = \bigl(\delta_{i+j,k}\bigr)_{i,j=1}^N \enspace .
\end{align}
Here, $\delta_{i,j}$ denotes the Kronecker symbol, $\delta_{i,j}=1$ if
$i=j$, and $\delta_{i,j}=0$ otherwise. All $\bm{A}_k$, $k=1,\ldots,N$ are weakly
diagonally dominant with positive diagonal, thus positive semidefinite
by Gershgorin's Theorem. Moreover, the tridiagonal matrix $\bm{A}_1$ 
is of full rank, thus even positive definite. By strict convexity of 
$\varPsi(s^2)$, all $\varPhi'(\nicefrac{k}{N})$ are positive, thus 
$\mathrm{D}^2E(\bm{v}^*)$ is positive definite.
\par
As a consequence, the steady state of the gradient descent 
\eref{eq:model_gradient_descent_redundant} for any initial data $\bm{f}$ (with 
arbitrary rank-order) can -- under the condition that $\bm{\tilde{W}} = 
\bm{1}\bm{1}\transpose$ -- be computed directly by sorting the $f_i$:
Let $\sigma$ be the permutation of $\{1,\ldots,N\}$ for which 
$(f_{\sigma^{-1}(i)})_{i=1,\ldots,N}$ is increasing (this is what a sorting
algorithm computes), the steady state is given by
$v_i^*=(\sigma(i)-\nicefrac12)/N$ for $i=1,\ldots,N$ (cf. 
\Fref{fig:swarm-simple}).\qed
\end{proof}
\begin{figure*}[t]
    \centering
    \begin{tikzpicture}[x=1.2pt,y=1.2pt]
    \definecolor{fillColor}{RGB}{255,255,255}
    \path[use as bounding box,fill=fillColor,fill opacity=0.00] (0,0) rectangle 
    (325.21, 57.82);
    \begin{scope}
    \path[clip] (  0.00,  0.00) rectangle (325.21, 57.82);
    \definecolor{drawColor}{RGB}{0,0,0}
    
    \path[draw=drawColor,line width= 0.4pt,line join=round,line cap=round] (  
    0.00, 37.83) -- (325.21, 37.83);
    \definecolor{fillColor}{RGB}{255,255,255}
    
    \path[draw=drawColor,line width= 0.4pt,line join=round,line 
    cap=round,fill=fillColor] ( 47.51, 37.83) --
    ( 47.50, 38.21) --
    ( 47.46, 38.59) --
    ( 47.40, 38.96) --
    ( 47.32, 39.33) --
    ( 47.21, 39.69) --
    ( 47.09, 40.05) --
    ( 46.94, 40.39) --
    ( 46.76, 40.73) --
    ( 46.57, 41.06) --
    ( 46.36, 41.37) --
    ( 46.13, 41.67) --
    ( 45.88, 41.95) --
    ( 45.61, 42.22) --
    ( 45.33, 42.47) --
    ( 45.03, 42.70) --
    ( 44.71, 42.92) --
    ( 44.39, 43.11) --
    ( 44.05, 43.28) --
    ( 43.70, 43.43) --
    ( 43.35, 43.56) --
    ( 42.98, 43.66) --
    ( 42.61, 43.75) --
    ( 42.24, 43.81) --
    ( 41.86, 43.84) --
    ( 41.49, 43.85) --
    ( 41.11, 43.84) --
    ( 40.73, 43.81) --
    ( 40.36, 43.75) --
    ( 39.99, 43.66) --
    ( 39.63, 43.56) --
    ( 39.27, 43.43) --
    ( 38.92, 43.28) --
    ( 38.58, 43.11) --
    ( 38.26, 42.92) --
    ( 37.95, 42.70) --
    ( 37.65, 42.47) --
    ( 37.36, 42.22) --
    ( 37.10, 41.95) --
    ( 36.85, 41.67) --
    ( 36.61, 41.37) --
    ( 36.40, 41.06) --
    ( 36.21, 40.73) --
    ( 36.04, 40.39) --
    ( 35.89, 40.05) --
    ( 35.76, 39.69) --
    ( 35.65, 39.33) --
    ( 35.57, 38.96) --
    ( 35.51, 38.59) --
    ( 35.48, 38.21) --
    ( 35.46, 37.83) --
    ( 35.48, 37.45) --
    ( 35.51, 37.08) --
    ( 35.57, 36.70) --
    ( 35.65, 36.33) --
    ( 35.76, 35.97) --
    ( 35.89, 35.61) --
    ( 36.04, 35.27) --
    ( 36.21, 34.93) --
    ( 36.40, 34.60) --
    ( 36.61, 34.29) --
    ( 36.85, 33.99) --
    ( 37.10, 33.71) --
    ( 37.36, 33.44) --
    ( 37.65, 33.19) --
    ( 37.95, 32.96) --
    ( 38.26, 32.75) --
    ( 38.58, 32.55) --
    ( 38.92, 32.38) --
    ( 39.27, 32.23) --
    ( 39.63, 32.10) --
    ( 39.99, 32.00) --
    ( 40.36, 31.91) --
    ( 40.73, 31.86) --
    ( 41.11, 31.82) --
    ( 41.49, 31.81) --
    ( 41.86, 31.82) --
    ( 42.24, 31.86) --
    ( 42.61, 31.91) --
    ( 42.98, 32.00) --
    ( 43.35, 32.10) --
    ( 43.70, 32.23) --
    ( 44.05, 32.38) --
    ( 44.39, 32.55) --
    ( 44.71, 32.75) --
    ( 45.03, 32.96) --
    ( 45.33, 33.19) --
    ( 45.61, 33.44) --
    ( 45.88, 33.71) --
    ( 46.13, 33.99) --
    ( 46.36, 34.29) --
    ( 46.57, 34.60) --
    ( 46.76, 34.93) --
    ( 46.94, 35.27) --
    ( 47.09, 35.61) --
    ( 47.21, 35.97) --
    ( 47.32, 36.33) --
    ( 47.40, 36.70) --
    ( 47.46, 37.08) --
    ( 47.50, 37.45) --
    cycle;
    
    \node[text=drawColor,anchor=base,inner sep=0pt, outer sep=0pt, scale=  
    1.00] at ( 41.49, 34.62) {1};
    
    \path[draw=drawColor,line width= 0.4pt,line join=round,line 
    cap=round,fill=fillColor] ( 84.66, 37.83) --
    ( 84.64, 38.21) --
    ( 84.61, 38.59) --
    ( 84.55, 38.96) --
    ( 84.47, 39.33) --
    ( 84.36, 39.69) --
    ( 84.23, 40.05) --
    ( 84.08, 40.39) --
    ( 83.91, 40.73) --
    ( 83.72, 41.06) --
    ( 83.51, 41.37) --
    ( 83.27, 41.67) --
    ( 83.02, 41.95) --
    ( 82.76, 42.22) --
    ( 82.47, 42.47) --
    ( 82.17, 42.70) --
    ( 81.86, 42.92) --
    ( 81.53, 43.11) --
    ( 81.20, 43.28) --
    ( 80.85, 43.43) --
    ( 80.49, 43.56) --
    ( 80.13, 43.66) --
    ( 79.76, 43.75) --
    ( 79.39, 43.81) --
    ( 79.01, 43.84) --
    ( 78.63, 43.85) --
    ( 78.26, 43.84) --
    ( 77.88, 43.81) --
    ( 77.51, 43.75) --
    ( 77.14, 43.66) --
    ( 76.77, 43.56) --
    ( 76.42, 43.43) --
    ( 76.07, 43.28) --
    ( 75.73, 43.11) --
    ( 75.41, 42.92) --
    ( 75.09, 42.70) --
    ( 74.79, 42.47) --
    ( 74.51, 42.22) --
    ( 74.24, 41.95) --
    ( 73.99, 41.67) --
    ( 73.76, 41.37) --
    ( 73.55, 41.06) --
    ( 73.36, 40.73) --
    ( 73.18, 40.39) --
    ( 73.03, 40.05) --
    ( 72.91, 39.69) --
    ( 72.80, 39.33) --
    ( 72.72, 38.96) --
    ( 72.66, 38.59) --
    ( 72.62, 38.21) --
    ( 72.61, 37.83) --
    ( 72.62, 37.45) --
    ( 72.66, 37.08) --
    ( 72.72, 36.70) --
    ( 72.80, 36.33) --
    ( 72.91, 35.97) --
    ( 73.03, 35.61) --
    ( 73.18, 35.27) --
    ( 73.36, 34.93) --
    ( 73.55, 34.60) --
    ( 73.76, 34.29) --
    ( 73.99, 33.99) --
    ( 74.24, 33.71) --
    ( 74.51, 33.44) --
    ( 74.79, 33.19) --
    ( 75.09, 32.96) --
    ( 75.41, 32.75) --
    ( 75.73, 32.55) --
    ( 76.07, 32.38) --
    ( 76.42, 32.23) --
    ( 76.77, 32.10) --
    ( 77.14, 32.00) --
    ( 77.51, 31.91) --
    ( 77.88, 31.86) --
    ( 78.26, 31.82) --
    ( 78.63, 31.81) --
    ( 79.01, 31.82) --
    ( 79.39, 31.86) --
    ( 79.76, 31.91) --
    ( 80.13, 32.00) --
    ( 80.49, 32.10) --
    ( 80.85, 32.23) --
    ( 81.20, 32.38) --
    ( 81.53, 32.55) --
    ( 81.86, 32.75) --
    ( 82.17, 32.96) --
    ( 82.47, 33.19) --
    ( 82.76, 33.44) --
    ( 83.02, 33.71) --
    ( 83.27, 33.99) --
    ( 83.51, 34.29) --
    ( 83.72, 34.60) --
    ( 83.91, 34.93) --
    ( 84.08, 35.27) --
    ( 84.23, 35.61) --
    ( 84.36, 35.97) --
    ( 84.47, 36.33) --
    ( 84.55, 36.70) --
    ( 84.61, 37.08) --
    ( 84.64, 37.45) --
    cycle;
    
    \node[text=drawColor,anchor=base,inner sep=0pt, outer sep=0pt, scale=  
    1.00] at ( 78.63, 34.62) {2};
    
    \path[draw=drawColor,line width= 0.4pt,line join=round,line 
    cap=round,fill=fillColor] (123.50, 37.83) --
    (123.49, 38.21) --
    (123.45, 38.59) --
    (123.39, 38.96) --
    (123.31, 39.33) --
    (123.20, 39.69) --
    (123.08, 40.05) --
    (122.93, 40.39) --
    (122.75, 40.73) --
    (122.56, 41.06) --
    (122.35, 41.37) --
    (122.12, 41.67) --
    (121.87, 41.95) --
    (121.60, 42.22) --
    (121.32, 42.47) --
    (121.02, 42.70) --
    (120.70, 42.92) --
    (120.38, 43.11) --
    (120.04, 43.28) --
    (119.69, 43.43) --
    (119.34, 43.56) --
    (118.97, 43.66) --
    (118.61, 43.75) --
    (118.23, 43.81) --
    (117.86, 43.84) --
    (117.48, 43.85) --
    (117.10, 43.84) --
    (116.72, 43.81) --
    (116.35, 43.75) --
    (115.98, 43.66) --
    (115.62, 43.56) --
    (115.26, 43.43) --
    (114.91, 43.28) --
    (114.58, 43.11) --
    (114.25, 42.92) --
    (113.94, 42.70) --
    (113.64, 42.47) --
    (113.35, 42.22) --
    (113.09, 41.95) --
    (112.84, 41.67) --
    (112.60, 41.37) --
    (112.39, 41.06) --
    (112.20, 40.73) --
    (112.03, 40.39) --
    (111.88, 40.05) --
    (111.75, 39.69) --
    (111.64, 39.33) --
    (111.56, 38.96) --
    (111.50, 38.59) --
    (111.47, 38.21) --
    (111.45, 37.83) --
    (111.47, 37.45) --
    (111.50, 37.08) --
    (111.56, 36.70) --
    (111.64, 36.33) --
    (111.75, 35.97) --
    (111.88, 35.61) --
    (112.03, 35.27) --
    (112.20, 34.93) --
    (112.39, 34.60) --
    (112.60, 34.29) --
    (112.84, 33.99) --
    (113.09, 33.71) --
    (113.35, 33.44) --
    (113.64, 33.19) --
    (113.94, 32.96) --
    (114.25, 32.75) --
    (114.58, 32.55) --
    (114.91, 32.38) --
    (115.26, 32.23) --
    (115.62, 32.10) --
    (115.98, 32.00) --
    (116.35, 31.91) --
    (116.72, 31.86) --
    (117.10, 31.82) --
    (117.48, 31.81) --
    (117.86, 31.82) --
    (118.23, 31.86) --
    (118.61, 31.91) --
    (118.97, 32.00) --
    (119.34, 32.10) --
    (119.69, 32.23) --
    (120.04, 32.38) --
    (120.38, 32.55) --
    (120.70, 32.75) --
    (121.02, 32.96) --
    (121.32, 33.19) --
    (121.60, 33.44) --
    (121.87, 33.71) --
    (122.12, 33.99) --
    (122.35, 34.29) --
    (122.56, 34.60) --
    (122.75, 34.93) --
    (122.93, 35.27) --
    (123.08, 35.61) --
    (123.20, 35.97) --
    (123.31, 36.33) --
    (123.39, 36.70) --
    (123.45, 37.08) --
    (123.49, 37.45) --
    cycle;
    
    \node[text=drawColor,anchor=base,inner sep=0pt, outer sep=0pt, scale=  
    1.00] at (117.48, 34.62) {3};
    
    \path[draw=drawColor,line width= 0.4pt,line join=round,line 
    cap=round,fill=fillColor] (190.53, 37.83) --
    (190.52, 38.21) --
    (190.48, 38.59) --
    (190.43, 38.96) --
    (190.34, 39.33) --
    (190.24, 39.69) --
    (190.11, 40.05) --
    (189.96, 40.39) --
    (189.79, 40.73) --
    (189.59, 41.06) --
    (189.38, 41.37) --
    (189.15, 41.67) --
    (188.90, 41.95) --
    (188.63, 42.22) --
    (188.35, 42.47) --
    (188.05, 42.70) --
    (187.74, 42.92) --
    (187.41, 43.11) --
    (187.07, 43.28) --
    (186.73, 43.43) --
    (186.37, 43.56) --
    (186.01, 43.66) --
    (185.64, 43.75) --
    (185.26, 43.81) --
    (184.89, 43.84) --
    (184.51, 43.85) --
    (184.13, 43.84) --
    (183.75, 43.81) --
    (183.38, 43.75) --
    (183.01, 43.66) --
    (182.65, 43.56) --
    (182.29, 43.43) --
    (181.95, 43.28) --
    (181.61, 43.11) --
    (181.28, 42.92) --
    (180.97, 42.70) --
    (180.67, 42.47) --
    (180.39, 42.22) --
    (180.12, 41.95) --
    (179.87, 41.67) --
    (179.64, 41.37) --
    (179.42, 41.06) --
    (179.23, 40.73) --
    (179.06, 40.39) --
    (178.91, 40.05) --
    (178.78, 39.69) --
    (178.68, 39.33) --
    (178.59, 38.96) --
    (178.53, 38.59) --
    (178.50, 38.21) --
    (178.49, 37.83) --
    (178.50, 37.45) --
    (178.53, 37.08) --
    (178.59, 36.70) --
    (178.68, 36.33) --
    (178.78, 35.97) --
    (178.91, 35.61) --
    (179.06, 35.27) --
    (179.23, 34.93) --
    (179.42, 34.60) --
    (179.64, 34.29) --
    (179.87, 33.99) --
    (180.12, 33.71) --
    (180.39, 33.44) --
    (180.67, 33.19) --
    (180.97, 32.96) --
    (181.28, 32.75) --
    (181.61, 32.55) --
    (181.95, 32.38) --
    (182.29, 32.23) --
    (182.65, 32.10) --
    (183.01, 32.00) --
    (183.38, 31.91) --
    (183.75, 31.86) --
    (184.13, 31.82) --
    (184.51, 31.81) --
    (184.89, 31.82) --
    (185.26, 31.86) --
    (185.64, 31.91) --
    (186.01, 32.00) --
    (186.37, 32.10) --
    (186.73, 32.23) --
    (187.07, 32.38) --
    (187.41, 32.55) --
    (187.74, 32.75) --
    (188.05, 32.96) --
    (188.35, 33.19) --
    (188.63, 33.44) --
    (188.90, 33.71) --
    (189.15, 33.99) --
    (189.38, 34.29) --
    (189.59, 34.60) --
    (189.79, 34.93) --
    (189.96, 35.27) --
    (190.11, 35.61) --
    (190.24, 35.97) --
    (190.34, 36.33) --
    (190.43, 36.70) --
    (190.48, 37.08) --
    (190.52, 37.45) --
    cycle;
    
    \node[text=drawColor,anchor=base,inner sep=0pt, outer sep=0pt, scale=  
    1.00] at (184.51, 34.62) {4};
    
    \path[draw=drawColor,line width= 0.4pt,line join=round,line 
    cap=round,fill=fillColor] (209.53, 37.83) --
    (209.51, 38.21) --
    (209.48, 38.59) --
    (209.42, 38.96) --
    (209.34, 39.33) --
    (209.23, 39.69) --
    (209.10, 40.05) --
    (208.95, 40.39) --
    (208.78, 40.73) --
    (208.59, 41.06) --
    (208.38, 41.37) --
    (208.14, 41.67) --
    (207.89, 41.95) --
    (207.63, 42.22) --
    (207.34, 42.47) --
    (207.04, 42.70) --
    (206.73, 42.92) --
    (206.40, 43.11) --
    (206.07, 43.28) --
    (205.72, 43.43) --
    (205.36, 43.56) --
    (205.00, 43.66) --
    (204.63, 43.75) --
    (204.26, 43.81) --
    (203.88, 43.84) --
    (203.50, 43.85) --
    (203.13, 43.84) --
    (202.75, 43.81) --
    (202.37, 43.75) --
    (202.01, 43.66) --
    (201.64, 43.56) --
    (201.29, 43.43) --
    (200.94, 43.28) --
    (200.60, 43.11) --
    (200.28, 42.92) --
    (199.96, 42.70) --
    (199.66, 42.47) --
    (199.38, 42.22) --
    (199.11, 41.95) --
    (198.86, 41.67) --
    (198.63, 41.37) --
    (198.42, 41.06) --
    (198.23, 40.73) --
    (198.05, 40.39) --
    (197.90, 40.05) --
    (197.78, 39.69) --
    (197.67, 39.33) --
    (197.59, 38.96) --
    (197.53, 38.59) --
    (197.49, 38.21) --
    (197.48, 37.83) --
    (197.49, 37.45) --
    (197.53, 37.08) --
    (197.59, 36.70) --
    (197.67, 36.33) --
    (197.78, 35.97) --
    (197.90, 35.61) --
    (198.05, 35.27) --
    (198.23, 34.93) --
    (198.42, 34.60) --
    (198.63, 34.29) --
    (198.86, 33.99) --
    (199.11, 33.71) --
    (199.38, 33.44) --
    (199.66, 33.19) --
    (199.96, 32.96) --
    (200.28, 32.75) --
    (200.60, 32.55) --
    (200.94, 32.38) --
    (201.29, 32.23) --
    (201.64, 32.10) --
    (202.01, 32.00) --
    (202.37, 31.91) --
    (202.75, 31.86) --
    (203.13, 31.82) --
    (203.50, 31.81) --
    (203.88, 31.82) --
    (204.26, 31.86) --
    (204.63, 31.91) --
    (205.00, 32.00) --
    (205.36, 32.10) --
    (205.72, 32.23) --
    (206.07, 32.38) --
    (206.40, 32.55) --
    (206.73, 32.75) --
    (207.04, 32.96) --
    (207.34, 33.19) --
    (207.63, 33.44) --
    (207.89, 33.71) --
    (208.14, 33.99) --
    (208.38, 34.29) --
    (208.59, 34.60) --
    (208.78, 34.93) --
    (208.95, 35.27) --
    (209.10, 35.61) --
    (209.23, 35.97) --
    (209.34, 36.33) --
    (209.42, 36.70) --
    (209.48, 37.08) --
    (209.51, 37.45) --
    cycle;
    
    \node[text=drawColor,anchor=base,inner sep=0pt, outer sep=0pt, scale=  
    1.00] at (203.50, 34.62) {5};
    
    \path[draw=drawColor,line width= 0.4pt,line join=round,line 
    cap=round,fill=fillColor] (229.29, 37.83) --
    (229.27, 38.21) --
    (229.24, 38.59) --
    (229.18, 38.96) --
    (229.10, 39.33) --
    (228.99, 39.69) --
    (228.86, 40.05) --
    (228.71, 40.39) --
    (228.54, 40.73) --
    (228.35, 41.06) --
    (228.14, 41.37) --
    (227.90, 41.67) --
    (227.65, 41.95) --
    (227.39, 42.22) --
    (227.10, 42.47) --
    (226.80, 42.70) --
    (226.49, 42.92) --
    (226.17, 43.11) --
    (225.83, 43.28) --
    (225.48, 43.43) --
    (225.13, 43.56) --
    (224.76, 43.66) --
    (224.39, 43.75) --
    (224.02, 43.81) --
    (223.64, 43.84) --
    (223.26, 43.85) --
    (222.89, 43.84) --
    (222.51, 43.81) --
    (222.14, 43.75) --
    (221.77, 43.66) --
    (221.40, 43.56) --
    (221.05, 43.43) --
    (220.70, 43.28) --
    (220.36, 43.11) --
    (220.04, 42.92) --
    (219.72, 42.70) --
    (219.43, 42.47) --
    (219.14, 42.22) --
    (218.87, 41.95) --
    (218.62, 41.67) --
    (218.39, 41.37) --
    (218.18, 41.06) --
    (217.99, 40.73) --
    (217.81, 40.39) --
    (217.66, 40.05) --
    (217.54, 39.69) --
    (217.43, 39.33) --
    (217.35, 38.96) --
    (217.29, 38.59) --
    (217.25, 38.21) --
    (217.24, 37.83) --
    (217.25, 37.45) --
    (217.29, 37.08) --
    (217.35, 36.70) --
    (217.43, 36.33) --
    (217.54, 35.97) --
    (217.66, 35.61) --
    (217.81, 35.27) --
    (217.99, 34.93) --
    (218.18, 34.60) --
    (218.39, 34.29) --
    (218.62, 33.99) --
    (218.87, 33.71) --
    (219.14, 33.44) --
    (219.43, 33.19) --
    (219.72, 32.96) --
    (220.04, 32.75) --
    (220.36, 32.55) --
    (220.70, 32.38) --
    (221.05, 32.23) --
    (221.40, 32.10) --
    (221.77, 32.00) --
    (222.14, 31.91) --
    (222.51, 31.86) --
    (222.89, 31.82) --
    (223.26, 31.81) --
    (223.64, 31.82) --
    (224.02, 31.86) --
    (224.39, 31.91) --
    (224.76, 32.00) --
    (225.13, 32.10) --
    (225.48, 32.23) --
    (225.83, 32.38) --
    (226.17, 32.55) --
    (226.49, 32.75) --
    (226.80, 32.96) --
    (227.10, 33.19) --
    (227.39, 33.44) --
    (227.65, 33.71) --
    (227.90, 33.99) --
    (228.14, 34.29) --
    (228.35, 34.60) --
    (228.54, 34.93) --
    (228.71, 35.27) --
    (228.86, 35.61) --
    (228.99, 35.97) --
    (229.10, 36.33) --
    (229.18, 36.70) --
    (229.24, 37.08) --
    (229.27, 37.45) --
    cycle;
    
    \node[text=drawColor,anchor=base,inner sep=0pt, outer sep=0pt, scale=  
    1.00] at (223.26, 34.62) {6};
    
    \path[draw=drawColor,line width= 0.4pt,line join=round,line 
    cap=round,fill=fillColor] (295.57, 37.83) --
    (295.56, 38.21) --
    (295.53, 38.59) --
    (295.47, 38.96) --
    (295.38, 39.33) --
    (295.28, 39.69) --
    (295.15, 40.05) --
    (295.00, 40.39) --
    (294.83, 40.73) --
    (294.64, 41.06) --
    (294.42, 41.37) --
    (294.19, 41.67) --
    (293.94, 41.95) --
    (293.67, 42.22) --
    (293.39, 42.47) --
    (293.09, 42.70) --
    (292.78, 42.92) --
    (292.45, 43.11) --
    (292.11, 43.28) --
    (291.77, 43.43) --
    (291.41, 43.56) --
    (291.05, 43.66) --
    (290.68, 43.75) --
    (290.31, 43.81) --
    (289.93, 43.84) --
    (289.55, 43.85) --
    (289.17, 43.84) --
    (288.80, 43.81) --
    (288.42, 43.75) --
    (288.05, 43.66) --
    (287.69, 43.56) --
    (287.33, 43.43) --
    (286.99, 43.28) --
    (286.65, 43.11) --
    (286.32, 42.92) --
    (286.01, 42.70) --
    (285.71, 42.47) --
    (285.43, 42.22) --
    (285.16, 41.95) --
    (284.91, 41.67) --
    (284.68, 41.37) --
    (284.47, 41.06) --
    (284.27, 40.73) --
    (284.10, 40.39) --
    (283.95, 40.05) --
    (283.82, 39.69) --
    (283.72, 39.33) --
    (283.63, 38.96) --
    (283.58, 38.59) --
    (283.54, 38.21) --
    (283.53, 37.83) --
    (283.54, 37.45) --
    (283.58, 37.08) --
    (283.63, 36.70) --
    (283.72, 36.33) --
    (283.82, 35.97) --
    (283.95, 35.61) --
    (284.10, 35.27) --
    (284.27, 34.93) --
    (284.47, 34.60) --
    (284.68, 34.29) --
    (284.91, 33.99) --
    (285.16, 33.71) --
    (285.43, 33.44) --
    (285.71, 33.19) --
    (286.01, 32.96) --
    (286.32, 32.75) --
    (286.65, 32.55) --
    (286.99, 32.38) --
    (287.33, 32.23) --
    (287.69, 32.10) --
    (288.05, 32.00) --
    (288.42, 31.91) --
    (288.80, 31.86) --
    (289.17, 31.82) --
    (289.55, 31.81) --
    (289.93, 31.82) --
    (290.31, 31.86) --
    (290.68, 31.91) --
    (291.05, 32.00) --
    (291.41, 32.10) --
    (291.77, 32.23) --
    (292.11, 32.38) --
    (292.45, 32.55) --
    (292.78, 32.75) --
    (293.09, 32.96) --
    (293.39, 33.19) --
    (293.67, 33.44) --
    (293.94, 33.71) --
    (294.19, 33.99) --
    (294.42, 34.29) --
    (294.64, 34.60) --
    (294.83, 34.93) --
    (295.00, 35.27) --
    (295.15, 35.61) --
    (295.28, 35.97) --
    (295.38, 36.33) --
    (295.47, 36.70) --
    (295.53, 37.08) --
    (295.56, 37.45) --
    cycle;
    
    \node[text=drawColor,anchor=base,inner sep=0pt, outer sep=0pt, scale=  
    1.00] at (289.55, 34.62) {7};
    
    \path[draw=drawColor,line width= 0.4pt,dash pattern=on 4pt off 4pt ,line 
    join=round,line cap=round] ( 12.04, 25.93) --
    ( 12.04, 49.73);
    
    \path[draw=drawColor,line width= 0.4pt,dash pattern=on 4pt off 4pt ,line 
    join=round,line cap=round] (313.17, 25.93) --
    (313.17, 49.73);
    
    \node[text=drawColor,anchor=base,inner sep=0pt, outer sep=0pt, scale=  
    1.00] at 
    ( 12.04, 16.78) {0};
    
    \node[text=drawColor,anchor=base,inner sep=0pt, outer sep=0pt, scale=  
    1.00] at 
    (313.17, 16.78) {1};
    \end{scope}
    \end{tikzpicture}\\
    \vspace{-10mm}
    {\bf initial state}\\
    \begin{tikzpicture}[x=1.2pt,y=1.2pt]
    \definecolor{fillColor}{RGB}{255,255,255}
    \path[use as bounding box,fill=fillColor,fill opacity=0.00] (0,0) rectangle 
    (325.21, 57.82);
    \begin{scope}
    \path[clip] (  0.00,  0.00) rectangle (325.21, 57.82);
    \definecolor{drawColor}{RGB}{0,0,0}
    
    \path[draw=drawColor,line width= 0.4pt,line join=round,line cap=round] (  
    0.00, 37.83) -- (325.21, 37.83);
    \definecolor{fillColor}{RGB}{255,255,255}
    
    \path[draw=drawColor,line width= 0.4pt,line join=round,line 
    cap=round,fill=fillColor] ( 39.58, 37.83) --
    ( 39.56, 38.21) --
    ( 39.53, 38.59) --
    ( 39.47, 38.96) --
    ( 39.39, 39.33) --
    ( 39.28, 39.69) --
    ( 39.15, 40.05) --
    ( 39.00, 40.39) --
    ( 38.83, 40.73) --
    ( 38.64, 41.06) --
    ( 38.43, 41.37) --
    ( 38.19, 41.67) --
    ( 37.94, 41.95) --
    ( 37.68, 42.22) --
    ( 37.39, 42.47) --
    ( 37.09, 42.70) --
    ( 36.78, 42.92) --
    ( 36.46, 43.11) --
    ( 36.12, 43.28) --
    ( 35.77, 43.43) --
    ( 35.41, 43.56) --
    ( 35.05, 43.66) --
    ( 34.68, 43.75) --
    ( 34.31, 43.81) --
    ( 33.93, 43.84) --
    ( 33.55, 43.85) --
    ( 33.18, 43.84) --
    ( 32.80, 43.81) --
    ( 32.43, 43.75) --
    ( 32.06, 43.66) --
    ( 31.69, 43.56) --
    ( 31.34, 43.43) --
    ( 30.99, 43.28) --
    ( 30.65, 43.11) --
    ( 30.33, 42.92) --
    ( 30.01, 42.70) --
    ( 29.72, 42.47) --
    ( 29.43, 42.22) --
    ( 29.16, 41.95) --
    ( 28.91, 41.67) --
    ( 28.68, 41.37) --
    ( 28.47, 41.06) --
    ( 28.28, 40.73) --
    ( 28.10, 40.39) --
    ( 27.95, 40.05) --
    ( 27.83, 39.69) --
    ( 27.72, 39.33) --
    ( 27.64, 38.96) --
    ( 27.58, 38.59) --
    ( 27.54, 38.21) --
    ( 27.53, 37.83) --
    ( 27.54, 37.45) --
    ( 27.58, 37.08) --
    ( 27.64, 36.70) --
    ( 27.72, 36.33) --
    ( 27.83, 35.97) --
    ( 27.95, 35.61) --
    ( 28.10, 35.27) --
    ( 28.28, 34.93) --
    ( 28.47, 34.60) --
    ( 28.68, 34.29) --
    ( 28.91, 33.99) --
    ( 29.16, 33.71) --
    ( 29.43, 33.44) --
    ( 29.72, 33.19) --
    ( 30.01, 32.96) --
    ( 30.33, 32.75) --
    ( 30.65, 32.55) --
    ( 30.99, 32.38) --
    ( 31.34, 32.23) --
    ( 31.69, 32.10) --
    ( 32.06, 32.00) --
    ( 32.43, 31.91) --
    ( 32.80, 31.86) --
    ( 33.18, 31.82) --
    ( 33.55, 31.81) --
    ( 33.93, 31.82) --
    ( 34.31, 31.86) --
    ( 34.68, 31.91) --
    ( 35.05, 32.00) --
    ( 35.41, 32.10) --
    ( 35.77, 32.23) --
    ( 36.12, 32.38) --
    ( 36.46, 32.55) --
    ( 36.78, 32.75) --
    ( 37.09, 32.96) --
    ( 37.39, 33.19) --
    ( 37.68, 33.44) --
    ( 37.94, 33.71) --
    ( 38.19, 33.99) --
    ( 38.43, 34.29) --
    ( 38.64, 34.60) --
    ( 38.83, 34.93) --
    ( 39.00, 35.27) --
    ( 39.15, 35.61) --
    ( 39.28, 35.97) --
    ( 39.39, 36.33) --
    ( 39.47, 36.70) --
    ( 39.53, 37.08) --
    ( 39.56, 37.45) --
    cycle;
    
    \node[text=drawColor,anchor=base,inner sep=0pt, outer sep=0pt, scale=  
    1.00] at ( 33.55, 34.62) {1};
    
    \path[draw=drawColor,line width= 0.4pt,line join=round,line 
    cap=round,fill=fillColor] ( 82.59, 37.83) --
    ( 82.58, 38.21) --
    ( 82.55, 38.59) --
    ( 82.49, 38.96) --
    ( 82.41, 39.33) --
    ( 82.30, 39.69) --
    ( 82.17, 40.05) --
    ( 82.02, 40.39) --
    ( 81.85, 40.73) --
    ( 81.66, 41.06) --
    ( 81.44, 41.37) --
    ( 81.21, 41.67) --
    ( 80.96, 41.95) --
    ( 80.69, 42.22) --
    ( 80.41, 42.47) --
    ( 80.11, 42.70) --
    ( 79.80, 42.92) --
    ( 79.47, 43.11) --
    ( 79.14, 43.28) --
    ( 78.79, 43.43) --
    ( 78.43, 43.56) --
    ( 78.07, 43.66) --
    ( 77.70, 43.75) --
    ( 77.33, 43.81) --
    ( 76.95, 43.84) --
    ( 76.57, 43.85) --
    ( 76.19, 43.84) --
    ( 75.82, 43.81) --
    ( 75.44, 43.75) --
    ( 75.07, 43.66) --
    ( 74.71, 43.56) --
    ( 74.35, 43.43) --
    ( 74.01, 43.28) --
    ( 73.67, 43.11) --
    ( 73.34, 42.92) --
    ( 73.03, 42.70) --
    ( 72.73, 42.47) --
    ( 72.45, 42.22) --
    ( 72.18, 41.95) --
    ( 71.93, 41.67) --
    ( 71.70, 41.37) --
    ( 71.49, 41.06) --
    ( 71.29, 40.73) --
    ( 71.12, 40.39) --
    ( 70.97, 40.05) --
    ( 70.84, 39.69) --
    ( 70.74, 39.33) --
    ( 70.66, 38.96) --
    ( 70.60, 38.59) --
    ( 70.56, 38.21) --
    ( 70.55, 37.83) --
    ( 70.56, 37.45) --
    ( 70.60, 37.08) --
    ( 70.66, 36.70) --
    ( 70.74, 36.33) --
    ( 70.84, 35.97) --
    ( 70.97, 35.61) --
    ( 71.12, 35.27) --
    ( 71.29, 34.93) --
    ( 71.49, 34.60) --
    ( 71.70, 34.29) --
    ( 71.93, 33.99) --
    ( 72.18, 33.71) --
    ( 72.45, 33.44) --
    ( 72.73, 33.19) --
    ( 73.03, 32.96) --
    ( 73.34, 32.75) --
    ( 73.67, 32.55) --
    ( 74.01, 32.38) --
    ( 74.35, 32.23) --
    ( 74.71, 32.10) --
    ( 75.07, 32.00) --
    ( 75.44, 31.91) --
    ( 75.82, 31.86) --
    ( 76.19, 31.82) --
    ( 76.57, 31.81) --
    ( 76.95, 31.82) --
    ( 77.33, 31.86) --
    ( 77.70, 31.91) --
    ( 78.07, 32.00) --
    ( 78.43, 32.10) --
    ( 78.79, 32.23) --
    ( 79.14, 32.38) --
    ( 79.47, 32.55) --
    ( 79.80, 32.75) --
    ( 80.11, 32.96) --
    ( 80.41, 33.19) --
    ( 80.69, 33.44) --
    ( 80.96, 33.71) --
    ( 81.21, 33.99) --
    ( 81.44, 34.29) --
    ( 81.66, 34.60) --
    ( 81.85, 34.93) --
    ( 82.02, 35.27) --
    ( 82.17, 35.61) --
    ( 82.30, 35.97) --
    ( 82.41, 36.33) --
    ( 82.49, 36.70) --
    ( 82.55, 37.08) --
    ( 82.58, 37.45) --
    cycle;
    
    \node[text=drawColor,anchor=base,inner sep=0pt, outer sep=0pt, scale=  
    1.00] at ( 76.57, 34.62) {2};
    
    \path[draw=drawColor,line width= 0.4pt,line join=round,line 
    cap=round,fill=fillColor] (125.61, 37.83) --
    (125.60, 38.21) --
    (125.56, 38.59) --
    (125.51, 38.96) --
    (125.42, 39.33) --
    (125.32, 39.69) --
    (125.19, 40.05) --
    (125.04, 40.39) --
    (124.87, 40.73) --
    (124.67, 41.06) --
    (124.46, 41.37) --
    (124.23, 41.67) --
    (123.98, 41.95) --
    (123.71, 42.22) --
    (123.43, 42.47) --
    (123.13, 42.70) --
    (122.82, 42.92) --
    (122.49, 43.11) --
    (122.15, 43.28) --
    (121.81, 43.43) --
    (121.45, 43.56) --
    (121.09, 43.66) --
    (120.72, 43.75) --
    (120.34, 43.81) --
    (119.97, 43.84) --
    (119.59, 43.85) --
    (119.21, 43.84) --
    (118.83, 43.81) --
    (118.46, 43.75) --
    (118.09, 43.66) --
    (117.73, 43.56) --
    (117.37, 43.43) --
    (117.03, 43.28) --
    (116.69, 43.11) --
    (116.36, 42.92) --
    (116.05, 42.70) --
    (115.75, 42.47) --
    (115.47, 42.22) --
    (115.20, 41.95) --
    (114.95, 41.67) --
    (114.72, 41.37) --
    (114.50, 41.06) --
    (114.31, 40.73) --
    (114.14, 40.39) --
    (113.99, 40.05) --
    (113.86, 39.69) --
    (113.76, 39.33) --
    (113.67, 38.96) --
    (113.61, 38.59) --
    (113.58, 38.21) --
    (113.57, 37.83) --
    (113.58, 37.45) --
    (113.61, 37.08) --
    (113.67, 36.70) --
    (113.76, 36.33) --
    (113.86, 35.97) --
    (113.99, 35.61) --
    (114.14, 35.27) --
    (114.31, 34.93) --
    (114.50, 34.60) --
    (114.72, 34.29) --
    (114.95, 33.99) --
    (115.20, 33.71) --
    (115.47, 33.44) --
    (115.75, 33.19) --
    (116.05, 32.96) --
    (116.36, 32.75) --
    (116.69, 32.55) --
    (117.03, 32.38) --
    (117.37, 32.23) --
    (117.73, 32.10) --
    (118.09, 32.00) --
    (118.46, 31.91) --
    (118.83, 31.86) --
    (119.21, 31.82) --
    (119.59, 31.81) --
    (119.97, 31.82) --
    (120.34, 31.86) --
    (120.72, 31.91) --
    (121.09, 32.00) --
    (121.45, 32.10) --
    (121.81, 32.23) --
    (122.15, 32.38) --
    (122.49, 32.55) --
    (122.82, 32.75) --
    (123.13, 32.96) --
    (123.43, 33.19) --
    (123.71, 33.44) --
    (123.98, 33.71) --
    (124.23, 33.99) --
    (124.46, 34.29) --
    (124.67, 34.60) --
    (124.87, 34.93) --
    (125.04, 35.27) --
    (125.19, 35.61) --
    (125.32, 35.97) --
    (125.42, 36.33) --
    (125.51, 36.70) --
    (125.56, 37.08) --
    (125.60, 37.45) --
    cycle;
    
    \node[text=drawColor,anchor=base,inner sep=0pt, outer sep=0pt, scale=  
    1.00] at (119.59, 34.62) {3};
    
    \path[draw=drawColor,line width= 0.4pt,line join=round,line 
    cap=round,fill=fillColor] (168.63, 37.83) --
    (168.62, 38.21) --
    (168.58, 38.59) --
    (168.52, 38.96) --
    (168.44, 39.33) --
    (168.34, 39.69) --
    (168.21, 40.05) --
    (168.06, 40.39) --
    (167.89, 40.73) --
    (167.69, 41.06) --
    (167.48, 41.37) --
    (167.25, 41.67) --
    (167.00, 41.95) --
    (166.73, 42.22) --
    (166.45, 42.47) --
    (166.15, 42.70) --
    (165.83, 42.92) --
    (165.51, 43.11) --
    (165.17, 43.28) --
    (164.82, 43.43) --
    (164.47, 43.56) --
    (164.11, 43.66) --
    (163.74, 43.75) --
    (163.36, 43.81) --
    (162.99, 43.84) --
    (162.61, 43.85) --
    (162.23, 43.84) --
    (161.85, 43.81) --
    (161.48, 43.75) --
    (161.11, 43.66) --
    (160.75, 43.56) --
    (160.39, 43.43) --
    (160.04, 43.28) --
    (159.71, 43.11) --
    (159.38, 42.92) --
    (159.07, 42.70) --
    (158.77, 42.47) --
    (158.48, 42.22) --
    (158.22, 41.95) --
    (157.97, 41.67) --
    (157.74, 41.37) --
    (157.52, 41.06) --
    (157.33, 40.73) --
    (157.16, 40.39) --
    (157.01, 40.05) --
    (156.88, 39.69) --
    (156.77, 39.33) --
    (156.69, 38.96) --
    (156.63, 38.59) --
    (156.60, 38.21) --
    (156.58, 37.83) --
    (156.60, 37.45) --
    (156.63, 37.08) --
    (156.69, 36.70) --
    (156.77, 36.33) --
    (156.88, 35.97) --
    (157.01, 35.61) --
    (157.16, 35.27) --
    (157.33, 34.93) --
    (157.52, 34.60) --
    (157.74, 34.29) --
    (157.97, 33.99) --
    (158.22, 33.71) --
    (158.48, 33.44) --
    (158.77, 33.19) --
    (159.07, 32.96) --
    (159.38, 32.75) --
    (159.71, 32.55) --
    (160.04, 32.38) --
    (160.39, 32.23) --
    (160.75, 32.10) --
    (161.11, 32.00) --
    (161.48, 31.91) --
    (161.85, 31.86) --
    (162.23, 31.82) --
    (162.61, 31.81) --
    (162.99, 31.82) --
    (163.36, 31.86) --
    (163.74, 31.91) --
    (164.11, 32.00) --
    (164.47, 32.10) --
    (164.82, 32.23) --
    (165.17, 32.38) --
    (165.51, 32.55) --
    (165.83, 32.75) --
    (166.15, 32.96) --
    (166.45, 33.19) --
    (166.73, 33.44) --
    (167.00, 33.71) --
    (167.25, 33.99) --
    (167.48, 34.29) --
    (167.69, 34.60) --
    (167.89, 34.93) --
    (168.06, 35.27) --
    (168.21, 35.61) --
    (168.34, 35.97) --
    (168.44, 36.33) --
    (168.52, 36.70) --
    (168.58, 37.08) --
    (168.62, 37.45) --
    cycle;
    
    \node[text=drawColor,anchor=base,inner sep=0pt, outer sep=0pt, scale=  
    1.00] at (162.61, 34.62) {4};
    
    \path[draw=drawColor,line width= 0.4pt,line join=round,line 
    cap=round,fill=fillColor] (211.65, 37.83) --
    (211.64, 38.21) --
    (211.60, 38.59) --
    (211.54, 38.96) --
    (211.46, 39.33) --
    (211.35, 39.69) --
    (211.22, 40.05) --
    (211.07, 40.39) --
    (210.90, 40.73) --
    (210.71, 41.06) --
    (210.50, 41.37) --
    (210.27, 41.67) --
    (210.02, 41.95) --
    (209.75, 42.22) --
    (209.46, 42.47) --
    (209.17, 42.70) --
    (208.85, 42.92) --
    (208.53, 43.11) --
    (208.19, 43.28) --
    (207.84, 43.43) --
    (207.49, 43.56) --
    (207.12, 43.66) --
    (206.75, 43.75) --
    (206.38, 43.81) --
    (206.00, 43.84) --
    (205.63, 43.85) --
    (205.25, 43.84) --
    (204.87, 43.81) --
    (204.50, 43.75) --
    (204.13, 43.66) --
    (203.76, 43.56) --
    (203.41, 43.43) --
    (203.06, 43.28) --
    (202.72, 43.11) --
    (202.40, 42.92) --
    (202.09, 42.70) --
    (201.79, 42.47) --
    (201.50, 42.22) --
    (201.24, 41.95) --
    (200.98, 41.67) --
    (200.75, 41.37) --
    (200.54, 41.06) --
    (200.35, 40.73) --
    (200.18, 40.39) --
    (200.03, 40.05) --
    (199.90, 39.69) --
    (199.79, 39.33) --
    (199.71, 38.96) --
    (199.65, 38.59) --
    (199.61, 38.21) --
    (199.60, 37.83) --
    (199.61, 37.45) --
    (199.65, 37.08) --
    (199.71, 36.70) --
    (199.79, 36.33) --
    (199.90, 35.97) --
    (200.03, 35.61) --
    (200.18, 35.27) --
    (200.35, 34.93) --
    (200.54, 34.60) --
    (200.75, 34.29) --
    (200.98, 33.99) --
    (201.24, 33.71) --
    (201.50, 33.44) --
    (201.79, 33.19) --
    (202.09, 32.96) --
    (202.40, 32.75) --
    (202.72, 32.55) --
    (203.06, 32.38) --
    (203.41, 32.23) --
    (203.76, 32.10) --
    (204.13, 32.00) --
    (204.50, 31.91) --
    (204.87, 31.86) --
    (205.25, 31.82) --
    (205.63, 31.81) --
    (206.00, 31.82) --
    (206.38, 31.86) --
    (206.75, 31.91) --
    (207.12, 32.00) --
    (207.49, 32.10) --
    (207.84, 32.23) --
    (208.19, 32.38) --
    (208.53, 32.55) --
    (208.85, 32.75) --
    (209.17, 32.96) --
    (209.46, 33.19) --
    (209.75, 33.44) --
    (210.02, 33.71) --
    (210.27, 33.99) --
    (210.50, 34.29) --
    (210.71, 34.60) --
    (210.90, 34.93) --
    (211.07, 35.27) --
    (211.22, 35.61) --
    (211.35, 35.97) --
    (211.46, 36.33) --
    (211.54, 36.70) --
    (211.60, 37.08) --
    (211.64, 37.45) --
    cycle;
    
    \node[text=drawColor,anchor=base,inner sep=0pt, outer sep=0pt, scale=  
    1.00] at (205.63, 34.62) {5};
    
    \path[draw=drawColor,line width= 0.4pt,line join=round,line 
    cap=round,fill=fillColor] (254.67, 37.83) --
    (254.65, 38.21) --
    (254.62, 38.59) --
    (254.56, 38.96) --
    (254.48, 39.33) --
    (254.37, 39.69) --
    (254.24, 40.05) --
    (254.09, 40.39) --
    (253.92, 40.73) --
    (253.73, 41.06) --
    (253.52, 41.37) --
    (253.28, 41.67) --
    (253.03, 41.95) --
    (252.77, 42.22) --
    (252.48, 42.47) --
    (252.18, 42.70) --
    (251.87, 42.92) --
    (251.54, 43.11) --
    (251.21, 43.28) --
    (250.86, 43.43) --
    (250.50, 43.56) --
    (250.14, 43.66) --
    (249.77, 43.75) --
    (249.40, 43.81) --
    (249.02, 43.84) --
    (248.64, 43.85) --
    (248.27, 43.84) --
    (247.89, 43.81) --
    (247.51, 43.75) --
    (247.15, 43.66) --
    (246.78, 43.56) --
    (246.43, 43.43) --
    (246.08, 43.28) --
    (245.74, 43.11) --
    (245.42, 42.92) --
    (245.10, 42.70) --
    (244.80, 42.47) --
    (244.52, 42.22) --
    (244.25, 41.95) --
    (244.00, 41.67) --
    (243.77, 41.37) --
    (243.56, 41.06) --
    (243.37, 40.73) --
    (243.19, 40.39) --
    (243.04, 40.05) --
    (242.92, 39.69) --
    (242.81, 39.33) --
    (242.73, 38.96) --
    (242.67, 38.59) --
    (242.63, 38.21) --
    (242.62, 37.83) --
    (242.63, 37.45) --
    (242.67, 37.08) --
    (242.73, 36.70) --
    (242.81, 36.33) --
    (242.92, 35.97) --
    (243.04, 35.61) --
    (243.19, 35.27) --
    (243.37, 34.93) --
    (243.56, 34.60) --
    (243.77, 34.29) --
    (244.00, 33.99) --
    (244.25, 33.71) --
    (244.52, 33.44) --
    (244.80, 33.19) --
    (245.10, 32.96) --
    (245.42, 32.75) --
    (245.74, 32.55) --
    (246.08, 32.38) --
    (246.43, 32.23) --
    (246.78, 32.10) --
    (247.15, 32.00) --
    (247.51, 31.91) --
    (247.89, 31.86) --
    (248.27, 31.82) --
    (248.64, 31.81) --
    (249.02, 31.82) --
    (249.40, 31.86) --
    (249.77, 31.91) --
    (250.14, 32.00) --
    (250.50, 32.10) --
    (250.86, 32.23) --
    (251.21, 32.38) --
    (251.54, 32.55) --
    (251.87, 32.75) --
    (252.18, 32.96) --
    (252.48, 33.19) --
    (252.77, 33.44) --
    (253.03, 33.71) --
    (253.28, 33.99) --
    (253.52, 34.29) --
    (253.73, 34.60) --
    (253.92, 34.93) --
    (254.09, 35.27) --
    (254.24, 35.61) --
    (254.37, 35.97) --
    (254.48, 36.33) --
    (254.56, 36.70) --
    (254.62, 37.08) --
    (254.65, 37.45) --
    cycle;
    
    \node[text=drawColor,anchor=base,inner sep=0pt, outer sep=0pt, scale=  
    1.00] at (248.64, 34.62) {6};
    
    \path[draw=drawColor,line width= 0.4pt,line join=round,line 
    cap=round,fill=fillColor] (297.68, 37.83) --
    (297.67, 38.21) --
    (297.64, 38.59) --
    (297.58, 38.96) --
    (297.49, 39.33) --
    (297.39, 39.69) --
    (297.26, 40.05) --
    (297.11, 40.39) --
    (296.94, 40.73) --
    (296.75, 41.06) --
    (296.53, 41.37) --
    (296.30, 41.67) --
    (296.05, 41.95) --
    (295.78, 42.22) --
    (295.50, 42.47) --
    (295.20, 42.70) --
    (294.89, 42.92) --
    (294.56, 43.11) --
    (294.23, 43.28) --
    (293.88, 43.43) --
    (293.52, 43.56) --
    (293.16, 43.66) --
    (292.79, 43.75) --
    (292.42, 43.81) --
    (292.04, 43.84) --
    (291.66, 43.85) --
    (291.28, 43.84) --
    (290.91, 43.81) --
    (290.53, 43.75) --
    (290.16, 43.66) --
    (289.80, 43.56) --
    (289.44, 43.43) --
    (289.10, 43.28) --
    (288.76, 43.11) --
    (288.43, 42.92) --
    (288.12, 42.70) --
    (287.82, 42.47) --
    (287.54, 42.22) --
    (287.27, 41.95) --
    (287.02, 41.67) --
    (286.79, 41.37) --
    (286.58, 41.06) --
    (286.38, 40.73) --
    (286.21, 40.39) --
    (286.06, 40.05) --
    (285.93, 39.69) --
    (285.83, 39.33) --
    (285.75, 38.96) --
    (285.69, 38.59) --
    (285.65, 38.21) --
    (285.64, 37.83) --
    (285.65, 37.45) --
    (285.69, 37.08) --
    (285.75, 36.70) --
    (285.83, 36.33) --
    (285.93, 35.97) --
    (286.06, 35.61) --
    (286.21, 35.27) --
    (286.38, 34.93) --
    (286.58, 34.60) --
    (286.79, 34.29) --
    (287.02, 33.99) --
    (287.27, 33.71) --
    (287.54, 33.44) --
    (287.82, 33.19) --
    (288.12, 32.96) --
    (288.43, 32.75) --
    (288.76, 32.55) --
    (289.10, 32.38) --
    (289.44, 32.23) --
    (289.80, 32.10) --
    (290.16, 32.00) --
    (290.53, 31.91) --
    (290.91, 31.86) --
    (291.28, 31.82) --
    (291.66, 31.81) --
    (292.04, 31.82) --
    (292.42, 31.86) --
    (292.79, 31.91) --
    (293.16, 32.00) --
    (293.52, 32.10) --
    (293.88, 32.23) --
    (294.23, 32.38) --
    (294.56, 32.55) --
    (294.89, 32.75) --
    (295.20, 32.96) --
    (295.50, 33.19) --
    (295.78, 33.44) --
    (296.05, 33.71) --
    (296.30, 33.99) --
    (296.53, 34.29) --
    (296.75, 34.60) --
    (296.94, 34.93) --
    (297.11, 35.27) --
    (297.26, 35.61) --
    (297.39, 35.97) --
    (297.49, 36.33) --
    (297.58, 36.70) --
    (297.64, 37.08) --
    (297.67, 37.45) --
    cycle;
    
    \node[text=drawColor,anchor=base,inner sep=0pt, outer sep=0pt, scale=  
    1.00] at (291.66, 34.62) {7};
    
    \path[draw=drawColor,line width= 0.4pt,dash pattern=on 4pt off 4pt ,line 
    join=round,line cap=round] ( 12.04, 25.93) --
    ( 12.04, 49.73);
    
    \path[draw=drawColor,line width= 0.4pt,dash pattern=on 4pt off 4pt ,line 
    join=round,line cap=round] (313.17, 25.93) --
    (313.17, 49.73);
    
    \node[text=drawColor,anchor=base,inner sep=0pt, outer sep=0pt, scale=  
    1.00] at ( 12.04, 16.78) {0};
    
    \node[text=drawColor,anchor=base,inner sep=0pt, outer sep=0pt, scale=  
    1.00] at (313.17, 16.78) {1};
    \end{scope}
    \end{tikzpicture}\\
    \vspace{-10mm}
    {\bf steady state}
    \caption{Application of the global model to a system of 7 particles with 
    weight matrix $\bm{\tilde{W}} = \bm{1}\bm{1}\transpose$.}
    \label{fig:swarm-simple}
\end{figure*}
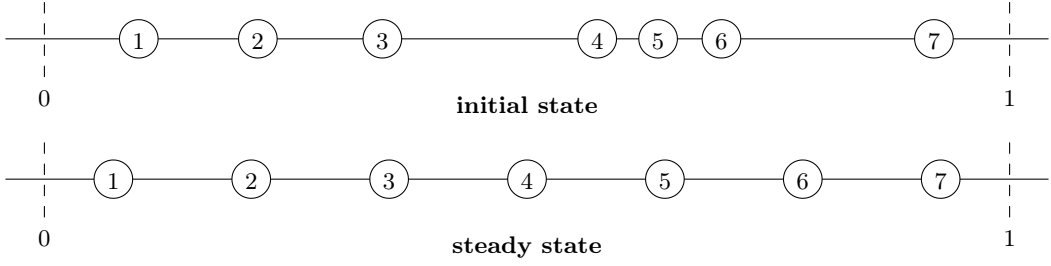
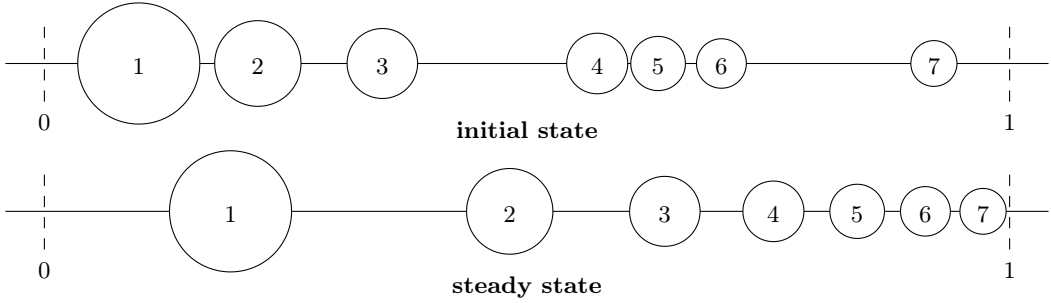
\begin{figure*}[t]
    \centering
    \begin{tikzpicture}[x=1.2pt,y=1.2pt]
    \definecolor{fillColor}{RGB}{255,255,255}
    \path[use as bounding box,fill=fillColor,fill opacity=0.00] (0,0) rectangle 
    (325.21, 57.82);
    \begin{scope}
    \path[clip] (  0.00,  0.00) rectangle (325.21, 57.82);
    \definecolor{drawColor}{RGB}{0,0,0}
    
    \path[draw=drawColor,line width= 0.4pt,line join=round,line cap=round] (  
    0.00, 37.83) -- (325.21, 37.83);
    \definecolor{fillColor}{RGB}{255,255,255}
    
    \path[draw=drawColor,line width= 0.4pt,line join=round,line 
    cap=round,fill=fillColor] ( 60.53, 37.83) --
    ( 60.49, 39.03) --
    ( 60.38, 40.22) --
    ( 60.19, 41.40) --
    ( 59.93, 42.57) --
    ( 59.60, 43.72) --
    ( 59.19, 44.84) --
    ( 58.72, 45.94) --
    ( 58.18, 47.01) --
    ( 57.57, 48.03) --
    ( 56.89, 49.02) --
    ( 56.16, 49.97) --
    ( 55.37, 50.87) --
    ( 54.52, 51.71) --
    ( 53.63, 52.50) --
    ( 52.68, 53.24) --
    ( 51.69, 53.91) --
    ( 50.66, 54.52) --
    ( 49.60, 55.06) --
    ( 48.50, 55.54) --
    ( 47.37, 55.94) --
    ( 46.22, 56.28) --
    ( 45.05, 56.54) --
    ( 43.87, 56.72) --
    ( 42.68, 56.84) --
    ( 41.49, 56.88) --
    ( 40.29, 56.84) --
    ( 39.10, 56.72) --
    ( 37.92, 56.54) --
    ( 36.75, 56.28) --
    ( 35.60, 55.94) --
    ( 34.48, 55.54) --
    ( 33.38, 55.06) --
    ( 32.31, 54.52) --
    ( 31.28, 53.91) --
    ( 30.29, 53.24) --
    ( 29.35, 52.50) --
    ( 28.45, 51.71) --
    ( 27.60, 50.87) --
    ( 26.81, 49.97) --
    ( 26.08, 49.02) --
    ( 25.41, 48.03) --
    ( 24.80, 47.01) --
    ( 24.25, 45.94) --
    ( 23.78, 44.84) --
    ( 23.37, 43.72) --
    ( 23.04, 42.57) --
    ( 22.78, 41.40) --
    ( 22.59, 40.22) --
    ( 22.48, 39.03) --
    ( 22.44, 37.83) --
    ( 22.48, 36.63) --
    ( 22.59, 35.44) --
    ( 22.78, 34.26) --
    ( 23.04, 33.09) --
    ( 23.37, 31.95) --
    ( 23.78, 30.82) --
    ( 24.25, 29.72) --
    ( 24.80, 28.66) --
    ( 25.41, 27.63) --
    ( 26.08, 26.64) --
    ( 26.81, 25.69) --
    ( 27.60, 24.79) --
    ( 28.45, 23.95) --
    ( 29.35, 23.16) --
    ( 30.29, 22.42) --
    ( 31.28, 21.75) --
    ( 32.31, 21.14) --
    ( 33.38, 20.60) --
    ( 34.48, 20.12) --
    ( 35.60, 19.72) --
    ( 36.75, 19.38) --
    ( 37.92, 19.12) --
    ( 39.10, 18.94) --
    ( 40.29, 18.82) --
    ( 41.49, 18.79) --
    ( 42.68, 18.82) --
    ( 43.87, 18.94) --
    ( 45.05, 19.12) --
    ( 46.22, 19.38) --
    ( 47.37, 19.72) --
    ( 48.50, 20.12) --
    ( 49.60, 20.60) --
    ( 50.66, 21.14) --
    ( 51.69, 21.75) --
    ( 52.68, 22.42) --
    ( 53.63, 23.16) --
    ( 54.52, 23.95) --
    ( 55.37, 24.79) --
    ( 56.16, 25.69) --
    ( 56.89, 26.64) --
    ( 57.57, 27.63) --
    ( 58.18, 28.66) --
    ( 58.72, 29.72) --
    ( 59.19, 30.82) --
    ( 59.60, 31.95) --
    ( 59.93, 33.09) --
    ( 60.19, 34.26) --
    ( 60.38, 35.44) --
    ( 60.49, 36.63) --
    cycle;
    
    \node[text=drawColor,anchor=base,inner sep=0pt, outer sep=0pt, scale=  
    1.00] at ( 41.49, 34.62) {1};
    
    \path[draw=drawColor,line width= 0.4pt,line join=round,line 
    cap=round,fill=fillColor] ( 92.10, 37.83) --
    ( 92.07, 38.68) --
    ( 91.99, 39.52) --
    ( 91.86, 40.35) --
    ( 91.68, 41.18) --
    ( 91.44, 41.99) --
    ( 91.15, 42.79) --
    ( 90.82, 43.56) --
    ( 90.43, 44.32) --
    ( 90.00, 45.05) --
    ( 89.53, 45.75) --
    ( 89.01, 46.41) --
    ( 88.45, 47.05) --
    ( 87.85, 47.65) --
    ( 87.22, 48.21) --
    ( 86.55, 48.73) --
    ( 85.85, 49.20) --
    ( 85.12, 49.63) --
    ( 84.37, 50.02) --
    ( 83.59, 50.35) --
    ( 82.80, 50.64) --
    ( 81.98, 50.87) --
    ( 81.16, 51.06) --
    ( 80.32, 51.19) --
    ( 79.48, 51.27) --
    ( 78.63, 51.30) --
    ( 77.79, 51.27) --
    ( 76.95, 51.19) --
    ( 76.11, 51.06) --
    ( 75.28, 50.87) --
    ( 74.47, 50.64) --
    ( 73.68, 50.35) --
    ( 72.90, 50.02) --
    ( 72.15, 49.63) --
    ( 71.42, 49.20) --
    ( 70.72, 48.73) --
    ( 70.05, 48.21) --
    ( 69.41, 47.65) --
    ( 68.82, 47.05) --
    ( 68.26, 46.41) --
    ( 67.74, 45.75) --
    ( 67.26, 45.05) --
    ( 66.83, 44.32) --
    ( 66.45, 43.56) --
    ( 66.11, 42.79) --
    ( 65.83, 41.99) --
    ( 65.59, 41.18) --
    ( 65.41, 40.35) --
    ( 65.27, 39.52) --
    ( 65.19, 38.68) --
    ( 65.17, 37.83) --
    ( 65.19, 36.98) --
    ( 65.27, 36.14) --
    ( 65.41, 35.31) --
    ( 65.59, 34.48) --
    ( 65.83, 33.67) --
    ( 66.11, 32.87) --
    ( 66.45, 32.10) --
    ( 66.83, 31.34) --
    ( 67.26, 30.61) --
    ( 67.74, 29.91) --
    ( 68.26, 29.25) --
    ( 68.82, 28.61) --
    ( 69.41, 28.01) --
    ( 70.05, 27.45) --
    ( 70.72, 26.94) --
    ( 71.42, 26.46) --
    ( 72.15, 26.03) --
    ( 72.90, 25.65) --
    ( 73.68, 25.31) --
    ( 74.47, 25.02) --
    ( 75.28, 24.79) --
    ( 76.11, 24.60) --
    ( 76.95, 24.47) --
    ( 77.79, 24.39) --
    ( 78.63, 24.36) --
    ( 79.48, 24.39) --
    ( 80.32, 24.47) --
    ( 81.16, 24.60) --
    ( 81.98, 24.79) --
    ( 82.80, 25.02) --
    ( 83.59, 25.31) --
    ( 84.37, 25.65) --
    ( 85.12, 26.03) --
    ( 85.85, 26.46) --
    ( 86.55, 26.94) --
    ( 87.22, 27.45) --
    ( 87.85, 28.01) --
    ( 88.45, 28.61) --
    ( 89.01, 29.25) --
    ( 89.53, 29.91) --
    ( 90.00, 30.61) --
    ( 90.43, 31.34) --
    ( 90.82, 32.10) --
    ( 91.15, 32.87) --
    ( 91.44, 33.67) --
    ( 91.68, 34.48) --
    ( 91.86, 35.31) --
    ( 91.99, 36.14) --
    ( 92.07, 36.98) --
    cycle;
    
    \node[text=drawColor,anchor=base,inner sep=0pt, outer sep=0pt, scale=  
    1.00] at ( 78.63, 34.62) {2};
    
    \path[draw=drawColor,line width= 0.4pt,line join=round,line 
    cap=round,fill=fillColor] (128.47, 37.83) --
    (128.45, 38.52) --
    (128.39, 39.21) --
    (128.28, 39.89) --
    (128.13, 40.56) --
    (127.93, 41.23) --
    (127.70, 41.88) --
    (127.43, 42.51) --
    (127.11, 43.13) --
    (126.76, 43.72) --
    (126.37, 44.29) --
    (125.95, 44.84) --
    (125.49, 45.36) --
    (125.00, 45.85) --
    (124.49, 46.30) --
    (123.94, 46.73) --
    (123.37, 47.11) --
    (122.77, 47.47) --
    (122.16, 47.78) --
    (121.52, 48.05) --
    (120.87, 48.29) --
    (120.21, 48.48) --
    (119.54, 48.63) --
    (118.86, 48.74) --
    (118.17, 48.80) --
    (117.48, 48.83) --
    (116.79, 48.80) --
    (116.10, 48.74) --
    (115.42, 48.63) --
    (114.74, 48.48) --
    (114.08, 48.29) --
    (113.43, 48.05) --
    (112.80, 47.78) --
    (112.18, 47.47) --
    (111.59, 47.11) --
    (111.01, 46.73) --
    (110.47, 46.30) --
    (109.95, 45.85) --
    (109.46, 45.36) --
    (109.00, 44.84) --
    (108.58, 44.29) --
    (108.19, 43.72) --
    (107.84, 43.13) --
    (107.53, 42.51) --
    (107.25, 41.88) --
    (107.02, 41.23) --
    (106.83, 40.56) --
    (106.68, 39.89) --
    (106.57, 39.21) --
    (106.50, 38.52) --
    (106.48, 37.83) --
    (106.50, 37.14) --
    (106.57, 36.45) --
    (106.68, 35.77) --
    (106.83, 35.10) --
    (107.02, 34.43) --
    (107.25, 33.78) --
    (107.53, 33.15) --
    (107.84, 32.53) --
    (108.19, 31.94) --
    (108.58, 31.37) --
    (109.00, 30.82) --
    (109.46, 30.30) --
    (109.95, 29.81) --
    (110.47, 29.36) --
    (111.01, 28.93) --
    (111.59, 28.55) --
    (112.18, 28.19) --
    (112.80, 27.88) --
    (113.43, 27.61) --
    (114.08, 27.37) --
    (114.74, 27.18) --
    (115.42, 27.03) --
    (116.10, 26.92) --
    (116.79, 26.86) --
    (117.48, 26.83) --
    (118.17, 26.86) --
    (118.86, 26.92) --
    (119.54, 27.03) --
    (120.21, 27.18) --
    (120.87, 27.37) --
    (121.52, 27.61) --
    (122.16, 27.88) --
    (122.77, 28.19) --
    (123.37, 28.55) --
    (123.94, 28.93) --
    (124.49, 29.36) --
    (125.00, 29.81) --
    (125.49, 30.30) --
    (125.95, 30.82) --
    (126.37, 31.37) --
    (126.76, 31.94) --
    (127.11, 32.53) --
    (127.43, 33.15) --
    (127.70, 33.78) --
    (127.93, 34.43) --
    (128.13, 35.10) --
    (128.28, 35.77) --
    (128.39, 36.45) --
    (128.45, 37.14) --
    cycle;
    
    \node[text=drawColor,anchor=base,inner sep=0pt, outer sep=0pt, scale=  
    1.00] at (117.48, 34.62) {3};
    
    \path[draw=drawColor,line width= 0.4pt,line join=round,line 
    cap=round,fill=fillColor] (194.03, 37.83) --
    (194.01, 38.43) --
    (193.96, 39.02) --
    (193.86, 39.61) --
    (193.73, 40.20) --
    (193.57, 40.77) --
    (193.36, 41.34) --
    (193.13, 41.88) --
    (192.85, 42.42) --
    (192.55, 42.93) --
    (192.21, 43.43) --
    (191.85, 43.90) --
    (191.45, 44.35) --
    (191.03, 44.77) --
    (190.58, 45.17) --
    (190.11, 45.53) --
    (189.61, 45.87) --
    (189.10, 46.17) --
    (188.56, 46.45) --
    (188.01, 46.68) --
    (187.45, 46.89) --
    (186.88, 47.05) --
    (186.29, 47.18) --
    (185.70, 47.28) --
    (185.11, 47.33) --
    (184.51, 47.35) --
    (183.91, 47.33) --
    (183.32, 47.28) --
    (182.73, 47.18) --
    (182.14, 47.05) --
    (181.57, 46.89) --
    (181.00, 46.68) --
    (180.46, 46.45) --
    (179.92, 46.17) --
    (179.41, 45.87) --
    (178.91, 45.53) --
    (178.44, 45.17) --
    (177.99, 44.77) --
    (177.57, 44.35) --
    (177.17, 43.90) --
    (176.81, 43.43) --
    (176.47, 42.93) --
    (176.16, 42.42) --
    (175.89, 41.88) --
    (175.66, 41.34) --
    (175.45, 40.77) --
    (175.29, 40.20) --
    (175.16, 39.61) --
    (175.06, 39.02) --
    (175.01, 38.43) --
    (174.99, 37.83) --
    (175.01, 37.23) --
    (175.06, 36.64) --
    (175.16, 36.05) --
    (175.29, 35.46) --
    (175.45, 34.89) --
    (175.66, 34.32) --
    (175.89, 33.78) --
    (176.16, 33.24) --
    (176.47, 32.73) --
    (176.81, 32.23) --
    (177.17, 31.76) --
    (177.57, 31.31) --
    (177.99, 30.89) --
    (178.44, 30.49) --
    (178.91, 30.13) --
    (179.41, 29.79) --
    (179.92, 29.49) --
    (180.46, 29.21) --
    (181.00, 28.98) --
    (181.57, 28.77) --
    (182.14, 28.61) --
    (182.73, 28.48) --
    (183.32, 28.38) --
    (183.91, 28.33) --
    (184.51, 28.31) --
    (185.11, 28.33) --
    (185.70, 28.38) --
    (186.29, 28.48) --
    (186.88, 28.61) --
    (187.45, 28.77) --
    (188.01, 28.98) --
    (188.56, 29.21) --
    (189.10, 29.49) --
    (189.61, 29.79) --
    (190.11, 30.13) --
    (190.58, 30.49) --
    (191.03, 30.89) --
    (191.45, 31.31) --
    (191.85, 31.76) --
    (192.21, 32.23) --
    (192.55, 32.73) --
    (192.85, 33.24) --
    (193.13, 33.78) --
    (193.36, 34.32) --
    (193.57, 34.89) --
    (193.73, 35.46) --
    (193.86, 36.05) --
    (193.96, 36.64) --
    (194.01, 37.23) --
    cycle;
    
    \node[text=drawColor,anchor=base,inner sep=0pt, outer sep=0pt, scale=  
    1.00] at (184.51, 34.62) {4};
    
    \path[draw=drawColor,line width= 0.4pt,line join=round,line 
    cap=round,fill=fillColor] (212.02, 37.83) --
    (212.00, 38.37) --
    (211.95, 38.90) --
    (211.87, 39.43) --
    (211.75, 39.95) --
    (211.60, 40.46) --
    (211.42, 40.97) --
    (211.21, 41.46) --
    (210.97, 41.93) --
    (210.69, 42.39) --
    (210.39, 42.84) --
    (210.07, 43.26) --
    (209.71, 43.66) --
    (209.33, 44.04) --
    (208.93, 44.39) --
    (208.51, 44.72) --
    (208.07, 45.02) --
    (207.61, 45.29) --
    (207.13, 45.54) --
    (206.64, 45.75) --
    (206.14, 45.93) --
    (205.62, 46.08) --
    (205.10, 46.20) --
    (204.57, 46.28) --
    (204.04, 46.33) --
    (203.50, 46.35) --
    (202.97, 46.33) --
    (202.44, 46.28) --
    (201.91, 46.20) --
    (201.39, 46.08) --
    (200.87, 45.93) --
    (200.37, 45.75) --
    (199.88, 45.54) --
    (199.40, 45.29) --
    (198.94, 45.02) --
    (198.50, 44.72) --
    (198.07, 44.39) --
    (197.67, 44.04) --
    (197.29, 43.66) --
    (196.94, 43.26) --
    (196.61, 42.84) --
    (196.31, 42.39) --
    (196.04, 41.93) --
    (195.80, 41.46) --
    (195.58, 40.97) --
    (195.40, 40.46) --
    (195.25, 39.95) --
    (195.14, 39.43) --
    (195.05, 38.90) --
    (195.00, 38.37) --
    (194.99, 37.83) --
    (195.00, 37.30) --
    (195.05, 36.76) --
    (195.14, 36.23) --
    (195.25, 35.71) --
    (195.40, 35.20) --
    (195.58, 34.69) --
    (195.80, 34.20) --
    (196.04, 33.73) --
    (196.31, 33.27) --
    (196.61, 32.82) --
    (196.94, 32.40) --
    (197.29, 32.00) --
    (197.67, 31.62) --
    (198.07, 31.27) --
    (198.50, 30.94) --
    (198.94, 30.64) --
    (199.40, 30.37) --
    (199.88, 30.12) --
    (200.37, 29.91) --
    (200.87, 29.73) --
    (201.39, 29.58) --
    (201.91, 29.46) --
    (202.44, 29.38) --
    (202.97, 29.33) --
    (203.50, 29.31) --
    (204.04, 29.33) --
    (204.57, 29.38) --
    (205.10, 29.46) --
    (205.62, 29.58) --
    (206.14, 29.73) --
    (206.64, 29.91) --
    (207.13, 30.12) --
    (207.61, 30.37) --
    (208.07, 30.64) --
    (208.51, 30.94) --
    (208.93, 31.27) --
    (209.33, 31.62) --
    (209.71, 32.00) --
    (210.07, 32.40) --
    (210.39, 32.82) --
    (210.69, 33.27) --
    (210.97, 33.73) --
    (211.21, 34.20) --
    (211.42, 34.69) --
    (211.60, 35.20) --
    (211.75, 35.71) --
    (211.87, 36.23) --
    (211.95, 36.76) --
    (212.00, 37.30) --
    cycle;
    
    \node[text=drawColor,anchor=base,inner sep=0pt, outer sep=0pt, scale=  
    1.00] at (203.50, 34.62) {5};
    
    \path[draw=drawColor,line width= 0.4pt,line join=round,line 
    cap=round,fill=fillColor] (231.04, 37.83) --
    (231.02, 38.32) --
    (230.98, 38.80) --
    (230.90, 39.29) --
    (230.79, 39.76) --
    (230.66, 40.23) --
    (230.49, 40.69) --
    (230.30, 41.14) --
    (230.08, 41.58) --
    (229.83, 42.00) --
    (229.55, 42.40) --
    (229.25, 42.79) --
    (228.93, 43.15) --
    (228.59, 43.50) --
    (228.22, 43.82) --
    (227.83, 44.12) --
    (227.43, 44.39) --
    (227.01, 44.64) --
    (226.57, 44.87) --
    (226.13, 45.06) --
    (225.67, 45.22) --
    (225.20, 45.36) --
    (224.72, 45.47) --
    (224.24, 45.54) --
    (223.75, 45.59) --
    (223.26, 45.61) --
    (222.78, 45.59) --
    (222.29, 45.54) --
    (221.81, 45.47) --
    (221.33, 45.36) --
    (220.86, 45.22) --
    (220.40, 45.06) --
    (219.95, 44.87) --
    (219.52, 44.64) --
    (219.10, 44.39) --
    (218.69, 44.12) --
    (218.31, 43.82) --
    (217.94, 43.50) --
    (217.60, 43.15) --
    (217.27, 42.79) --
    (216.97, 42.40) --
    (216.70, 42.00) --
    (216.45, 41.58) --
    (216.23, 41.14) --
    (216.04, 40.69) --
    (215.87, 40.23) --
    (215.73, 39.76) --
    (215.63, 39.29) --
    (215.55, 38.80) --
    (215.50, 38.32) --
    (215.49, 37.83) --
    (215.50, 37.34) --
    (215.55, 36.86) --
    (215.63, 36.37) --
    (215.73, 35.90) --
    (215.87, 35.43) --
    (216.04, 34.97) --
    (216.23, 34.52) --
    (216.45, 34.08) --
    (216.70, 33.66) --
    (216.97, 33.26) --
    (217.27, 32.87) --
    (217.60, 32.51) --
    (217.94, 32.16) --
    (218.31, 31.84) --
    (218.69, 31.54) --
    (219.10, 31.27) --
    (219.52, 31.02) --
    (219.95, 30.80) --
    (220.40, 30.60) --
    (220.86, 30.44) --
    (221.33, 30.30) --
    (221.81, 30.19) --
    (222.29, 30.12) --
    (222.78, 30.07) --
    (223.26, 30.06) --
    (223.75, 30.07) --
    (224.24, 30.12) --
    (224.72, 30.19) --
    (225.20, 30.30) --
    (225.67, 30.44) --
    (226.13, 30.60) --
    (226.57, 30.80) --
    (227.01, 31.02) --
    (227.43, 31.27) --
    (227.83, 31.54) --
    (228.22, 31.84) --
    (228.59, 32.16) --
    (228.93, 32.51) --
    (229.25, 32.87) --
    (229.55, 33.26) --
    (229.83, 33.66) --
    (230.08, 34.08) --
    (230.30, 34.52) --
    (230.49, 34.97) --
    (230.66, 35.43) --
    (230.79, 35.90) --
    (230.90, 36.37) --
    (230.98, 36.86) --
    (231.02, 37.34) --
    cycle;
    
    \node[text=drawColor,anchor=base,inner sep=0pt, outer sep=0pt, scale=  
    1.00] at (223.26, 34.62) {6};
    
    \path[draw=drawColor,line width= 0.4pt,line join=round,line 
    cap=round,fill=fillColor] (296.75, 37.83) --
    (296.73, 38.28) --
    (296.69, 38.73) --
    (296.62, 39.18) --
    (296.52, 39.62) --
    (296.40, 40.05) --
    (296.24, 40.48) --
    (296.06, 40.90) --
    (295.86, 41.30) --
    (295.63, 41.69) --
    (295.37, 42.06) --
    (295.10, 42.42) --
    (294.80, 42.76) --
    (294.48, 43.08) --
    (294.14, 43.38) --
    (293.78, 43.65) --
    (293.41, 43.91) --
    (293.02, 44.14) --
    (292.62, 44.34) --
    (292.20, 44.52) --
    (291.77, 44.68) --
    (291.34, 44.80) --
    (290.90, 44.90) --
    (290.45, 44.97) --
    (290.00, 45.01) --
    (289.55, 45.03) --
    (289.10, 45.01) --
    (288.65, 44.97) --
    (288.20, 44.90) --
    (287.76, 44.80) --
    (287.33, 44.68) --
    (286.90, 44.52) --
    (286.49, 44.34) --
    (286.08, 44.14) --
    (285.69, 43.91) --
    (285.32, 43.65) --
    (284.96, 43.38) --
    (284.62, 43.08) --
    (284.30, 42.76) --
    (284.00, 42.42) --
    (283.73, 42.06) --
    (283.47, 41.69) --
    (283.24, 41.30) --
    (283.04, 40.90) --
    (282.86, 40.48) --
    (282.70, 40.05) --
    (282.58, 39.62) --
    (282.48, 39.18) --
    (282.41, 38.73) --
    (282.37, 38.28) --
    (282.35, 37.83) --
    (282.37, 37.38) --
    (282.41, 36.93) --
    (282.48, 36.48) --
    (282.58, 36.04) --
    (282.70, 35.61) --
    (282.86, 35.18) --
    (283.04, 34.77) --
    (283.24, 34.36) --
    (283.47, 33.97) --
    (283.73, 33.60) --
    (284.00, 33.24) --
    (284.30, 32.90) --
    (284.62, 32.58) --
    (284.96, 32.28) --
    (285.32, 32.01) --
    (285.69, 31.75) --
    (286.08, 31.52) --
    (286.49, 31.32) --
    (286.90, 31.14) --
    (287.33, 30.98) --
    (287.76, 30.86) --
    (288.20, 30.76) --
    (288.65, 30.69) --
    (289.10, 30.65) --
    (289.55, 30.63) --
    (290.00, 30.65) --
    (290.45, 30.69) --
    (290.90, 30.76) --
    (291.34, 30.86) --
    (291.77, 30.98) --
    (292.20, 31.14) --
    (292.62, 31.32) --
    (293.02, 31.52) --
    (293.41, 31.75) --
    (293.78, 32.01) --
    (294.14, 32.28) --
    (294.48, 32.58) --
    (294.80, 32.90) --
    (295.10, 33.24) --
    (295.37, 33.60) --
    (295.63, 33.97) --
    (295.86, 34.36) --
    (296.06, 34.77) --
    (296.24, 35.18) --
    (296.40, 35.61) --
    (296.52, 36.04) --
    (296.62, 36.48) --
    (296.69, 36.93) --
    (296.73, 37.38) --
    cycle;
    
    \node[text=drawColor,anchor=base,inner sep=0pt, outer sep=0pt, scale=  
    1.00] at (289.55, 34.62) {7};
    
    \path[draw=drawColor,line width= 0.4pt,dash pattern=on 4pt off 4pt ,line 
    join=round,line cap=round] ( 12.04, 25.93) --
    ( 12.04, 49.73);
    
    \path[draw=drawColor,line width= 0.4pt,dash pattern=on 4pt off 4pt ,line 
    join=round,line cap=round] (313.17, 25.93) --
    (313.17, 49.73);
    
    \node[text=drawColor,anchor=base,inner sep=0pt, outer sep=0pt, scale=  
    1.00] at ( 12.04, 16.78) {0};
    
    \node[text=drawColor,anchor=base,inner sep=0pt, outer sep=0pt, scale=  
    1.00] at (313.17, 16.78) {1};
    \end{scope}
    \end{tikzpicture}\\
    \vspace{-10mm}
    {\bf initial state}\\[1mm]
    
    \begin{tikzpicture}[x=1.2pt,y=1.2pt]
    \definecolor{fillColor}{RGB}{255,255,255}
    \path[use as bounding box,fill=fillColor,fill opacity=0.00] (0,0) rectangle 
    (325.21, 57.82);
    \begin{scope}
    \path[clip] (  0.00,  0.00) rectangle (325.21, 57.82);
    \definecolor{drawColor}{RGB}{0,0,0}
    
    \path[draw=drawColor,line width= 0.4pt,line join=round,line cap=round] (  
    0.00, 37.83) -- (325.21, 37.83);
    \definecolor{fillColor}{RGB}{255,255,255}
    
    \path[draw=drawColor,line width= 0.4pt,line join=round,line 
    cap=round,fill=fillColor] ( 89.16, 37.83) --
    ( 89.12, 39.03) --
    ( 89.01, 40.22) --
    ( 88.82, 41.40) --
    ( 88.56, 42.57) --
    ( 88.23, 43.72) --
    ( 87.82, 44.84) --
    ( 87.35, 45.94) --
    ( 86.80, 47.01) --
    ( 86.19, 48.03) --
    ( 85.52, 49.02) --
    ( 84.79, 49.97) --
    ( 84.00, 50.87) --
    ( 83.15, 51.71) --
    ( 82.25, 52.50) --
    ( 81.31, 53.24) --
    ( 80.32, 53.91) --
    ( 79.29, 54.52) --
    ( 78.22, 55.06) --
    ( 77.12, 55.54) --
    ( 76.00, 55.94) --
    ( 74.85, 56.28) --
    ( 73.68, 56.54) --
    ( 72.50, 56.72) --
    ( 71.31, 56.84) --
    ( 70.11, 56.88) --
    ( 68.92, 56.84) --
    ( 67.73, 56.72) --
    ( 66.54, 56.54) --
    ( 65.38, 56.28) --
    ( 64.23, 55.94) --
    ( 63.10, 55.54) --
    ( 62.00, 55.06) --
    ( 60.94, 54.52) --
    ( 59.91, 53.91) --
    ( 58.92, 53.24) --
    ( 57.97, 52.50) --
    ( 57.08, 51.71) --
    ( 56.23, 50.87) --
    ( 55.44, 49.97) --
    ( 54.71, 49.02) --
    ( 54.03, 48.03) --
    ( 53.42, 47.01) --
    ( 52.88, 45.94) --
    ( 52.41, 44.84) --
    ( 52.00, 43.72) --
    ( 51.67, 42.57) --
    ( 51.41, 41.40) --
    ( 51.22, 40.22) --
    ( 51.11, 39.03) --
    ( 51.07, 37.83) --
    ( 51.11, 36.63) --
    ( 51.22, 35.44) --
    ( 51.41, 34.26) --
    ( 51.67, 33.09) --
    ( 52.00, 31.95) --
    ( 52.41, 30.82) --
    ( 52.88, 29.72) --
    ( 53.42, 28.66) --
    ( 54.03, 27.63) --
    ( 54.71, 26.64) --
    ( 55.44, 25.69) --
    ( 56.23, 24.79) --
    ( 57.08, 23.95) --
    ( 57.97, 23.16) --
    ( 58.92, 22.42) --
    ( 59.91, 21.75) --
    ( 60.94, 21.14) --
    ( 62.00, 20.60) --
    ( 63.10, 20.12) --
    ( 64.23, 19.72) --
    ( 65.38, 19.38) --
    ( 66.54, 19.12) --
    ( 67.73, 18.94) --
    ( 68.92, 18.82) --
    ( 70.11, 18.79) --
    ( 71.31, 18.82) --
    ( 72.50, 18.94) --
    ( 73.68, 19.12) --
    ( 74.85, 19.38) --
    ( 76.00, 19.72) --
    ( 77.12, 20.12) --
    ( 78.22, 20.60) --
    ( 79.29, 21.14) --
    ( 80.32, 21.75) --
    ( 81.31, 22.42) --
    ( 82.25, 23.16) --
    ( 83.15, 23.95) --
    ( 84.00, 24.79) --
    ( 84.79, 25.69) --
    ( 85.52, 26.64) --
    ( 86.19, 27.63) --
    ( 86.80, 28.66) --
    ( 87.35, 29.72) --
    ( 87.82, 30.82) --
    ( 88.23, 31.95) --
    ( 88.56, 33.09) --
    ( 88.82, 34.26) --
    ( 89.01, 35.44) --
    ( 89.12, 36.63) --
    cycle;
    
    \node[text=drawColor,anchor=base,inner sep=0pt, outer sep=0pt, scale=  
    1.00] at ( 70.11, 34.62) {1};
    
    \path[draw=drawColor,line width= 0.4pt,line join=round,line 
    cap=round,fill=fillColor] (170.68, 37.83) --
    (170.66, 38.68) --
    (170.58, 39.52) --
    (170.44, 40.35) --
    (170.26, 41.18) --
    (170.02, 41.99) --
    (169.74, 42.79) --
    (169.40, 43.56) --
    (169.02, 44.32) --
    (168.59, 45.05) --
    (168.11, 45.75) --
    (167.59, 46.41) --
    (167.03, 47.05) --
    (166.43, 47.65) --
    (165.80, 48.21) --
    (165.13, 48.73) --
    (164.43, 49.20) --
    (163.70, 49.63) --
    (162.95, 50.02) --
    (162.17, 50.35) --
    (161.38, 50.64) --
    (160.56, 50.87) --
    (159.74, 51.06) --
    (158.90, 51.19) --
    (158.06, 51.27) --
    (157.22, 51.30) --
    (156.37, 51.27) --
    (155.53, 51.19) --
    (154.69, 51.06) --
    (153.87, 50.87) --
    (153.05, 50.64) --
    (152.26, 50.35) --
    (151.48, 50.02) --
    (150.73, 49.63) --
    (150.00, 49.20) --
    (149.30, 48.73) --
    (148.63, 48.21) --
    (148.00, 47.65) --
    (147.40, 47.05) --
    (146.84, 46.41) --
    (146.32, 45.75) --
    (145.85, 45.05) --
    (145.41, 44.32) --
    (145.03, 43.56) --
    (144.69, 42.79) --
    (144.41, 41.99) --
    (144.17, 41.18) --
    (143.99, 40.35) --
    (143.85, 39.52) --
    (143.78, 38.68) --
    (143.75, 37.83) --
    (143.78, 36.98) --
    (143.85, 36.14) --
    (143.99, 35.31) --
    (144.17, 34.48) --
    (144.41, 33.67) --
    (144.69, 32.87) --
    (145.03, 32.10) --
    (145.41, 31.34) --
    (145.85, 30.61) --
    (146.32, 29.91) --
    (146.84, 29.25) --
    (147.40, 28.61) --
    (148.00, 28.01) --
    (148.63, 27.45) --
    (149.30, 26.94) --
    (150.00, 26.46) --
    (150.73, 26.03) --
    (151.48, 25.65) --
    (152.26, 25.31) --
    (153.05, 25.02) --
    (153.87, 24.79) --
    (154.69, 24.60) --
    (155.53, 24.47) --
    (156.37, 24.39) --
    (157.22, 24.36) --
    (158.06, 24.39) --
    (158.90, 24.47) --
    (159.74, 24.60) --
    (160.56, 24.79) --
    (161.38, 25.02) --
    (162.17, 25.31) --
    (162.95, 25.65) --
    (163.70, 26.03) --
    (164.43, 26.46) --
    (165.13, 26.94) --
    (165.80, 27.45) --
    (166.43, 28.01) --
    (167.03, 28.61) --
    (167.59, 29.25) --
    (168.11, 29.91) --
    (168.59, 30.61) --
    (169.02, 31.34) --
    (169.40, 32.10) --
    (169.74, 32.87) --
    (170.02, 33.67) --
    (170.26, 34.48) --
    (170.44, 35.31) --
    (170.58, 36.14) --
    (170.66, 36.98) --
    cycle;
    
    \node[text=drawColor,anchor=base,inner sep=0pt, outer sep=0pt, scale=  
    1.00] at (157.22, 34.62) {2};
    
    \path[draw=drawColor,line width= 0.4pt,line join=round,line 
    cap=round,fill=fillColor] (216.60, 37.83) --
    (216.58, 38.52) --
    (216.51, 39.21) --
    (216.41, 39.89) --
    (216.26, 40.56) --
    (216.06, 41.23) --
    (215.83, 41.88) --
    (215.55, 42.51) --
    (215.24, 43.13) --
    (214.89, 43.72) --
    (214.50, 44.29) --
    (214.08, 44.84) --
    (213.62, 45.36) --
    (213.13, 45.85) --
    (212.61, 46.30) --
    (212.07, 46.73) --
    (211.50, 47.11) --
    (210.90, 47.47) --
    (210.29, 47.78) --
    (209.65, 48.05) --
    (209.00, 48.29) --
    (208.34, 48.48) --
    (207.67, 48.63) --
    (206.98, 48.74) --
    (206.30, 48.80) --
    (205.61, 48.83) --
    (204.92, 48.80) --
    (204.23, 48.74) --
    (203.55, 48.63) --
    (202.87, 48.48) --
    (202.21, 48.29) --
    (201.56, 48.05) --
    (200.92, 47.78) --
    (200.31, 47.47) --
    (199.71, 47.11) --
    (199.14, 46.73) --
    (198.60, 46.30) --
    (198.08, 45.85) --
    (197.59, 45.36) --
    (197.13, 44.84) --
    (196.71, 44.29) --
    (196.32, 43.72) --
    (195.97, 43.13) --
    (195.66, 42.51) --
    (195.38, 41.88) --
    (195.15, 41.23) --
    (194.96, 40.56) --
    (194.80, 39.89) --
    (194.70, 39.21) --
    (194.63, 38.52) --
    (194.61, 37.83) --
    (194.63, 37.14) --
    (194.70, 36.45) --
    (194.80, 35.77) --
    (194.96, 35.10) --
    (195.15, 34.43) --
    (195.38, 33.78) --
    (195.66, 33.15) --
    (195.97, 32.53) --
    (196.32, 31.94) --
    (196.71, 31.37) --
    (197.13, 30.82) --
    (197.59, 30.30) --
    (198.08, 29.81) --
    (198.60, 29.36) --
    (199.14, 28.93) --
    (199.71, 28.55) --
    (200.31, 28.19) --
    (200.92, 27.88) --
    (201.56, 27.61) --
    (202.21, 27.37) --
    (202.87, 27.18) --
    (203.55, 27.03) --
    (204.23, 26.92) --
    (204.92, 26.86) --
    (205.61, 26.83) --
    (206.30, 26.86) --
    (206.98, 26.92) --
    (207.67, 27.03) --
    (208.34, 27.18) --
    (209.00, 27.37) --
    (209.65, 27.61) --
    (210.29, 27.88) --
    (210.90, 28.19) --
    (211.50, 28.55) --
    (212.07, 28.93) --
    (212.61, 29.36) --
    (213.13, 29.81) --
    (213.62, 30.30) --
    (214.08, 30.82) --
    (214.50, 31.37) --
    (214.89, 31.94) --
    (215.24, 32.53) --
    (215.55, 33.15) --
    (215.83, 33.78) --
    (216.06, 34.43) --
    (216.26, 35.10) --
    (216.41, 35.77) --
    (216.51, 36.45) --
    (216.58, 37.14) --
    cycle;
    
    \node[text=drawColor,anchor=base,inner sep=0pt, outer sep=0pt, scale=  
    1.00] at (205.61, 34.62) {3};
    
    \path[draw=drawColor,line width= 0.4pt,line join=round,line 
    cap=round,fill=fillColor] (249.00, 37.83) --
    (248.98, 38.43) --
    (248.93, 39.02) --
    (248.83, 39.61) --
    (248.70, 40.20) --
    (248.54, 40.77) --
    (248.33, 41.34) --
    (248.09, 41.88) --
    (247.82, 42.42) --
    (247.52, 42.93) --
    (247.18, 43.43) --
    (246.82, 43.90) --
    (246.42, 44.35) --
    (246.00, 44.77) --
    (245.55, 45.17) --
    (245.08, 45.53) --
    (244.58, 45.87) --
    (244.07, 46.17) --
    (243.53, 46.45) --
    (242.98, 46.68) --
    (242.42, 46.89) --
    (241.85, 47.05) --
    (241.26, 47.18) --
    (240.67, 47.28) --
    (240.08, 47.33) --
    (239.48, 47.35) --
    (238.88, 47.33) --
    (238.29, 47.28) --
    (237.69, 47.18) --
    (237.11, 47.05) --
    (236.54, 46.89) --
    (235.97, 46.68) --
    (235.42, 46.45) --
    (234.89, 46.17) --
    (234.38, 45.87) --
    (233.88, 45.53) --
    (233.41, 45.17) --
    (232.96, 44.77) --
    (232.54, 44.35) --
    (232.14, 43.90) --
    (231.77, 43.43) --
    (231.44, 42.93) --
    (231.13, 42.42) --
    (230.86, 41.88) --
    (230.63, 41.34) --
    (230.42, 40.77) --
    (230.26, 40.20) --
    (230.12, 39.61) --
    (230.03, 39.02) --
    (229.98, 38.43) --
    (229.96, 37.83) --
    (229.98, 37.23) --
    (230.03, 36.64) --
    (230.12, 36.05) --
    (230.26, 35.46) --
    (230.42, 34.89) --
    (230.63, 34.32) --
    (230.86, 33.78) --
    (231.13, 33.24) --
    (231.44, 32.73) --
    (231.77, 32.23) --
    (232.14, 31.76) --
    (232.54, 31.31) --
    (232.96, 30.89) --
    (233.41, 30.49) --
    (233.88, 30.13) --
    (234.38, 29.79) --
    (234.89, 29.49) --
    (235.42, 29.21) --
    (235.97, 28.98) --
    (236.54, 28.77) --
    (237.11, 28.61) --
    (237.69, 28.48) --
    (238.29, 28.38) --
    (238.88, 28.33) --
    (239.48, 28.31) --
    (240.08, 28.33) --
    (240.67, 28.38) --
    (241.26, 28.48) --
    (241.85, 28.61) --
    (242.42, 28.77) --
    (242.98, 28.98) --
    (243.53, 29.21) --
    (244.07, 29.49) --
    (244.58, 29.79) --
    (245.08, 30.13) --
    (245.55, 30.49) --
    (246.00, 30.89) --
    (246.42, 31.31) --
    (246.82, 31.76) --
    (247.18, 32.23) --
    (247.52, 32.73) --
    (247.82, 33.24) --
    (248.09, 33.78) --
    (248.33, 34.32) --
    (248.54, 34.89) --
    (248.70, 35.46) --
    (248.83, 36.05) --
    (248.93, 36.64) --
    (248.98, 37.23) --
    cycle;
    
    \node[text=drawColor,anchor=base,inner sep=0pt, outer sep=0pt, scale=  
    1.00] at (239.48, 34.62) {4};
    
    \path[draw=drawColor,line width= 0.4pt,line join=round,line 
    cap=round,fill=fillColor] (274.13, 37.83) --
    (274.11, 38.37) --
    (274.06, 38.90) --
    (273.98, 39.43) --
    (273.86, 39.95) --
    (273.71, 40.46) --
    (273.53, 40.97) --
    (273.32, 41.46) --
    (273.07, 41.93) --
    (272.80, 42.39) --
    (272.50, 42.84) --
    (272.17, 43.26) --
    (271.82, 43.66) --
    (271.44, 44.04) --
    (271.04, 44.39) --
    (270.62, 44.72) --
    (270.17, 45.02) --
    (269.71, 45.29) --
    (269.24, 45.54) --
    (268.74, 45.75) --
    (268.24, 45.93) --
    (267.73, 46.08) --
    (267.21, 46.20) --
    (266.68, 46.28) --
    (266.14, 46.33) --
    (265.61, 46.35) --
    (265.07, 46.33) --
    (264.54, 46.28) --
    (264.01, 46.20) --
    (263.49, 46.08) --
    (262.98, 45.93) --
    (262.47, 45.75) --
    (261.98, 45.54) --
    (261.51, 45.29) --
    (261.05, 45.02) --
    (260.60, 44.72) --
    (260.18, 44.39) --
    (259.78, 44.04) --
    (259.40, 43.66) --
    (259.05, 43.26) --
    (258.72, 42.84) --
    (258.42, 42.39) --
    (258.15, 41.93) --
    (257.90, 41.46) --
    (257.69, 40.97) --
    (257.51, 40.46) --
    (257.36, 39.95) --
    (257.24, 39.43) --
    (257.16, 38.90) --
    (257.11, 38.37) --
    (257.09, 37.83) --
    (257.11, 37.30) --
    (257.16, 36.76) --
    (257.24, 36.23) --
    (257.36, 35.71) --
    (257.51, 35.20) --
    (257.69, 34.69) --
    (257.90, 34.20) --
    (258.15, 33.73) --
    (258.42, 33.27) --
    (258.72, 32.82) --
    (259.05, 32.40) --
    (259.40, 32.00) --
    (259.78, 31.62) --
    (260.18, 31.27) --
    (260.60, 30.94) --
    (261.05, 30.64) --
    (261.51, 30.37) --
    (261.98, 30.12) --
    (262.47, 29.91) --
    (262.98, 29.73) --
    (263.49, 29.58) --
    (264.01, 29.46) --
    (264.54, 29.38) --
    (265.07, 29.33) --
    (265.61, 29.31) --
    (266.14, 29.33) --
    (266.68, 29.38) --
    (267.21, 29.46) --
    (267.73, 29.58) --
    (268.24, 29.73) --
    (268.74, 29.91) --
    (269.24, 30.12) --
    (269.71, 30.37) --
    (270.17, 30.64) --
    (270.62, 30.94) --
    (271.04, 31.27) --
    (271.44, 31.62) --
    (271.82, 32.00) --
    (272.17, 32.40) --
    (272.50, 32.82) --
    (272.80, 33.27) --
    (273.07, 33.73) --
    (273.32, 34.20) --
    (273.53, 34.69) --
    (273.71, 35.20) --
    (273.86, 35.71) --
    (273.98, 36.23) --
    (274.06, 36.76) --
    (274.11, 37.30) --
    cycle;
    
    \node[text=drawColor,anchor=base,inner sep=0pt, outer sep=0pt, scale=  
    1.00] at (265.61, 34.62) {5};
    
    \path[draw=drawColor,line width= 0.4pt,line join=round,line 
    cap=round,fill=fillColor] (294.68, 37.83) --
    (294.66, 38.32) --
    (294.61, 38.80) --
    (294.54, 39.29) --
    (294.43, 39.76) --
    (294.30, 40.23) --
    (294.13, 40.69) --
    (293.94, 41.14) --
    (293.71, 41.58) --
    (293.47, 42.00) --
    (293.19, 42.40) --
    (292.89, 42.79) --
    (292.57, 43.15) --
    (292.22, 43.50) --
    (291.86, 43.82) --
    (291.47, 44.12) --
    (291.07, 44.39) --
    (290.65, 44.64) --
    (290.21, 44.87) --
    (289.76, 45.06) --
    (289.30, 45.22) --
    (288.83, 45.36) --
    (288.36, 45.47) --
    (287.88, 45.54) --
    (287.39, 45.59) --
    (286.90, 45.61) --
    (286.41, 45.59) --
    (285.93, 45.54) --
    (285.44, 45.47) --
    (284.97, 45.36) --
    (284.50, 45.22) --
    (284.04, 45.06) --
    (283.59, 44.87) --
    (283.16, 44.64) --
    (282.73, 44.39) --
    (282.33, 44.12) --
    (281.95, 43.82) --
    (281.58, 43.50) --
    (281.23, 43.15) --
    (280.91, 42.79) --
    (280.61, 42.40) --
    (280.34, 42.00) --
    (280.09, 41.58) --
    (279.87, 41.14) --
    (279.67, 40.69) --
    (279.51, 40.23) --
    (279.37, 39.76) --
    (279.26, 39.29) --
    (279.19, 38.80) --
    (279.14, 38.32) --
    (279.13, 37.83) --
    (279.14, 37.34) --
    (279.19, 36.86) --
    (279.26, 36.37) --
    (279.37, 35.90) --
    (279.51, 35.43) --
    (279.67, 34.97) --
    (279.87, 34.52) --
    (280.09, 34.08) --
    (280.34, 33.66) --
    (280.61, 33.26) --
    (280.91, 32.87) --
    (281.23, 32.51) --
    (281.58, 32.16) --
    (281.95, 31.84) --
    (282.33, 31.54) --
    (282.73, 31.27) --
    (283.16, 31.02) --
    (283.59, 30.80) --
    (284.04, 30.60) --
    (284.50, 30.44) --
    (284.97, 30.30) --
    (285.44, 30.19) --
    (285.93, 30.12) --
    (286.41, 30.07) --
    (286.90, 30.06) --
    (287.39, 30.07) --
    (287.88, 30.12) --
    (288.36, 30.19) --
    (288.83, 30.30) --
    (289.30, 30.44) --
    (289.76, 30.60) --
    (290.21, 30.80) --
    (290.65, 31.02) --
    (291.07, 31.27) --
    (291.47, 31.54) --
    (291.86, 31.84) --
    (292.22, 32.16) --
    (292.57, 32.51) --
    (292.89, 32.87) --
    (293.19, 33.26) --
    (293.47, 33.66) --
    (293.71, 34.08) --
    (293.94, 34.52) --
    (294.13, 34.97) --
    (294.30, 35.43) --
    (294.43, 35.90) --
    (294.54, 36.37) --
    (294.61, 36.86) --
    (294.66, 37.34) --
    cycle;
    
    \node[text=drawColor,anchor=base,inner sep=0pt, outer sep=0pt, scale=  
    1.00] at (286.90, 34.62) {6};
    
    \path[draw=drawColor,line width= 0.4pt,line join=round,line 
    cap=round,fill=fillColor] (312.07, 37.83) --
    (312.06, 38.28) --
    (312.02, 38.73) --
    (311.95, 39.18) --
    (311.85, 39.62) --
    (311.72, 40.05) --
    (311.57, 40.48) --
    (311.39, 40.90) --
    (311.18, 41.30) --
    (310.95, 41.69) --
    (310.70, 42.06) --
    (310.42, 42.42) --
    (310.12, 42.76) --
    (309.80, 43.08) --
    (309.46, 43.38) --
    (309.11, 43.65) --
    (308.73, 43.91) --
    (308.34, 44.14) --
    (307.94, 44.34) --
    (307.52, 44.52) --
    (307.10, 44.68) --
    (306.66, 44.80) --
    (306.22, 44.90) --
    (305.78, 44.97) --
    (305.33, 45.01) --
    (304.87, 45.03) --
    (304.42, 45.01) --
    (303.97, 44.97) --
    (303.53, 44.90) --
    (303.08, 44.80) --
    (302.65, 44.68) --
    (302.22, 44.52) --
    (301.81, 44.34) --
    (301.41, 44.14) --
    (301.02, 43.91) --
    (300.64, 43.65) --
    (300.29, 43.38) --
    (299.95, 43.08) --
    (299.63, 42.76) --
    (299.33, 42.42) --
    (299.05, 42.06) --
    (298.80, 41.69) --
    (298.57, 41.30) --
    (298.36, 40.90) --
    (298.18, 40.48) --
    (298.03, 40.05) --
    (297.90, 39.62) --
    (297.80, 39.18) --
    (297.73, 38.73) --
    (297.69, 38.28) --
    (297.68, 37.83) --
    (297.69, 37.38) --
    (297.73, 36.93) --
    (297.80, 36.48) --
    (297.90, 36.04) --
    (298.03, 35.61) --
    (298.18, 35.18) --
    (298.36, 34.77) --
    (298.57, 34.36) --
    (298.80, 33.97) --
    (299.05, 33.60) --
    (299.33, 33.24) --
    (299.63, 32.90) --
    (299.95, 32.58) --
    (300.29, 32.28) --
    (300.64, 32.01) --
    (301.02, 31.75) --
    (301.41, 31.52) --
    (301.81, 31.32) --
    (302.22, 31.14) --
    (302.65, 30.98) --
    (303.08, 30.86) --
    (303.53, 30.76) --
    (303.97, 30.69) --
    (304.42, 30.65) --
    (304.87, 30.63) --
    (305.33, 30.65) --
    (305.78, 30.69) --
    (306.22, 30.76) --
    (306.66, 30.86) --
    (307.10, 30.98) --
    (307.52, 31.14) --
    (307.94, 31.32) --
    (308.34, 31.52) --
    (308.73, 31.75) --
    (309.11, 32.01) --
    (309.46, 32.28) --
    (309.80, 32.58) --
    (310.12, 32.90) --
    (310.42, 33.24) --
    (310.70, 33.60) --
    (310.95, 33.97) --
    (311.18, 34.36) --
    (311.39, 34.77) --
    (311.57, 35.18) --
    (311.72, 35.61) --
    (311.85, 36.04) --
    (311.95, 36.48) --
    (312.02, 36.93) --
    (312.06, 37.38) --
    cycle;
    
    \node[text=drawColor,anchor=base,inner sep=0pt, outer sep=0pt, scale=  
    1.00] at (304.87, 34.62) {7};
    
    \path[draw=drawColor,line width= 0.4pt,dash pattern=on 4pt off 4pt ,line 
    join=round,line cap=round] ( 12.04, 25.93) --
    ( 12.04, 49.73);
    
    \path[draw=drawColor,line width= 0.4pt,dash pattern=on 4pt off 4pt ,line 
    join=round,line cap=round] (313.17, 25.93) --
    (313.17, 49.73);
    
    \node[text=drawColor,anchor=base,inner sep=0pt, outer sep=0pt, scale=  
    1.00] at ( 12.04, 16.78) {0};
    
    \node[text=drawColor,anchor=base,inner sep=0pt, outer sep=0pt, scale=  
    1.00] at (313.17, 16.78) {1};
    \end{scope}
    \end{tikzpicture}\\
    \vspace{-9mm}
    {\bf steady state}
    
    \caption{Application of the global model to a system of 7 particles with 
    $\tilde{w}_{i,k} = \nicefrac{1}{k}$ for $1 \leq i,k \leq N$.}
    \label{fig:swarm-weights}
\end{figure*}
%
% ------------------------------------------------------------------------------
%
Additionally, we present an analytic expression for the steady state of the 
global model given a penaliser function $\varPsi = \varPsi_{a,n}$ (cf.  
\Tref{tab:class_psi}) with $n=1$.
\begin{theorem}[Analytic Steady-State Solution for $\varPsi = \varPsi_{a,n=1}$]
Given $N$ distinct positions $v_i$ in increasing order and a penaliser function 
$\varPsi = \varPsi_{a,n=1}$, the unique minimiser of \eref{eq:model_energy} is 
given by
\begin{equation}
\label{eq:steady_state_linear}
v_i^* = 
\frac{\sum \limits_{j = 1}^i \tilde{w}_{i,j} - \frac12 
\tilde{w}_{i,i}}{\sum \limits_{j = 1}^N \tilde{w}_{i,j}}
\enspace, \qquad i = 1,\ldots,N .
\end{equation}
\end{theorem}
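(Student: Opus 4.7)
The plan is to substitute the special form of the flux, $\varPhi_{a,1}(s)=a\,(s-1)$ on $(0,2)$, directly into the steady-state form of \eref{eq:model_gradient_descent_redundant_alternative} and then solve the resulting linear equations row by row. I prefer working with \eref{eq:model_gradient_descent_redundant_alternative} rather than \eref{eq:steady_state_condition}, because the compact expression $(v_j^*-v_i^*-1)^{2n-1}$ in the latter is really only valid where $v_j^*-v_i^*\in(0,2)$, whereas in our global setup the differences $v_j^*-v_i^*$ may be either positive or negative.

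By the hypothesis that the $v_i$ are in increasing order, by Theorem \ref{thm:nonequality} and by the rank-order preservation it guarantees, the steady state satisfies $v_1^*<v_2^*<\dots<v_N^*$ with all $v_i^*\in(0,1)$ (cf.\ also Theorem \ref{thm:boundaries}). Hence $J_2^i=\{1,\ldots,N\}\setminus\{i\}$ and $J_3^i=\{i\}$. I would then evaluate each occurrence of $\varPhi$ piece-wise: for $j>i$ one has $v_j^*-v_i^*\in(0,1)$ and thus $\varPhi(v_j^*-v_i^*)=a\,(v_j^*-v_i^*-1)$; for $j<i$ one has $v_j^*-v_i^*\in(-1,0)$, and the oddness together with the 2-periodicity of $\varPhi$ yield $\varPhi(v_j^*-v_i^*)=a\,(v_j^*-v_i^*+1)$. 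For every $j$, the sum $v_j^*+v_i^*$ lies in $(0,2)$, so $\varPhi(v_j^*+v_i^*)=a\,(v_j^*+v_i^*-1)$ and likewise $\varPhi(2v_i^*)=a\,(2v_i^*-1)$.

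Plugging these expressions into \eref{eq:model_gradient_descent_redundant_alternative}, the $v_j^*$ terms cancel between $\varPhi(v_j^*-v_i^*)$ and $\varPhi(v_j^*+v_i^*)$, and the equation collapses to a single linear relation in $v_i^*$:
\begin{equation*}
0 \;=\; -2v_i^*\sum_{j=1}^N\tilde w_{i,j} \;+\; 2\sum_{j<i}\tilde w_{i,j} \;+\; \tilde w_{i,i}.
\end{equation*}
Solving for $v_i^*$ and rewriting $\sum_{j<i}\tilde w_{i,j}+\tfrac12\tilde w_{i,i}=\sum_{j=1}^i\tilde w_{i,j}-\tfrac12\tilde w_{i,i}$ delivers exactly \eref{eq:steady_state_linear}. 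Uniqueness of this critical point as the global minimizer is inherited from Theorem \ref{thm:convergence_n_one}. A brief sanity check is then in order: since $\tilde w_{i,i}>0$ and all other $\tilde w_{i,j}\ge0$, the numerator lies strictly between $\tfrac12\tilde w_{i,i}$ and $\sum_{j=1}^N\tilde w_{i,j}-\tfrac12\tilde w_{i,i}$, so $v_i^*\in(0,1)$, consistent with the range preservation. The main (and essentially only) subtlety is the case split on the sign of $v_j^*-v_i^*$ when evaluating $\varPhi$; once the ordering is fixed, the remainder is a routine linear computation.
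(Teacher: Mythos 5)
Your proof is correct and takes essentially the same route as the paper, whose entire argument is that the formula ``follows directly from'' \eqref{eq:steady_state_condition}: your explicit case split on the sign of $v_j^*-v_i^*$ (equivalently, evaluating $\varPhi$ via its odd, $2$-periodic extension) is exactly the computation the paper leaves implicit, and it reproduces \eqref{eq:steady_state_linear}. The only minor caveat is that your appeal to Theorem~\ref{thm:nonequality} to justify the ordering at the steady state technically requires the constant-column hypothesis on $\bm{\tilde{W}}$, but since the theorem statement already presupposes the increasing order of the $v_i^*$, this does not affect the derivation.
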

\begin{proof}
The presented minimiser follows directly from \eref{eq:steady_state_condition}. 
\Fref{fig:swarm-weights} provides an illustration of the steady state. \qed
\end{proof}
%
% ------------------------------------------------------------------------------
%
Finally, and in case all entries of the weight matrix $\bm{\tilde{W}}$ are set 
to $1$, we show that the global model converges -- independently of 
$\varPsi$ -- to a unique steady state:
\begin{theorem}[Convergence for $\bm{\tilde{W}} = 
\mathbf{1}\mathbf{1}\transpose$]
\label{thm:convergence_matrix_ones}
Given that $\bm{\tilde{W}} = \mathbf{1}\mathbf{1}\transpose$, any initial 
configuration $\bm{v} \in (0,1)^N$ with distinct entries converges to a unique 
steady state $\bm{v}^*$ for $t \to \infty$. This is the global minimiser of the 
energy given 
in \eref{eq:model_energy_red}.
\end{theorem}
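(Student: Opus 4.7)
The plan is to reuse the Lyapunov argument from Theorem~\ref{thm:convergence_n_one} but to supply its two ingredients from the previous theorem rather than from the special form $\varPsi=\varPsi_{a,1}$. Concretely, I take $V(\bm{v},\bm{\tilde{W}}):=E(\bm{v},\bm{\tilde{W}})-E(\bm{v}^*,\bm{\tilde{W}})$, where $\bm{v}^*$ is the candidate steady state provided by Theorem \ref{thm:solution_ones}, namely $v_i^*=(\sigma(i)-\nicefrac12)/N$ for the permutation $\sigma$ that sorts the initial data. The goal is to show that $V$ is a strict Lyapunov function for the gradient descent \eref{eq:model_gradient_descent_redundant} on the invariant set $(0,1)^N$.

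First I would collect the invariance facts: Theorem \ref{thm:boundaries} keeps every $v_i$ inside $(0,1)$, and Theorem \ref{thm:nonequality} (whose hypothesis $\tilde{w}_{j,k}=\tilde{w}_{i,k}>0$ is obviously satisfied by $\bm{\tilde{W}}=\mathbf{1}\mathbf{1}\transpose$) preserves distinctness of the $v_i$ and hence their rank-order. Therefore the flow lives in the open, bounded set $\Omega:=\{\bm{v}\in(0,1)^N : v_{\sigma^{-1}(1)}<\dots<v_{\sigma^{-1}(N)}\}$ where $\varPhi$ is smooth, and after relabelling with $\sigma$ we may assume the entries of $\bm{v}(t)$ stay increasing, so that $\bm{v}^*=((i-\nicefrac12)/N)_{i=1}^N$ as in Theorem \ref{thm:solution_ones}.

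Next I would establish the three Lyapunov properties on $\Omega$. Positivity $V(\bm{v})>0$ for $\bm{v}\neq\bm{v}^*$ and $V(\bm{v}^*)=0$ comes from the fact that $\bm{v}^*$ is a \emph{strict} minimiser of $E$ on $\Omega$: Theorem \ref{thm:solution_ones} shows $\bm{\nabla}E(\bm{v}^*)=0$ together with $\mathrm{D}^2E(\bm{v}^*)$ positive definite, and $E$ is convex on $\Omega$ because each summand $\varPsi((v_j\pm v_i)^2)$ is a convex function of $\bm{v}$ on the branch where $v_j\pm v_i\in(0,2)$ (strict convexity of $\varPsi(s^2)$ on $[0,1]$ combined with periodicity/symmetry). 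A strictly convex critical point of a convex function on a convex open set is the unique minimiser globally. The decrease property follows by the chain rule exactly as in \eref{eq:lyapunov_n_one}:
\begin{equation}
\partial_tV(\bm{v},\bm{\tilde{W}})=\bm{\nabla}E(\bm{v},\bm{\tilde{W}})\cdot\dot{\bm{v}}=-\|\bm{\nabla}E(\bm{v},\bm{\tilde{W}})\|^2\le 0,
\end{equation}
and since $\bm{v}^*$ is the only critical point of $E$ in $\Omega$, this inequality is strict whenever $\bm{v}\neq\bm{v}^*$.

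Finally, asymptotic stability of $\bm{v}^*$ follows from the classical Lyapunov theorem \cite{LY92}, exactly as in the proof of Theorem \ref{thm:convergence_n_one}; together with the boundedness of trajectories in $\Omega$ (precompactness), this yields $\bm{v}(t)\to\bm{v}^*$ as $t\to\infty$. The main obstacle is the global convexity step: we have local positive definiteness from Theorem \ref{thm:solution_ones}, but to upgrade this into a strict Lyapunov argument we need either global convexity of $E$ on $\Omega$ (which must be argued from the assumed convexity properties of $\varPsi$) or, as a fallback, a LaSalle invariance argument using the uniqueness of the critical point together with the fact that the bounded $\omega$-limit set of any trajectory is nonempty and contained in $\{\bm{\nabla}E=0\}=\{\bm{v}^*\}$. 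Either route closes the proof.
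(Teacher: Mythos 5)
Your proposal takes essentially the same route as the paper: the published proof simply reuses the Lyapunov function $V=E(\bm{v},\bm{\tilde{W}})-E(\bm{v}^*,\bm{\tilde{W}})$ and the decrease relation \eref{eq:lyapunov_n_one} from Theorem~\ref{thm:convergence_n_one}, and gets strictness of the decrease (hence asymptotic Lyapunov stability and convergence) from the positive definiteness of the Hessian \eref{eq:hesse_energy} computed in Theorem~\ref{thm:solution_ones}. Your extra bookkeeping -- invariance of the ordered configuration set via Theorems~\ref{thm:boundaries} and~\ref{thm:nonequality}, convexity of $E$ on the fixed-order cone to promote the strict local minimum to the unique critical point, and the LaSalle fallback -- is sound and merely makes explicit steps the paper leaves implicit.
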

\begin{proof}
Using the same reasoning as in the proof for Theorem 
\ref{thm:convergence_n_one} we know that inequality 
\eref{eq:lyapunov_n_one} holds.
Due to the positive definiteness of \eref{eq:hesse_energy} it follows
that $E(\bm{v},\bm{\tilde{W}})$ has a strict (global) minimum which implies that
the inequality in 
\eref{eq:lyapunov_n_one}
becomes strict except in case 
of $\bm{v} = \bm{v}^*$. This guarantees asymptotic Lyapunov stability of 
$\bm{v}^*$ and thus convergence to $\bm{v}^*$ for $t \to \infty$.
\end{proof}
%
% ------------------------------------------------------------------------------
%
\subsection{Relation to Variational Signal and Image Filtering}
\label{sec-theo-sig}
Let us now interpret $v_1,\ldots,v_N$ as samples of a smooth 1D signal
$u:\Omega\to[0,1]$ over an interval $\Omega$ of the real axis,
taken at sampling positions $x_i=x_0+i\,h$ with grid mesh size $h>0$.
We consider the model \eref{eq:model_energy} with  
$w_{i,j}:=\gamma(|x_j-x_i|)$, where $\gamma: 
\mathbb{R}_0^+ \to [0,1]$ is a non-increasing weighting function with compact 
support $[0,\varrho)$.
\begin{theorem}[Space-Continuous Energy]\label{thm-sce}
\Eref{eq:model_energy}
can be considered as a discretisation of
\begin{equation}
\label{energywithbarrier}
E[u] = \frac12 \int\limits_\Omega \bigl(W (u_x^2) + B (u)\bigr)\,\mathrm{d}x
\end{equation}
with penaliser $W(u_x^2)\approx C\,\varPsi(u_x^2)$ and barrier function
$B(u)\approx D\,\varPsi(4u^2)$, where $C$ and $D$ are positive constants.
\end{theorem}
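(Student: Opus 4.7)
The plan is to substitute $v_i=u(x_i)$ into the symmetric reformulation \eref{eq:model_energy_red} (equivalent to \eref{eq:model_energy}), split the discrete energy into a ``difference'' and a ``sum'' piece, and identify each as a Riemann-sum approximation of one of the two integrals in \eref{energywithbarrier}. Concretely, with $w_{i,j}=\gamma(|x_j-x_i|)$, I write
\[
E_1 := \tfrac12\sum_{i,j}\gamma(|x_j-x_i|)\,\varPsi\!\bigl((v_j-v_i)^2\bigr),\quad
E_2 := \tfrac12\sum_{i,j}\gamma(|x_j-x_i|)\,\varPsi\!\bigl((v_j+v_i)^2\bigr),
\]
so $E(\bm{v},\bm{\tilde{W}}) = E_1+E_2$. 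Since $\gamma$ is compactly supported in $[0,\varrho)$, only pairs with $|x_j-x_i|<\varrho$ contribute, which legitimises the local Taylor expansions of $u$ used below.

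For the barrier contribution $E_2$, I would Taylor-expand $v_j+v_i = 2u(x_i) + u_x(x_i)(x_j-x_i) + O(h^2)$ inside the argument of $\varPsi$. By continuity of $\varPsi(\cdot^2)$,
\[
\varPsi\!\bigl((v_j+v_i)^2\bigr) = \varPsi\!\bigl(4u(x_i)^2\bigr) + O\!\bigl(|x_j-x_i|\bigr),
\]
so the inner sum factorises as $\varPsi(4u(x_i)^2)\sum_j\gamma(|x_j-x_i|)$ up to lower-order terms. Using the Riemann-sum identities $\sum_j\gamma(|x_j-x_i|) \sim \tfrac{2}{h}\int_0^\varrho\gamma(r)\,\mathrm{d}r$ and $\sum_i f(x_i) \sim h^{-1}\int_\Omega f\,\mathrm{d}x$, I obtain $E_2\approx\tfrac12\int_\Omega D\,\varPsi(4u^2)\,\mathrm{d}x$ with a positive constant $D$ that absorbs the kernel's zeroth moment and the $h$-prefactors.

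For the penaliser contribution $E_1$, Taylor expansion gives $(v_j-v_i)^2 \approx u_x(x_i)^2(x_j-x_i)^2$. Substituting into $\varPsi$ and invoking the asymptotic behaviour of the penaliser class near the origin—$\varPsi_{a,n}(s^2)\sim -2na|s|$ as $s\to 0$, equivalently $\varPsi$ being asymptotically homogeneous of degree $\tfrac12$ at zero—yields the factorisation $\varPsi(u_x^2r^2)\approx|r|\,\varPsi(u_x^2)$ with controllable sub-leading error. The inner sum then decouples as $\varPsi(u_x(x_i)^2)\sum_k\gamma(|kh|)\,|kh|$, and the same Riemann-sum transition produces $E_1\approx\tfrac12\int_\Omega C\,\varPsi(u_x^2)\,\mathrm{d}x$ with $C>0$ involving the first moment $\int_0^\varrho\gamma(r)\,r\,\mathrm{d}r$. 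Adding $E_1$ and $E_2$ recovers \eref{energywithbarrier} with $W=C\varPsi$ and $B=D\varPsi(4\cdot)$.

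The hardest step will be the factorisation underlying the penaliser analysis: $\varPsi$ is only $C^0$ at the periodisation points and behaves singularly (square-root-like) near zero, so no classical Taylor expansion in its argument is available. Justifying $\varPsi(u_x^2r^2)\approx|r|\,\varPsi(u_x^2)$ with a controlled error demands exploiting the specific form of the $\varPsi_{a,n}$ family and carefully tracking higher-order contributions. This is also why the theorem's conclusion is phrased with ``$\approx$'' rather than a strict equality: it reflects a qualitative shape correspondence rather than sharp discrete-to-continuum convergence.
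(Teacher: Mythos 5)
Your decomposition of \eref{eq:model_energy_red} into a difference part and a sum part, the first-order approximations $v_j-v_i\approx (j-i)\,h\,u_x(x_i)$ and $v_j+v_i\approx 2u(x_i)$, and the Riemann-sum treatment of the barrier term coincide with the paper's own proof; there, too, $D$ arises from the zeroth moment of $\gamma$ and $C$ from its first moment. The genuine gap is in your penaliser step. The factorisation $\varPsi\bigl(r^2u_x^2\bigr)\approx|r|\,\varPsi\bigl(u_x^2\bigr)$ cannot be justified by the behaviour of $\varPsi_{a,n}$ near the origin, because the arguments appearing in the sum are $|v_j-v_i|\approx|k|\,h\,|u_x(x_i)|$ with $|k|$ ranging up to $\varrho/h$; these values sweep essentially all of $(0,1)$ and are not small, so the asymptotics $\varPsi_{a,n}(s^2)\sim-2na|s|$ at $s\to0$ does not control them. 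For nonlinear members of the family the resulting error is $O(1)$, not sub-leading: with $\varPsi_{1,2}(s^2)=(s-1)^4-1$ one has $\varPsi\bigl((\tfrac12)^2\bigr)=-\tfrac{15}{16}$, whereas $\tfrac12\,\varPsi(1)=-\tfrac12$. Hence your argument, as written, only goes through for $n=1$, where $\varPsi(s^2)=-\kappa|s|$ on $[0,1]$ and the factorisation is an exact identity.

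This is precisely how the paper handles the difficulty: it first carries out the penaliser computation for the linear penaliser, where pulling out the factor $|j-i|$ is exact and $C\approx\frac{2}{h^2}\int_0^\varrho z\,\gamma(z)\,\mathrm{d}z$ appears as the first moment of $\gamma$; for nonlinear $\varPsi$ it does not claim a quantitative identification $W\approx C\,\varPsi$, but only that the inner sum becomes a weighted sum of terms $\varPsi\bigl((k\,u_x)^2\bigr)$, which is still a decreasing function of $u_x^2$ that is convex in $u_x$, so that $W'$ behaves qualitatively like $\varPsi'$. To repair your proof, either restrict the quantitative step to the linear case and then sum over $i$, or replace the claimed factorisation by this weaker structural statement about $W$; as it stands, the ``controllable sub-leading error'' you invoke does not exist for $n\ge2$.
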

\begin{remark} \hfill
\begin{enumerate}[{(}a{)}]
\item The penaliser $W$ is decreasing and convex in $u_x$. The barrier function 
$B$ is convex and it enforces the interval 
constraint on $u$ by favouring values $u$ away from the interval boundaries.
The discrete penaliser $\varPsi$ generates both the penaliser $W$ for 
derivatives and the barrier function $B$.
\item Note that by construction of $W$ the diffusivity\\
$g(u_x^2) := W'(u_x^2)\sim\varPsi'(u_x^2)$
has a singularity at $0$ with $-\infty$ as limit.
\item The cut-off of $\gamma$ at radius $\varrho$ implies the locality of the 
functional \eref{energywithbarrier} that can thereby be linked to a diffusion 
equation of type \eref{eq:fd}. Without a cut-off, a nonlocal diffusion equation 
would arise instead.
\end{enumerate}
\end{remark}

\begin{proof}[of Theorem~\ref{thm-sce}]
We notice first that $v_j-v_i$ and $v_i+v_j$ for $1\le i,j\le N$
are first-order approximations of $(j-i)\,h\,u_x(x_i)$ and
$2 u(x_i)$, respectively.

\paragraph{Derivation of the Penaliser $W$.}
Assume first for simplicity that $\varPsi(s^2)=-\kappa s$, $\kappa>0$ is linear 
in $s$ on $[0,1]$ (thus not strictly convex).
Then we have for a part of the inner
sums of \eref{eq:model_energy}
corresponding to a fixed $i$:
\begin{equation}
\label{Psi-from-Phi}
\begin{split}
\frac12 \biggl(
\sum \limits_{j=1}^N
&
\gamma(|x_j-x_i|) \cdot \varPsi\bigl((v_j-v_i)^2\bigr) +
\sum \limits_{\mathclap{j=N+1}}^{2N}
\gamma(|x_j-x_{2N+1-i}|) \cdot \varPsi\bigl((v_j-v_{2N+1-i})^2\bigr)
\biggr)\\
& = \sum \limits_{j=1}^N
\gamma(|x_j - x_i|) \cdot \varPsi\bigl((|v_j-v_i|)^2\bigr)\\
& \approx
-\kappa \, h \, u_x(x_i) \sum\limits_{j=1}^N
\gamma(|j-i| \, h) \cdot |j-i|\\
& = h \, \varPsi\bigl(u_x(x_i)^2\bigr)
\sum\limits_{k=1-i}^{N-i} |k| \, \gamma(|k|\,h)\\
& \approx h \varPsi (u_x (x_i)^2) \cdot
\frac{2}{h^2} \int_0^\varrho z \gamma (z) \mathrm{d}z\\
& =: h C \varPsi (u_x (x_i)^2) , \hspace{0.2\textwidth}
\end{split}
\end{equation}
where in the last step the sum over $k=1-i,\ldots,N-i$ has been replaced
with a sum over 
$k = -\lfloor \varrho/h \rfloor, \dots, \lfloor \varrho/h \rfloor$, 
thus
introducing a cutoff error for those locations $x_i$ that are within the 
distance $\varrho$ from the interval ends.
%\begin{sloppypar}
Summation over $i=1,\ldots,N$ approximates \linebreak
$\int_\Omega C \varPsi(u_x^2) \mathrm{d}x$
from which we can read off 
$W(u_x^2)\approx C \varPsi(u_x^2)$.
%\end{sloppypar}

For $\varPsi(s^2)$ that are non-linear in $s$,
$\varPsi(u_x(x_i)^2)$ in \eref{Psi-from-Phi} is changed into
a weighted sum of $\varPsi\bigl((k u_x(x_i))^2\bigr)$ for
$k=1,\ldots,N-1$,
which still amounts to a decreasing function $W(u_x^2)$ that is convex
in $u_x$. Qualitatively, $W'$ then behaves the same way as before.

\paragraph{Derivation of the Barrier Function $B$.}
Collecting the summands of \eref{eq:model_energy}
that were not used in \eref{Psi-from-Phi}, we have, again for fixed $i$,
\begin{equation}
\begin{split}
\frac12 \biggl(
\sum \limits_{{j = N+1}}^{2N}
&
\gamma\bigl(|x_j-x_i|\bigr) \cdot
\varPsi\bigl((v_j-v_i)^2\bigr)
+ \sum\limits_{j = 1}^{N} 
\gamma\bigl(|x_j-x_{2N+1-i}|\bigr) \cdot
\varPsi\bigl((v_j-v_{2N+1-i})^2\bigr)
\biggr)\\
& = \sum\limits_{j = 1}^{N} 
\gamma\bigl(|x_j-x_i|\bigr) \cdot
\varPsi \bigl((v_i+v_j)^2\bigr)\\
& \approx \left( \frac{2}{h} \int_0^\varrho \gamma(z) \mathrm{d} z + 1 \right)
\cdot \varPsi (4 u(x_i)^2)\\
& =: h D \cdot \varPsi (4 u(x_i)^2) , \hspace{0.2\textwidth}
\end{split}
\end{equation}
and thus after summation over $i$ analogous to the previous step
$\int_\Omega B(u)\,\mathrm{d}x$ with
$B(u) \approx D \varPsi (4 u^2)$.\qed
\end{proof}

Similar derivations can be made for patches of 2D images.
A point worth noticing is that the barrier function
$B$ is bounded.
This differs from usual continuous models where such barrier functions tend to
infinity at the interval boundaries. However,
for each given sampling grid and patch size the barrier function is just
strong enough to prevent $W$ from pushing the values out of the interval.
%
% ------------------------------------------------------------------------------
% ------------------------------------------------------------------------------
%
\section{Explicit Time Discretisation}
\label{sec:numerics}
Up to this point we have established a theory for the time-continuous evolution 
of particle positions. In order to be able to employ our model in 
simulations and applications we need to discretise 
\eref{eq:model_gradient_descent_redundant} in time.
Subsequently, we provide a simple yet powerful discretisation which
preserves all important properties of the time-continuous model.
An approximation of the time derivative in 
\eref{eq:model_gradient_descent_redundant} by forward differences yields the 
explicit scheme
\begin{equation}
\label{eq:explicit_redundant}
%\begin{split}
v_i^{k+1} = 
\,\, v_i^k
+ \tau \cdot 
\sum\limits_{\ell \in J_2^i}
\tilde{w}_{i,\ell} \cdot \varPhi(v_\ell^k-v_i^k)
- \tau \cdot \sum\limits_{\ell = 1}^N
\tilde{w}_{i,\ell} \cdot \varPhi(v_\ell^k+v_i^k) ,
%\end{split}
\end{equation}
for $i = 1,\ldots,N$,
where $\tau$ denotes the time step size and an upper index $k$ refers to the 
time $k\tau$.
In the following, we derive necessary conditions for which the explicit scheme 
preserves the position range $(0,1)$ and the position ordering. Furthermore, we 
show convergence of \eref{eq:explicit_redundant} in dependence of $\tau$.
\begin{theorem}[Avoidance of Range Interval Boundaries of the Explicit Scheme]
\label{thm:range_preserv}
Let $L_\varPhi$ be the Lipschitz constant of $\varPhi$ restricted to the 
interval $(0,2)$. Moreover, let $0 < v_i^k < 1$, for every $1 \leq i \leq 
N$, and assume that the time step size $\tau$ of the explicit scheme 
\eref{eq:explicit_redundant} satisfies
\begin{equation}
\label{eq:thm:explicit_range_000}
0 < \tau < \frac{1}{2 \cdot L_\varPhi \cdot \max \limits_{1 \leq i \leq N} \sum 
\limits_{\ell = 1}^N \tilde{w}_{i,\ell}} \enspace.
\end{equation}
Then it follows that $0 < v_i^{k+1} < 1$ for every $1 \leq i \leq N$.
\end{theorem}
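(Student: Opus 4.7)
The plan is to prove $v_i^{k+1}\in(0,1)$ by establishing the two-sided linear bound
\begin{equation*}
-2L_\varPhi v_i^k W_i \;\le\; F_i(\bm{v}^k) \;\le\; 2L_\varPhi(1-v_i^k) W_i ,
\end{equation*}
where $F_i(\bm{v}^k):=(v_i^{k+1}-v_i^k)/\tau$ is the RHS of \eqref{eq:explicit_redundant} and $W_i:=\sum_\ell\tilde w_{i,\ell}$. Substituted into the Euler update, these bounds immediately yield $v_i^{k+1}\ge v_i^k(1-2\tau L_\varPhi W_i)>0$ and $v_i^{k+1}\le v_i^k(1-2\tau L_\varPhi W_i)+2\tau L_\varPhi W_i<1$ (a convex combination of $v_i^k\in(0,1)$ and $1$) as soon as $0<2\tau L_\varPhi W_i<1$, which is exactly what the assumed step size restriction guarantees via $W_i\le\max_j W_j$.

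To obtain the two-sided bounds, I would first rewrite $F_i$ in the equivalent form already used in the proof of Theorem~\ref{thm:boundaries}, namely $F_i=A_i+B_i$ with $A_i=\sum_{\ell\in J_2^i}\tilde w_{i,\ell}\bigl(\varPhi(v_\ell^k-v_i^k)-\varPhi(v_\ell^k+v_i^k)\bigr)$ and $B_i=-\sum_{\ell\in J_3^i}\tilde w_{i,\ell}\,\varPhi(2v_i^k)$. The point of this form is that, after using the periodicity $\varPhi(s)=\varPhi(s+2)$ to move any argument of $\varPhi$ into $(0,2)$, only differences within this interval remain, so the Lipschitz constant $L_\varPhi$ controls every evaluation.

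For $A_i$ I would split each summand according to the sign of $v_\ell^k-v_i^k$. If $v_\ell^k>v_i^k$, both $v_\ell^k\pm v_i^k$ lie in $(0,2)$ and differ by $2v_i^k$, so Lipschitz continuity and monotonicity of $\varPhi$ on $(0,2)$ give a non-positive contribution of absolute value at most $2L_\varPhi v_i^k$. If $v_\ell^k<v_i^k$, replacing the first term by $\varPhi(v_\ell^k-v_i^k+2)$ via periodicity leaves both arguments in $(0,2)$, now differing by $2(1-v_i^k)$, and yields a non-negative contribution bounded by $2L_\varPhi(1-v_i^k)$. For $B_i$ I would use $\varPhi(1)=0$ (a consequence of $\varPhi$ being odd and $2$-periodic) together with Lipschitz continuity to obtain $|\varPhi(2v_i^k)|\le L_\varPhi|2v_i^k-1|$, and note that $B_i$ has the same sign as $\tfrac12-v_i^k$.

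The main delicate step is merging the $A_i$- and $B_i$-estimates into the single two-sided bound above. This works because $|2v_i^k-1|\le 2(1-v_i^k)$ when $v_i^k\le\tfrac12$ (the regime in which $B_i\ge0$ threatens the upper bound), and $|2v_i^k-1|\le 2v_i^k$ when $v_i^k\ge\tfrac12$ (the regime in which $B_i\le0$ threatens the lower bound). In each regime the $B_i$-contribution can therefore be absorbed into the same $v_i^k$- or $(1-v_i^k)$-factor that already controls $A_i$, and summation over $\ell$ using the partition of $W_i$ into the subsums over $\{v_\ell^k>v_i^k\}$, $\{v_\ell^k<v_i^k\}$ and $J_3^i$ completes the estimate.
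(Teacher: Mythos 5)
Your proposal is correct, and its core coincides with the paper's: the same rewriting of the update into the form \eqref{eq:explicit_redundant_alternative}, and the same toolkit (monotonicity of $\varPhi$ on $(0,2)$, its $2$-periodicity, $\varPhi(1)=0$, and the Lipschitz constant $L_\varPhi$) applied case by case according to the sign of $v_\ell^k-v_i^k$ and the position of $v_i^k$ relative to $\tfrac12$. Where you diverge is the second half of the claim: the paper only establishes the one-sided estimate $v_i^{k+1}-v_i^k\ge-2\tau L_\varPhi v_i^k\sum_\ell\tilde w_{i,\ell}>-v_i^k$ and then disposes of $v_i^{k+1}<1$ by the reflection substitution $\tilde v_i^k:=1-v_i^k$, implicitly using the equivariance of the scheme under $v\mapsto 1-v$; you instead prove a genuinely two-sided bound, keeping the sharper constant $2L_\varPhi(1-v_i^k)$ in the $v_\ell^k<v_i^k$ case (which the paper coarsens to $2L_\varPhi v_i^k$, sufficient for its one-sided purpose) and absorbing the $\varPhi(2v_i^k)$ term into the matching factor via $|2v_i^k-1|\le 2(1-v_i^k)$ or $|2v_i^k-1|\le 2v_i^k$. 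This buys you a fully explicit conclusion, $v_i^{k+1}$ trapped between $v_i^k(1-2\tau L_\varPhi W_i)$ and the convex combination $(1-2\tau L_\varPhi W_i)v_i^k+2\tau L_\varPhi W_i$, without having to argue that the dynamics is symmetric under reflection about $\tfrac12$ — a point the paper leaves terse — at the cost of a slightly longer bookkeeping of positive versus negative contributions. Both arguments use the step-size restriction \eqref{eq:thm:explicit_range_000} in the same way, through $2\tau L_\varPhi W_i<1$ with $W_i\le\max_j\sum_\ell\tilde w_{j,\ell}$.
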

\begin{proof}
In accordance with \eref{eq:model_gradient_descent_redundant_alternative} the 
explicit scheme \eref{eq:explicit_redundant} can be 
written as
\begin{equation}
\label{eq:explicit_redundant_alternative}
v_i^{k+1} = 
\,\, v_i^k
+ \tau \cdot 
\sum\limits_{\ell \in J_2^i}
\tilde{w}_{i,\ell} \cdot \Big( \varPhi(v_\ell^k-v_i^k) -  
\varPhi(v_\ell^k+v_i^k) \Big)
- \tau \cdot \sum\limits_{\ell \in J_3^i}
\tilde{w}_{i,\ell} \cdot \varPhi(2v_i^k) ,
\end{equation}
where $i = 1,\ldots,N$.
Now assume that $0 < v_i^k, v_j^k < 1$ and let us examine the contribution of 
the two summation terms in \eref{eq:explicit_redundant_alternative}. We need to 
distinguish the following five cases:\\
1. If $v_i^k = v_j^k \leq \frac12$ then $2v_i^k \in (0,1]$. Thus,
\begin{equation}
\label{eq:thm:explicit_range_00}
0 \leq -\varPhi(2v_i^k) .
\end{equation}
2. If $\frac12 < v_i^k = v_j^k$ then $2v_i^k \in (1,2)$. Thus, using 
$\varPhi(1) = 0$,
\begin{equation}
\label{eq:thm:explicit_range_01}
%\begin{split}
|\varPhi(2v_i^k)| = 
|\varPhi(2v_i^k) - \varPhi(1)|
\leq |2v_i^k - 1| \cdot L_\varPhi
<  2v_i^k \cdot L_\varPhi .
%\end{split}
\end{equation}
3. If $v_i^k < v_j^k$ then $v_j^k - v_i^k, \, v_j^k + v_i^k \in (0,2)$. Thus,
\begin{equation}
\label{eq:thm:explicit_range_02}
|\varPhi(v_j^k+v_i^k)-\varPhi(v_j^k-v_i^k)| \leq L_\varPhi \cdot 2v_i^k .
\end{equation}
4. If $v_j^k < v_i^k \leq \frac12$ then $v_j^k - v_i^k \in (-1,0)$ and $v_j^k + 
v_i^k \in (0,1)$. Thus,
\begin{align}
\label{eq:thm:explicit_range_03}
0 & \leq  \varPhi(v_j^k-v_i^k) - \varPhi(v_j^k+v_i^k) ,\\
\label{eq:thm:explicit_range_04}
0 & \leq  -\varPhi(2v_i^k) .
\end{align}
5. Finally, if $v_j^k < v_i^k$ and $\frac12 < v_i^k$, using the periodicity of 
$\varPhi$ we get
\begin{equation}
\label{eq:thm:explicit_range_05}
%\begin{split}
|\varPhi(v_j^k-v_i^k)-\varPhi(v_j^k+v_i^k)| %\hspace{0.1\textwidth}
%
%\begin{split}
= |\varPhi(v_j^k+v_i^k)-\varPhi(2+v_j^k-v_i^k)|
\leq 2v_i^k \cdot L_\varPhi .
%\end{split}
%\end{split}
\end{equation}
Combining \eref{eq:explicit_redundant} with 
\eref{eq:thm:explicit_range_000} and
\eref{eq:thm:explicit_range_00}--\eref{eq:thm:explicit_range_05}
we obtain that
\begin{align}
\nonumber
v_i^{k+1} - v_i^k
= &
-\tau \cdot 
\sum\limits_{\ell \in J_2^i}
\tilde{w}_{i,\ell} \cdot \Big( \varPhi(v_\ell^k+v_i^k) - 
\varPhi(v_\ell^k-v_i^k) \Big)%\\
%
%\nonumber
%&
- \tau \cdot \sum\limits_{\ell \in J_3^i}
\tilde{w}_{i,\ell} \cdot \varPhi(2v_i^k)\\
\nonumber
\geq &
-\tau \cdot L_\varPhi \cdot 2v_i^k \cdot 
\sum \limits_{\ell = 1}^N \tilde{w}_{i,\ell}\\
\label{eq:thm:explicit_range_06}
> & -v_i^k ,
\end{align}
from which it directly follows that $v_i^{k+1} > 0$, as claimed.\\
The proof for $v_i^{k+1} < 1$ is straightforward. Assume w.l.o.g. that 
$\tilde{v}_i^k := 1-v_i^k$. For the reasons given above, we obtain 
$\tilde{v}_i^{k+1} > 0$. Consequently, $1 - v_i^{k+1} > 0$ and $v_i^{k+1} < 1$ 
follows.\qed
\end{proof}
\begin{theorem}[Rank-Order Preservation of the Explicit Scheme]
Let $L_\varPhi$ be the Lipschitz constant of $\varPhi$ restricted to the 
interval 
$(0,2)$. Furthermore, let $v_i^0$, for $i = 1,\ldots,N$, denote the initially 
distinct positions in $(0,1)$ and -- in accordance with Theorem 
\ref{thm:nonequality} -- let the weight matrix $\bm{\tilde{W}}$ have constant 
columns, i.e. $\tilde{w}_{j,\ell} = \tilde{w}_{i,\ell}$ for $1 \leq i,j,\ell 
\leq N$. 
Moreover, let $0 < v_i^k < v_j^k < 1$ and assume that the time step size 
$\tau$ used in the explicit scheme 
\eref{eq:explicit_redundant} satisfies
\begin{equation}
\label{eq:thm:explicit_order_000}
0 < \tau < \frac{1}{2 \cdot L_\varPhi \cdot \max \limits_{1 \leq i \leq N} \sum 
\limits_{\ell = 1}^N \tilde{w}_{i,\ell}} \, .
\end{equation}
Then we have $v_i^{k+1} < v_j^{k+1}$.
\end{theorem}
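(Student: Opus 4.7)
The plan is to mimic the continuous-time argument of Theorem~\ref{thm:nonequality} in the discrete setting, absorbing the time step $\tau$ through the stated CFL-type bound~\eref{eq:thm:explicit_order_000}. Setting $d:=v_j^k-v_i^k>0$, the scheme~\eref{eq:explicit_redundant} gives
\begin{equation*}
v_j^{k+1}-v_i^{k+1} \;=\; d+\tau\,\bigl(G_j^k-G_i^k\bigr),
\end{equation*}
where $G_i^k$ abbreviates the right-hand side of~\eref{eq:explicit_redundant} at particle $i$. The key point is that the column-constancy $\tilde{w}_{j,\ell}=\tilde{w}_{i,\ell}$, valid for every $\ell\in\{1,\ldots,N\}$ (including $\ell=i$ and $\ell=j$), eliminates or explicitly controls every cross-term in $G_j^k-G_i^k$ exactly as in Theorem~\ref{thm:nonequality}.

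I would first split $G_j^k-G_i^k$ into a direct part ($\ell\in\{i,j\}$) and an indirect part ($\ell\notin\{i,j\}$). Using $\tilde{w}_{j,i}=\tilde{w}_{i,i}$ and $\tilde{w}_{j,j}=\tilde{w}_{i,j}$, the direct part collapses to the dominant nonnegative contribution $-(\tilde{w}_{i,i}+\tilde{w}_{i,j})\,\varPhi(d)=(\tilde{w}_{i,i}+\tilde{w}_{i,j})\,|\varPhi(d)|$ (using $\varPhi<0$ on $(0,1)$) together with two reflection remainders $\tilde{w}_{i,i}\bigl[\varPhi(2v_i^k)-\varPhi(v_i^k+v_j^k)\bigr]$ and $\tilde{w}_{i,j}\bigl[\varPhi(v_i^k+v_j^k)-\varPhi(2v_j^k)\bigr]$; their arguments all lie in $(0,2)$ where $\varPhi$ is Lipschitz, giving a magnitude bound of $L_\varPhi\,d$ each. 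Every indirect summand has the form $\tilde{w}_{i,\ell}\bigl[\varPhi(v_\ell^k-v_j^k)-\varPhi(v_\ell^k-v_i^k)+\varPhi(v_\ell^k+v_i^k)-\varPhi(v_\ell^k+v_j^k)\bigr]$, whose second pair is again bounded by $L_\varPhi d$ via Lipschitz continuity on $(0,2)$.

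The one delicate point, and the main obstacle, is the first pair of the indirect summand when $v_i^k<v_\ell^k<v_j^k$: then the arguments $v_\ell^k-v_j^k$ and $v_\ell^k-v_i^k$ straddle the jump of $\varPhi$ at the origin, so a plain Lipschitz bound is unavailable. I would resolve this by observing that in precisely this sub-case $\varPhi(v_\ell^k-v_j^k)>0$ while $\varPhi(v_\ell^k-v_i^k)<0$, so the pair contributes a nonnegative amount that can simply be dropped from the lower estimate; in every other sub-case both arguments lie in $(-1,0)$ or both in $(0,1)$ and the Lipschitz bound $L_\varPhi d$ applies cleanly. Combining all bounds yields
\begin{equation*}
G_j^k-G_i^k \;\geq\; (\tilde{w}_{i,i}+\tilde{w}_{i,j})\,|\varPhi(d)| \;-\; 2\,L_\varPhi\,d\sum_{\ell=1}^N \tilde{w}_{i,\ell},
\end{equation*}
and hence $v_j^{k+1}-v_i^{k+1}\geq d\bigl(1-2\tau L_\varPhi \sum_{\ell=1}^N \tilde{w}_{i,\ell}\bigr)>0$ by~\eref{eq:thm:explicit_order_000}, as required. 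Everything besides the jump-at-the-origin case is routine bookkeeping.
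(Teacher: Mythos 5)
Your proposal is correct and follows essentially the same route as the paper's proof: isolate the mutual repulsion term $(\tilde{w}_{j,i}+\tilde{w}_{i,j})\cdot\varPhi(v_i^k-v_j^k)>0$, use the constant-column assumption to pair up all remaining summands, bound their total contribution by $2\,L_\varPhi\,(v_j^k-v_i^k)\sum_{\ell=1}^N\tilde{w}_{i,\ell}$ via Lipschitz continuity, and conclude with the step-size restriction \eref{eq:thm:explicit_order_000}, exactly as in the paper's estimates $T_1$ and $T_2$. Your one deviation is in fact a refinement: the paper applies the bound $|\varPhi(v_\ell^k-v_j^k)-\varPhi(v_\ell^k-v_i^k)|\leq L_\varPhi\,|v_j^k-v_i^k|$ to every $\ell\neq i,j$, which is not justified when $v_i^k<v_\ell^k<v_j^k$ since the two arguments then straddle the jump of $\varPhi$ at $0$; your observation that this summand is nonnegative there and can simply be dropped from the lower bound closes precisely that loophole while leaving the rest of the argument unchanged.
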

\begin{proof}
For distinct positions, \eref{eq:explicit_redundant} reads
\begin{equation}
%\begin{split}
v_i^{k+1} = 
\,\, v_i^k +
\tau \cdot
\sum \limits_{\scriptstyle \ell = 1\atop \scriptstyle \ell \neq i}^N
\tilde{w}_{i,\ell} \cdot \varPhi(v_\ell^k-v_i^k)
- \tau \cdot
\sum \limits_{\ell = 1}^N
\tilde{w}_{i,\ell} \cdot \varPhi(v_\ell^k+v_i^k)
%\end{split}
\end{equation}
for $i = 1,\ldots,N$. Considering this explicit discretisation for $\partial_t 
v_i$ and $\partial_t v_j$ we obtain for $i,j \in \{1,2,\ldots,N\}$:
\begin{equation}
\label{eq:thm:explicit_order_00}
\begin{split}
v_j^{k+1}-v_i^{k+1}
= & \,\, v_j^k - v_i^k
+ \tau \cdot (\tilde{w}_{j,i} + \tilde{w}_{i,j}) \cdot \varPhi(v_i^k - v_j^k)\\
& \hspace{.5ex}
%\begin{split}
+ \, \tau \cdot
\sum \limits_{\mathclap{\scriptstyle \ell = 1\atop \scriptstyle \ell \neq 
i,j}}^N
\Big( \tilde{w}_{j,\ell} \cdot \varPhi(v_\ell^k - v_j^k) -
\tilde{w}_{i,\ell} \cdot \varPhi(v_\ell^k - v_i^k) \Big)\\
%\end{split}\\
%
& \hspace{.5ex}
%\begin{split}
- \, \tau \cdot \sum \limits_{\ell = 1}^N
\Big( \tilde{w}_{j,\ell} \cdot \varPhi(v_\ell^k + v_j^k) -
\tilde{w}_{i,\ell} \cdot \varPhi(v_\ell^k + v_i^k) \Big) .
%\end{split}
\end{split}
\end{equation}
Now remember that $v_i^k < v_j^k$ by assumption and that -- as a consequence -- 
\begin{equation}
\label{eq:thm:explicit_order_01}
\tau \cdot (\tilde{w}_{j,i} + 
\tilde{w}_{i,j}) \cdot \varPhi(v_i^k - v_j^k) > 0 .
\end{equation}
Using the fact that $\tilde{w}_{j,k} = \tilde{w}_{i,k}$ for $1 \leq 
i,j,k \leq N$ and that $\varPhi$ is Lipschitz in $(0,2)$, we also know that
\begin{align}
\nonumber
T_1 & :=
\tau \cdot
\sum \limits_{\scriptstyle \ell = 1\atop \scriptstyle \ell \neq i,j}^N
\Big| \tilde{w}_{j,\ell} \cdot \varPhi(v_\ell^k - v_j^k) -
\tilde{w}_{i,\ell} \cdot \varPhi(v_\ell^k - v_i^k) \Big|\\
\nonumber
& = \tau \cdot 
\sum \limits_{\scriptstyle \ell = 1\atop \scriptstyle \ell \neq i,j}^N
\tilde{w}_{j,\ell} \cdot
\Big| \varPhi(v_\ell^k - v_j^k) - \varPhi(v_\ell^k - v_i^k) \Big|\\
& \leq \tau \cdot L_\varPhi \cdot |v_i^k - v_j^k| \cdot
\sum \limits_{\scriptstyle \ell = 1\atop \scriptstyle \ell \neq i,j}^N
\tilde{w}_{j,\ell} \enspace,\\
%
%\end{align}
%\begin{align}
\nonumber
T_2 & :=
\tau \cdot
\sum \limits_{\ell = 1}^N
\Big| \tilde{w}_{j,\ell} \cdot \varPhi(v_\ell^k + v_j^k) -
\tilde{w}_{i,\ell} \cdot \varPhi(v_\ell^k + v_i^k) \Big|\\
\nonumber
& = \tau \cdot
\sum \limits_{\ell = 1}^N
\tilde{w}_{j,\ell} \cdot 
\Big| \varPhi(v_\ell^k + v_j^k) - \varPhi(v_\ell^k + v_i^k) \Big|\\
& \leq \tau \cdot L_\varPhi \cdot |v_j^k-v_i^k| \cdot
\sum \limits_{\ell = 1}^N \tilde{w}_{j,\ell} \, .
\end{align}
Let the time step size $\tau$ fulfil \eref{eq:thm:explicit_order_000}. Then we 
can write
\begin{equation}
T_1 + T_2 <
2 \cdot L_\varPhi \cdot 2 \cdot |v_j^k-v_i^k| \cdot
\sum \limits_{\ell = 1}^N \tilde{w}_{j,\ell} <
v_j^k-v_i^k .
\end{equation}
In combination with $T_1, T_2 \geq 0$, it follows that 
\begin{equation}
\label{eq:thm:explicit_order_02}
T_2 - T_1 \geq 
-T_2 - T_1 >
-(v_j^k - v_i^k) ,
\end{equation}
and we immediately know that $v_j^k - v_i^k - T_1 + T_2 > 0$. Together
with \eref{eq:thm:explicit_order_00} and \eref{eq:thm:explicit_order_01} we get 
$0 < v_j^{k+1} - v_i^{k+1}$, as claimed.\qed
\end{proof}
%
% ------------------------------------------------------------------------------
%
\begin{theorem}[Convergence of the Explicit Scheme]
\label{thm:conv_expl}
Let \eref{eq:model_energy_red} be a twice
continuously differentiable convex function.
Then the explicit scheme \eref{eq:explicit_redundant} converges for time step 
sizes
\begin{equation}
\label{eq:thm:conv_explicit_000}
0 < \tau \leq \frac{1}{2 \cdot L_{\varPhi} \cdot 
\max\limits_{1 \leq i \leq N} \sum \limits_{j=1}^N \tilde{w}_{i,j}}
< \frac{2}{L} ,
\end{equation}
where $L_\varPhi$ denotes the Lipschitz constant of $\varPhi$ restricted to the 
interval $(0,2)$ and $L$ refers to the Lipschitz constant of the gradient of
\eref{eq:model_energy_red}.
\end{theorem}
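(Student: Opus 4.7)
The scheme \eref{eq:explicit_redundant} is precisely the forward Euler / gradient descent iteration $\bm{v}^{k+1} = \bm{v}^k - \tau\,\bm{\nabla}E(\bm{v}^k,\bm{\tilde{W}})$ applied to the convex, twice continuously differentiable energy \eref{eq:model_energy_red}. The plan is to reduce convergence to the well-known fact that gradient descent on a convex $C^{1,1}$ function with Lipschitz gradient constant $L$ converges for any fixed step size $\tau\in(0,2/L)$, and then to verify that the bound \eref{eq:thm:conv_explicit_000} indeed lies in that admissible range.

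First, I would link the prescribed step size bound to the Lipschitz constant of $\bm{\nabla}E$ that has already been estimated in the well-posedness theorem: by \eref{eq:lipschitz_max}, $L\le L_{\mathrm{max}} = 4\cdot L_\varPhi\cdot\max_i\sum_{j=1}^N \tilde{w}_{i,j}$, so the upper bound in \eref{eq:thm:conv_explicit_000} is at most $\tfrac{1}{2L_\varPhi\max_i\sum_j\tilde{w}_{i,j}} = \tfrac{2}{L_{\mathrm{max}}} \le \tfrac{2}{L}$, and strict inequality holds because the Gershgorin estimate in the well-posedness proof was strict.

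Next, I would invoke the descent lemma, which for an $L$-Lipschitz gradient gives
\begin{equation*}
E(\bm{v}^{k+1},\bm{\tilde{W}}) \le E(\bm{v}^k,\bm{\tilde{W}}) - \tau\Bigl(1-\tfrac{L\tau}{2}\Bigr)\bigl\|\bm{\nabla}E(\bm{v}^k,\bm{\tilde{W}})\bigr\|^2 .
\end{equation*}
Since $\tau<2/L$, the coefficient $\tau(1-L\tau/2)$ is strictly positive, so $\{E(\bm{v}^k,\bm{\tilde{W}})\}$ is monotonically decreasing. By convexity of $E$ and the existence of a (global) minimiser established in Section~\ref{sec:general_results}, $E$ is bounded from below, so $E(\bm{v}^k,\bm{\tilde{W}})$ converges and, telescoping, $\sum_k \|\bm{\nabla}E(\bm{v}^k,\bm{\tilde{W}})\|^2<\infty$, yielding $\bm{\nabla}E(\bm{v}^k,\bm{\tilde{W}})\to \bm{0}$. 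Combined with Theorem~\ref{thm:range_preserv} (the iterates stay inside the compact set $[0,1]^N$ under the same step-size bound), any accumulation point is a critical, hence by convexity a global, minimiser of $E$. A standard argument then upgrades convergence of $E(\bm{v}^k,\bm{\tilde{W}})$ to convergence of the iterates themselves: in the strictly convex cases treated earlier the minimiser is unique, and in the merely convex case one uses Fej\'er monotonicity of $\|\bm{v}^k-\bm{v}^*\|$ (which also follows from $\tau<2/L$) together with the compactness of the iterate range.

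The main obstacle I anticipate is not the descent step itself but the bookkeeping needed to pass from $\bm{\nabla}E\to\bm{0}$ to genuine convergence of $\bm{v}^k$, since Theorem~\ref{thm:conv_expl} is stated under mere convexity rather than strict convexity. Here I would rely on Fej\'er monotonicity: for any minimiser $\bm{v}^*$, $\|\bm{v}^{k+1}-\bm{v}^*\|^2 \le \|\bm{v}^k-\bm{v}^*\|^2 - \tau(2/L-\tau)\|\bm{\nabla}E(\bm{v}^k,\bm{\tilde{W}})\|^2$, which keeps $\{\bm{v}^k\}$ bounded and, combined with the uniqueness of the accumulation point (any convergent subsequence hits the set of minimisers, and Fej\'er monotonicity forces at most one accumulation point), delivers convergence of the full sequence.
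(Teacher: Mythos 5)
Your proposal is correct and takes essentially the same route as the paper: identify \eref{eq:explicit_redundant} as gradient descent on the convex energy \eref{eq:model_energy_red}, bound the Lipschitz constant of its gradient by $L_{\mathrm{max}}$ from \eref{eq:lipschitz_max}, and observe that $\tau \leq 2/L_{\mathrm{max}} < 2/L$ (with strictness coming from the strict Gershgorin estimate). The only difference is that you work out the standard descent-lemma and Fej\'er-monotonicity argument explicitly, whereas the paper simply cites the textbook convergence result for gradient descent with step sizes $0<\tau<2/L$.
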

\begin{proof}
Convergence of the gradient method to the global minimum of 
$E(\bm{v},\tilde{\bm{W}})$ is well-known for continuously differentiable convex 
functions with Lipschitz continuous gradient and
time step sizes $0 < \tau < 2 / L$ 
(see e.g. \cite[Theorem 2.1.14]{Ne04}).
A valid Lipschitz constant is given by $L_{\mathrm{max}}$ as defined in 
\eref{eq:lipschitz_max}.
Consequently, the time step sizes $\tau$ need to fulfil 
\eref{eq:thm:conv_explicit_000} in order to ensure convergence of 
\eref{eq:explicit_redundant}.
%The smaller or equal relation results from the strictness in 
%\eref{eq:lipschitz_max}.\qed
The smaller or equal relation results from \eref{eq:lipschitz_max}.
%which implies that $L_{\mathrm{max}} > L$.
The latter defines $L_{\mathrm{max}} > L$ such that 
$\tau = 2/L_{\mathrm{max}}$ represents a valid time step size.\qed
\end{proof}
%
% ------------------------------------------------------------------------------
%
\begin{remark}[Optimal Time Step Size]
The optimal time step size, 
i.e. the value of $\tau$ which leads to most rapid descent, is given by $\tau = 
1/L$ (see e.g. \cite[Section 2.1.5]{Ne04}). Thus, 
we suggest to use $\tau = 1/L_{\mathrm{max}}$.
\end{remark}
%
% ------------------------------------------------------------------------------
% ------------------------------------------------------------------------------
%
\section{Application to Image Enhancement}
\label{sec:application}
Now that we have presented a stable and convergent numerical scheme, we apply 
\eref{eq:explicit_redundant} to enhance the contrast of digital greyscale and 
colour images.
Throughout all experiments we use $\varPsi = \varPsi_{1,1}$ (cf. 
\Tref{tab:class_psi} and \Fref{fig:psi_dpsi_phi}).
%
% ------------------------------------------------------------------------------
%
\subsection{Greyscale Images}
The application of the proposed model to greyscale images follows the ideas 
presented in \cite{BW16}. We define a 
digital greyscale image as a mapping $f: \{1,\ldots,n_x\} \times 
\{1,\ldots,n_y\} \to [0,1]$. Note that all grey values are mapped to the 
interval $(0,1)$ to ensure the validity of our model before processing.
The grid position of the $i$-th image pixel is 
given by the vector $\bm{x}_i$ whereas $v_i$ denotes the corresponding grey 
value. Subsequently, we will see that a well-considered choice of the weighting 
matrix $\bm{\tilde{W}}$ allows either to enhance the \emph{global} or the 
\emph{local} contrast of a given image.
%
% ------------------------------------------------------------------------------
%
\subsubsection{Global Contrast Enhancement}
\label{sec:app:grey:global}
For global contrast enhancement we make use of the global model as discussed in
\Sref{sec:global_model}. Only the $N$ different occurring grey values $v_i$ -- 
and not their positions in the image -- are considered. We let every entry 
$\tilde{w}_{i,j}$ of the weighting 
matrix denote the frequency of grey value $v_j$ in the image. Assuming an 8-bit 
greyscale image this leads to a weighting matrix of size $256 \times 256$ which 
is independent of the image size. As illustrated in \Fref{fig:app:grey_global}, 
global contrast enhancement can be achieved in two ways:
\begin{figure}[t]
\newdimen\imgwidtha
\imgwidtha=0.32\textwidth
\centering
\includegraphics[width=\imgwidtha]{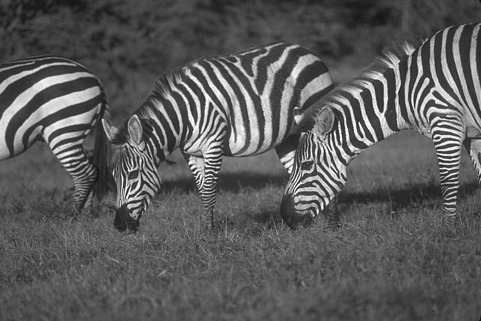} \hfill
\includegraphics[width=\imgwidtha]{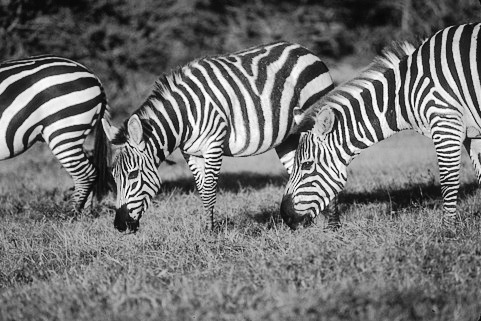} \hfill
\includegraphics[width=\imgwidtha]{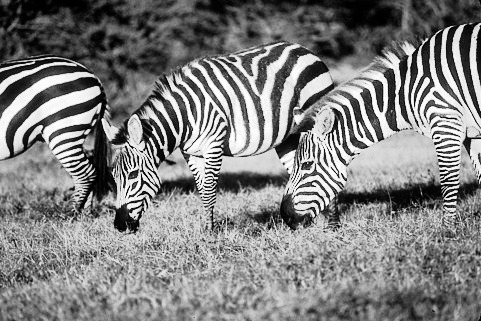}\\[0.02\textwidth]
\includegraphics[width=\imgwidtha]{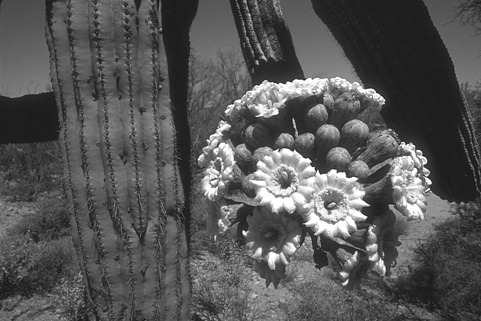} \hfill
\includegraphics[width=\imgwidtha]{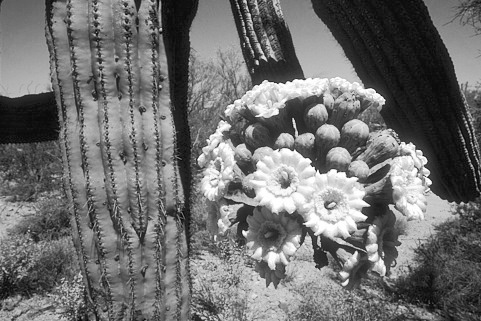} \hfill
\includegraphics[width=\imgwidtha]{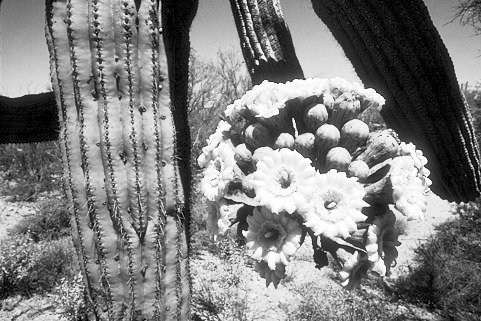}\\[0.02\textwidth]
\includegraphics[width=\imgwidtha]{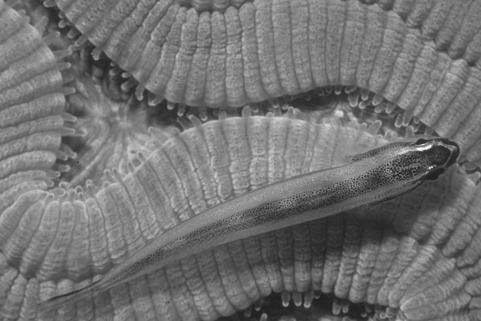} \hfill
\includegraphics[width=\imgwidtha]{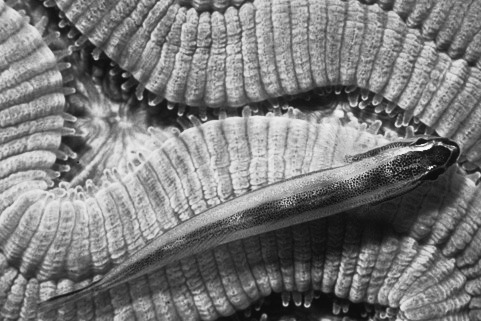} \hfill
\includegraphics[width=\imgwidtha]{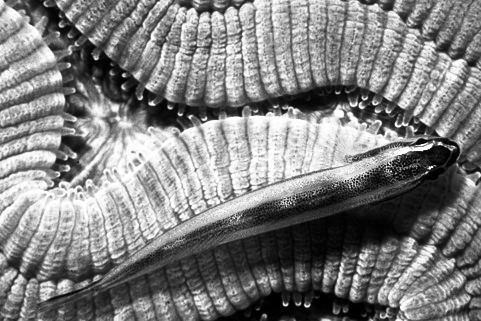}\\
\makebox[\imgwidtha]{Original image}
\hfill
\makebox[\imgwidtha]{$t = 2 \cdot 10^{-6}$}
\hfill
\makebox[\imgwidtha]{Steady state \eref{eq:steady_state_linear}}
\caption{Global contrast enhancement using $\varPhi = \varPhi_{1,1}$ and 
greyscale versions of images from the BSDS500 \cite{AMFM11}.}
\label{fig:app:grey_global}
\end{figure}
As a first option one can use the explicit scheme \eref{eq:explicit_redundant} 
to describe the evolution of all grey values $v_i$ up to some time $t$
(see column two
of \Fref{fig:app:grey_global}).
The amount of contrast enhancement grows with increasing values of $t$. In our 
experiments an image size of
$481 \times 321$
pixels and the application of the 
flux function $\varPhi_{1,1}$ with $L_\varPhi=1$ imply an upper bound of
$1/(2 \cdot 481 \cdot 321)$ for $\tau$. Thus, we can achieve the time $t = 2 
\cdot 10^{-6}$ in \Fref{fig:app:grey_global} in a single iteration.
If one is only interested in an enhanced version of the original image with 
maximum global contrast there is an alternative, namely the derived steady 
state solution for linear flux functions \eref{eq:steady_state_linear}. The 
results are shown in the last column of \Fref{fig:app:grey_global}. 
This figure 
also confirms that the solution of the explicit scheme 
\eref{eq:explicit_redundant} converges to the steady-state solution 
\eref{eq:steady_state_linear} for $t \to \infty$. From 
\eref{eq:steady_state_linear} it is clear that this steady state is equivalent 
to histogram equalisation.
In summary, this means that the application of our global model to greyscale 
images offers
an evolution equation
histogram equalisation which allows to 
control the amount of contrast enhancement in an intuitive way through the time 
parameter $t$.
%
% ------------------------------------------------------------------------------
%
\subsubsection{Local Contrast Enhancement}
\label{sec:app:local_grey}
In order to achieve local contrast enhancement 
we use our model to describe the evolution of grey 
values $v_i$ at all $n_x \cdot n_y$ image grid positions. The change of every 
grey value $v_i$ depends on all grey values within a disk-shaped 
neighbourhood of radius $\varrho$ around its grid position $\bm{x}_i$.
We assume that
\begin{equation}
\label{eq:local_model_weights}
\tilde{w}_{i,j} := \gamma(|\bm{x}_j-\bm{x}_i|),
\qquad \forall i,j \in \{1,2,\ldots,N\} ,
\end{equation}
where we weight the spatial distance $|\bm{x}_j-\bm{x}_i|$ by a function 
$\gamma: \mathbb{R}_0^+ \to [0,1]$ with compact support $[0,\varrho)$ 
which fulfils
\begin{equation}
\label{eq:local_model_gamma}
%\begin{split}
\begin{aligned}
& \gamma(x) \in (0,1], & \text{if } x < \varrho,\\
& \gamma(x) = 0, & \text{if } x \geq \varrho.
\end{aligned}
%\end{split}
\end{equation}
The choice of $\gamma$ is application dependent. However, it usually makes 
sense to define $\gamma(x)$ as a non-increasing function in $x$. Possible 
choices are e.g.
\begin{align}
\label{eq:local_model_gamma_01}
\gamma_1(x) & =
\begin{cases}
1 , & \text{if } x < \varrho ,\\
0 , & \text{else} ,
\end{cases}\\
\label{eq:local_model_gamma_02}
\gamma_2(x) & =
\begin{cases}
%1 - 6x^2 + 6 x^3 , & \text{if } 0 \leq x < \frac\varrho2 ,\\
%2 \cdot (1 - x)^3 , & \text{if } \frac\varrho2 \leq x < \varrho ,\\
1 - 6 \frac{x^2}{\varrho^2} + 6 \frac{x^3}{\varrho^3} , & \text{if } 0 \leq x < 
\frac\varrho2 ,\\
2 \cdot (1 - \frac{x}{\varrho})^3 , & \text{if } \frac\varrho2 \leq x < \varrho 
,\\
0 , & \text{else} ,
\end{cases}
\end{align}
which are both sketched in \Fref{fig:app:local_kernel}.
\begin{figure}
    \centering
    \begin{tikzpicture}
    \draw[->] (-0.1,0) -- (2.75,0) node[right] {$x$};
    \draw[dashed] (2.5,0) node[below] {$\varrho$} -- (2.5,2.75);
    \draw[->] (0,-0.1) -- (0,2.75) node[above] {$\gamma_1(x)$} 
    node[left,pos=0.9] 
    {$1$};
    \draw[scale=2.5,domain=0:1,smooth,variable=\x,blue] plot ({\x},{1});
    \end{tikzpicture}
%     \hspace{1.5cm}
    \begin{tikzpicture}
    \draw[->] (-0.1,0) -- (2.75,0) node[right] {$x$};
    \draw[dashed] (2.5,0) node[below] {$\varrho$} -- (2.5,2.75);
    \draw[->] (0,-0.1) -- (0,2.75) node[above] {$\gamma_2(x)$} 
    node[left,pos=0.9] 
    {$1$};
    \draw[dashed] (0,2.5) -- (2.75,2.5);
    \draw[scale=2.5,domain=0:0.5,smooth,variable=\x,blue] plot 
    ({\x},{1-6*\x*\x+6*\x*\x*\x});
    \draw[scale=2.5,domain=0.5:1,smooth,variable=\x,blue] plot 
    ({\x},{2*(1-\x)*(1-\x)*(1-\x)});
    \end{tikzpicture}\\
    \caption{Box function $\gamma_1$ and scaled cubic B-spline $\gamma_2$.}
    \label{fig:app:local_kernel}
\end{figure}
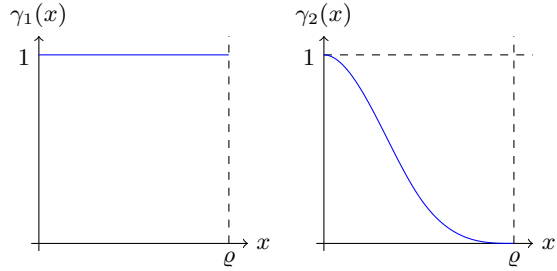
When applying this local model to images we make use of mirroring boundary 
conditions in order to avoid artefacts at the image boundaries. 
\Fref{fig:app:grey_local} provides an example for local contrast enhancement of 
digital greyscale images.
\begin{figure}[t]
\centering
\imgwidtha=0.32\textwidth
\includegraphics[width=\imgwidtha]{16068-grey} \hfill
\includegraphics[width=\imgwidtha]{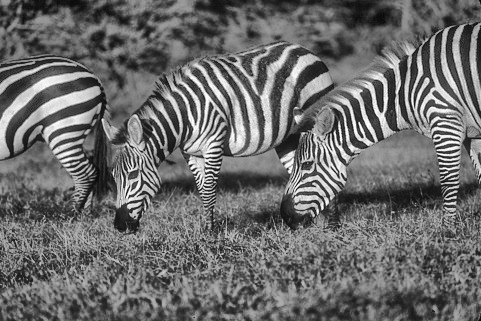} \hfill
\includegraphics[width=\imgwidtha]{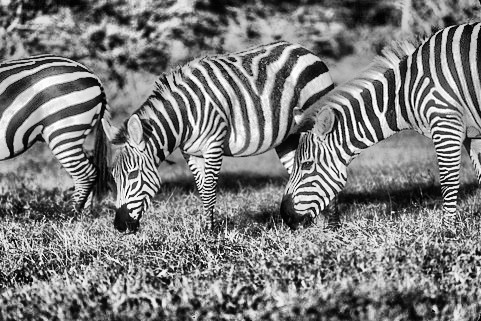}\\[0.02\textwidth]
\includegraphics[width=\imgwidtha]{19021-grey} \hfill
\includegraphics[width=\imgwidtha]{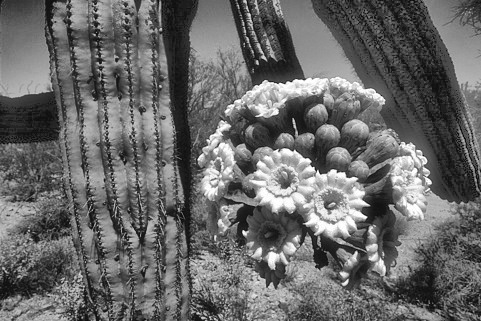} \hfill
\includegraphics[width=\imgwidtha]{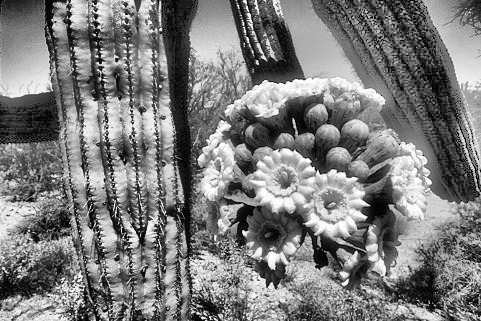}\\[0.02\textwidth]
\includegraphics[width=\imgwidtha]{209021-grey} \hfill
\includegraphics[width=\imgwidtha]{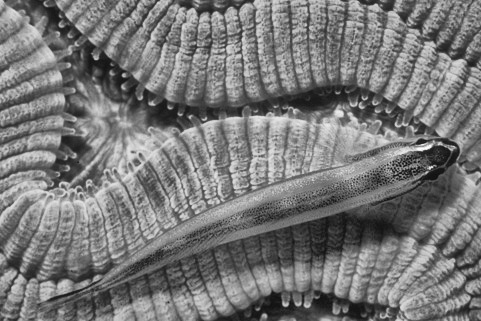} \hfill
\includegraphics[width=\imgwidtha]{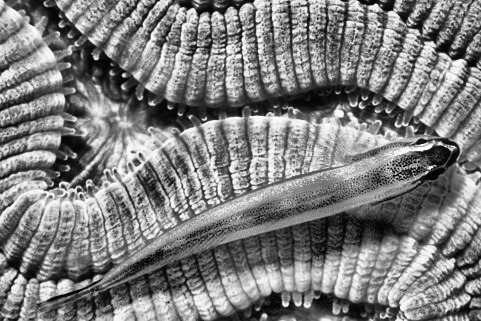}\\
\makebox[\imgwidtha]{Original image}
\hfill
\makebox[\imgwidtha]{$t = 2 \cdot 10^{-5}$}
\hfill
\makebox[\imgwidtha]{$t = 4 \cdot 10^{-5}$}
\caption{Local contrast enhancement using $\varPhi = \varPhi_{1,1}$, 
$\gamma=\gamma_1$, 
$\varrho = 60$,
and greyscale versions of images from the BSDS500 \cite{AMFM11}.}
\label{fig:app:grey_local}
\end{figure}
Again, we describe the grey value evolution with the explicit scheme 
\eref{eq:explicit_redundant}. Furthermore, we use $\gamma_1$ to model the 
influence of neighbouring grey values. As is evident from 
\Fref{fig:app:grey_local},
%(b)-(d)
increasing the values for $t$ goes along with 
enhanced local contrast.
%
% ------------------------------------------------------------------------------
%
\subsection{Colour Images}
\label{sec:app:colour}
Based on the assumption that our input data is given in sRGB colour space 
\cite{SACM96} (in the following denoted by RGB) we represent a digital 
colour image by the mapping $f: \{1,\ldots,n_x\} \times \{1,\ldots,n_y\} \to 
[0,1]^3$. Subsequently, our aim is the contrast enhancement of digital colour 
images without distorting the colour information. 
This means that we only want to adapt the \emph{luminance} but not the 
\emph{chromaticity} of a given image.
For this purpose, we convert the given image data to YCbCr colour space 
\cite[Section 3.5]{Pr01} since this representation provides a separate 
luminance channel. Next, we perform contrast enhancement on the luminance 
channel only. Just as for greyscale images we map all Y-values to the interval 
$(0,1)$ to fulfil our model requirements. After enhancing the contrast, we 
transform the colour information of the image back to RGB colour space.

At this point it is important to mention that the colour gamut of the RGB 
colour space is a subset of the YCbCr colour gamut and during the conversion 
process of colour coordinates from YCbCr to RGB colour space the so-called 
colour gamut problem may occur: Colours from the YCbCr colour gamut
may lie outside the RGB colour gamut and thus cannot be represented in RGB 
colour coordinates.
Naik and Murthy \cite{NM03} state that a simple clipping of the values to the 
bounds creates undesired shift of hue and may lead to colour artefacts.
In order to avoid the colour gamut problem we adapt the ideas presented by
Nikolova and Steidl \cite{NS14} which are based on the intensity 
representation of the HSI colour space \cite[Section 6.2.3]{GW06}.
Using the original and enhanced intensities, they define an affine colour 
mapping and transform the original RGB values. This preserves the hue and 
results in an enhanced RGB image.
It is straightforward to show that their 
algorithms are valid for any intensity $\hat{f}$ of type
\begin{equation}
\hat{f} = c_r \cdot r + c_g \cdot g + c_b \cdot b ,
\end{equation}
with $c_r + c_g + c_b = 1$ and $c_r, c_g, c_b \in [0,1]$, where $r$, $g$, and 
$b$ denote RGB colour coordinates. Thus, they are applicable to the luminance 
representation of the YCbCr colour space, too, i.e. $c_r=0.299$, $c_g=0.587$, 
$c_b=0.114$. Tian and Cohen make use of the same idea in \cite{TC2017}. 
As in \cite{NS14}, our result 
image is a convex combination of the outcomes of a multiplicative and 
an additive algorithm (see \cite[Algorithm 4 and 5]{NS14}) with coefficients 
$\lambda$ and $1-\lambda$ for $\lambda 
\in [0,1]$. During our experiments we use a fixed value of $\lambda = 0.5$ (for 
details on how to choose $\lambda$ we refer to 
\cite{NS14}). An overview of our strategy for contrast enhancement of digital 
colour value images is given in \Fref{fig:app:colour_concept}.
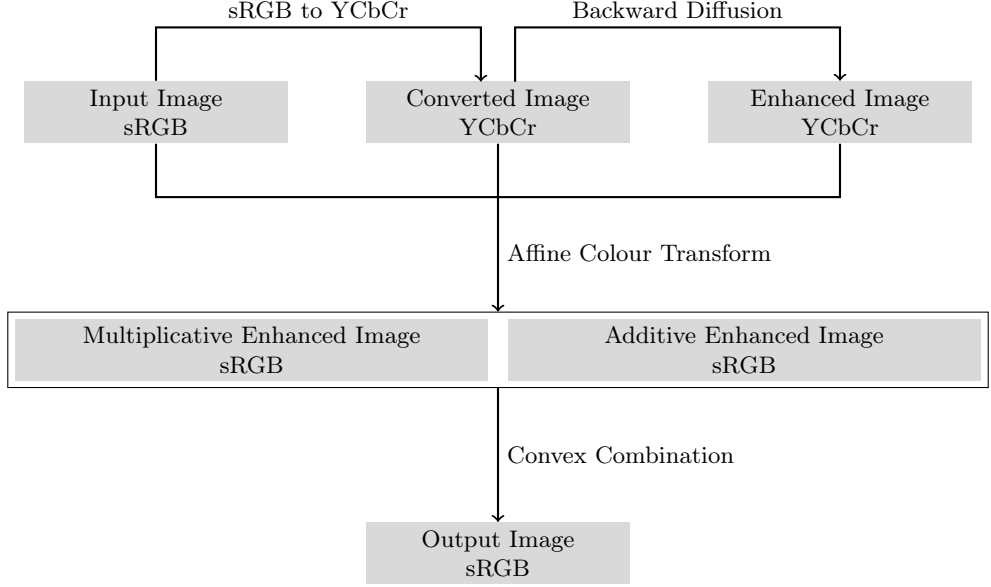
\begin{figure}
    \centering
    \begin{tikzpicture}[scale=.9]
    \tikzstyle{every node} = [rectangle, fill=gray!30]
    \node [minimum width=3.25cm, text width=3.25cm, align=center] (a) at (0, 
    6.5) 
    {Input Image\\{\footnotesize sRGB}};
    \node [minimum width=3.25cm, text width=3.25cm, align=center] (b) at (5, 
    6.5) 
    {Converted Image\\{\footnotesize YCbCr}};
    \node [minimum width=3.25cm, text width=3.25cm, align=center] (c) at (10, 
    6.5) 
    {Enhanced Image \\{\footnotesize 
            YCbCr}};
    \node [minimum width=6.0cm, text width=6.0cm, align=center] (d) at (1.4, 
    3.0) 
    {Multiplicative Enhanced Image \\{\footnotesize sRGB}};
    \node [minimum width=6.0cm, text width=6.0cm, align=center] (e) at (8.6, 
    3.0) 
    {Additive  Enhanced Image \\{\footnotesize sRGB}};
    \node [minimum width=3.25cm, text width=3.25cm, align=center] (g) at (5, 0) 
    {Output Image \\{\footnotesize sRGB}};
    \tikzstyle{every node} = []
    \node (f) at (5,3.0) [draw,minimum width=12.9cm,minimum height=1cm] {};
    \draw [->,thick] (a) -- (0, 7.75) -- (4.75, 7.75) node [pos=0.5,above] 
    {\small sRGB to YCbCr} -- (4.75, 6.95);
    \draw [->,thick] (5.25, 6.95) -- (5.25,7.75) -- (10,7.75) node 
    [pos=0.5,above] 
    {\small Backward Diffusion} -- (c);
    \draw [-,thick] (a) |- (5, 5.25) -| (c);
    \draw [->,thick] (b) -- (f) node[pos=0.65,right] {\small Affine Colour 
        Transform};
    \draw [->,thick] (f) -- (g) node[pos=0.5,right] {\small Convex Combination};
    \end{tikzpicture}\\
    
    \caption{Procedure of contrast enhancement for digital colour images 
    following \cite{NS14}.}
    \label{fig:app:colour_concept}
\end{figure}
%
% ------------------------------------------------------------------------------
%
\subsubsection{Global Contrast Enhancement}
Again, we apply the global model from \Sref{sec:global_model} in order to 
achieve global contrast enhancement. As mentioned before, we consider the $N$ 
different occurring Y-values of the YCbCr representation of the input image and 
denote them by $v_i$ (similar to \Sref{sec:app:grey:global} we neglect their 
positions in the image). Every entry of the weighting matrix 
$\tilde{w}_{i,j}$ contains the
number of occurences of the value 
$v_j$ in the Y-channel of the 
image. It becomes clear that the application of our model -- in this setting --
basically comes down to histogram equalisation of the Y-channel.
\Fref{fig:app:colour_global} shows the resulting RGB images after global 
contrast enhancement. Similar to the greyscale scenario, we can either apply 
the explicit scheme \eref{eq:explicit_redundant} or -- for $\varPhi = 
\varPhi_{a,1}$ -- 
estimate the steady state solution following \eref{eq:steady_state_linear}.
For the first case the amount of contrast enhancement grows with the positive 
time parameter $t$. The second
column of \Fref{fig:app:colour_global} 
shows the results for $\varPhi = \varPhi_{1,1}$
given time $t$.
The corresponding steady state solutions are illustrated in the last column of 
\Fref{fig:app:colour_global}.
\begin{figure}
\imgwidtha=0.32\textwidth
\centering
\includegraphics[width=\imgwidtha]{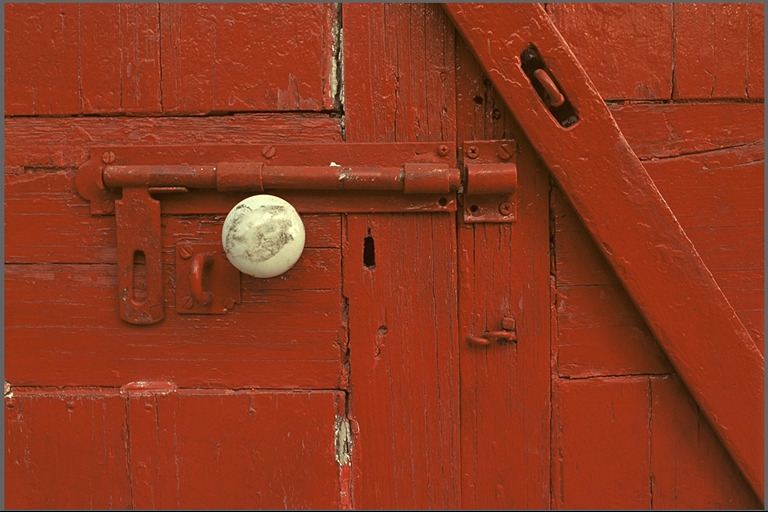} \hfill
\includegraphics[width=\imgwidtha]{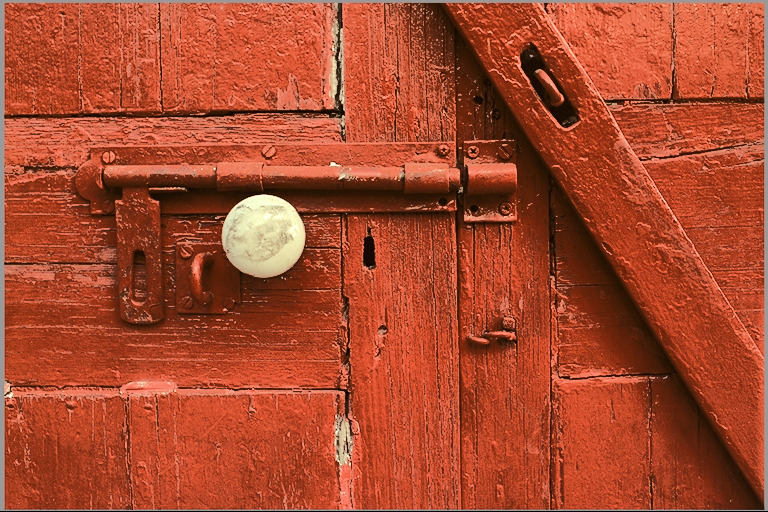} 
\hfill
\includegraphics[width=\imgwidtha]{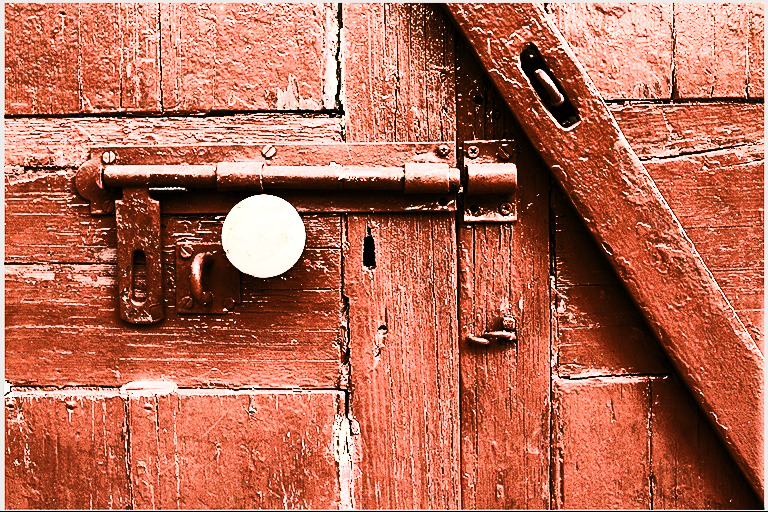}\\[0.02\textwidth]
\includegraphics[width=\imgwidtha]{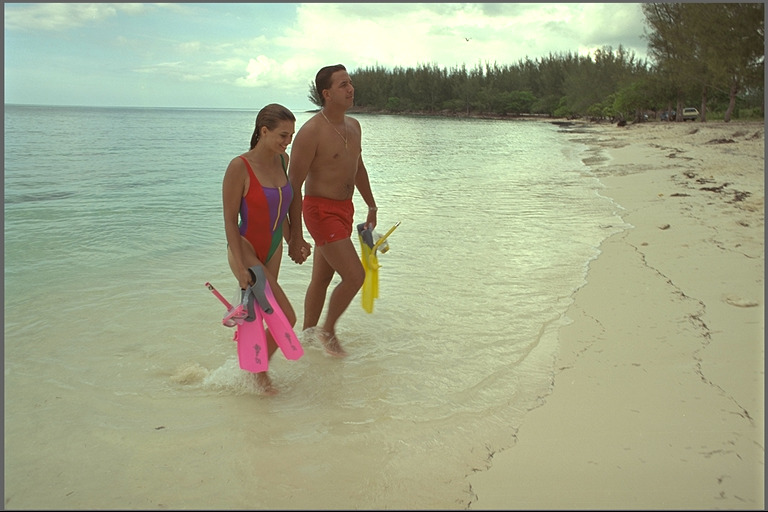} \hfill
\includegraphics[width=\imgwidtha]{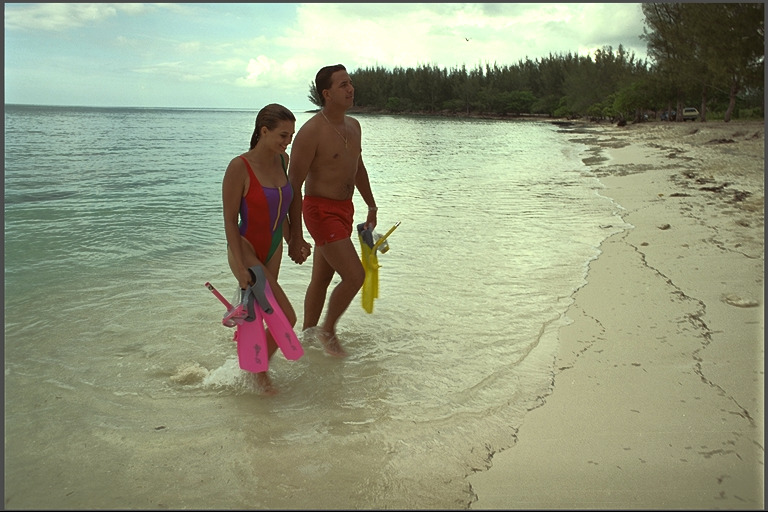} 
\hfill
\includegraphics[width=\imgwidtha]{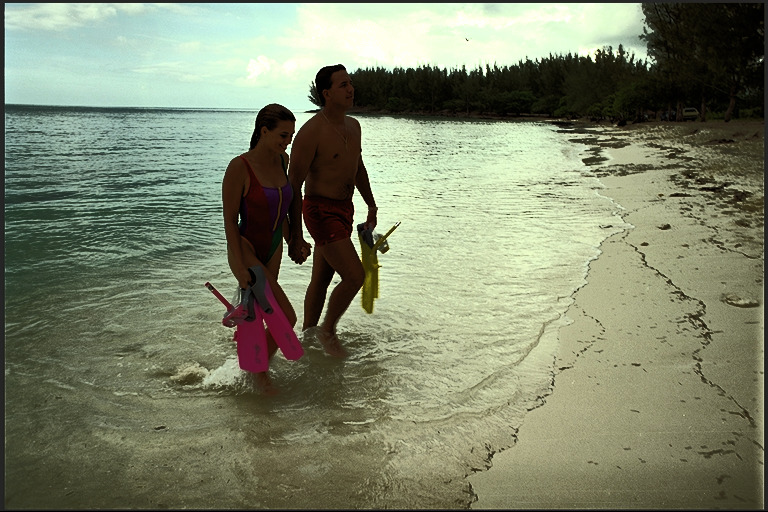}\\[0.02\textwidth]
\includegraphics[width=\imgwidtha]{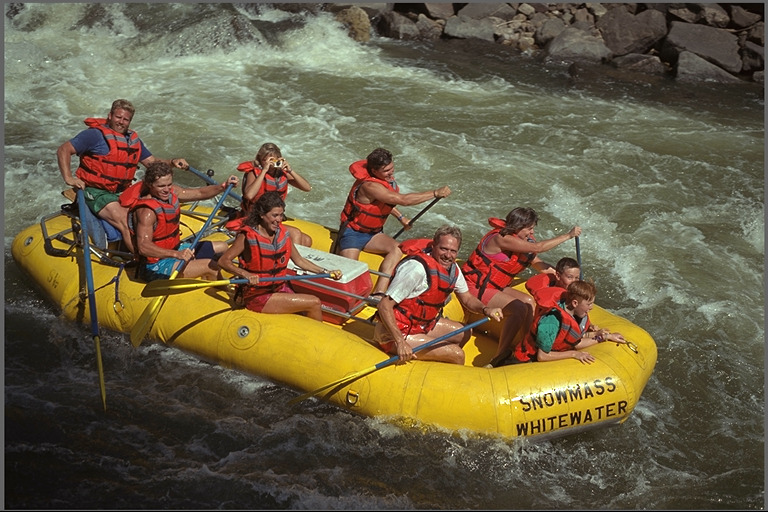} \hfill
\includegraphics[width=\imgwidtha]{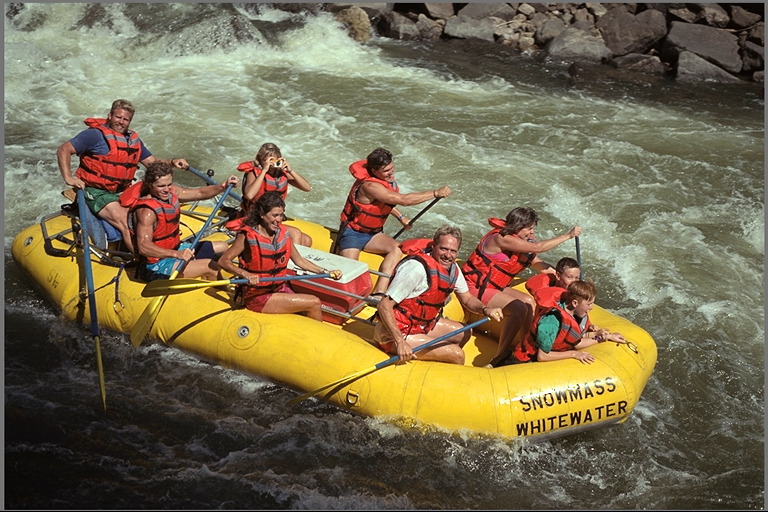} 
\hfill
\includegraphics[width=\imgwidtha]{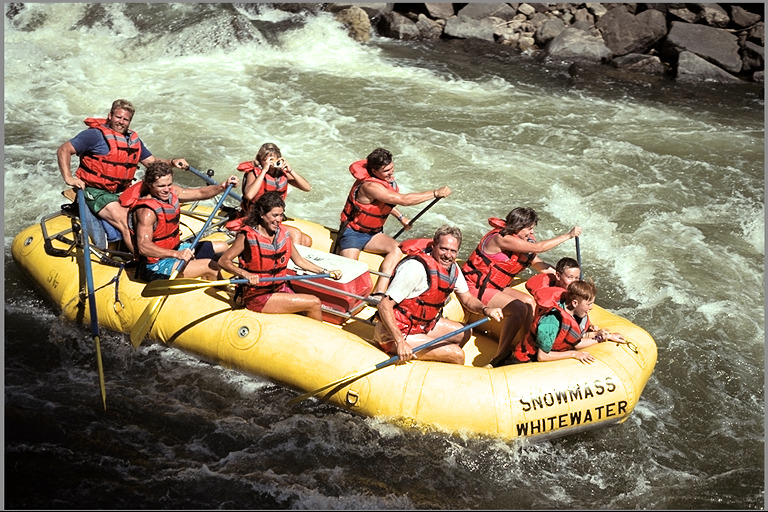}\\
\makebox[\imgwidtha]{Original image}
\hfill
\makebox[\imgwidtha]{$t = 5 \cdot 10^{-7}$}
\hfill
\makebox[\imgwidtha]{Steady state \eref{eq:steady_state_linear}}
\caption{Global contrast enhancement using $\varPhi = \varPhi_{1,1}$, 
$\lambda = 0.5$, and images from \cite{Kodak}.}
\label{fig:app:colour_global}
\end{figure}
\begin{figure}
\centering
\imgwidtha=0.32\textwidth
\includegraphics[width=\imgwidtha]{kodim02} \hfill
\includegraphics[width=\imgwidtha]{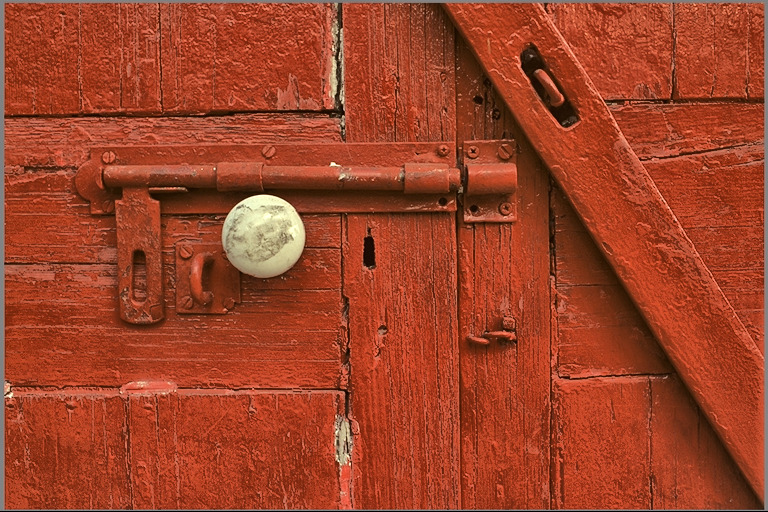}
 \hfill
\includegraphics[width=\imgwidtha]{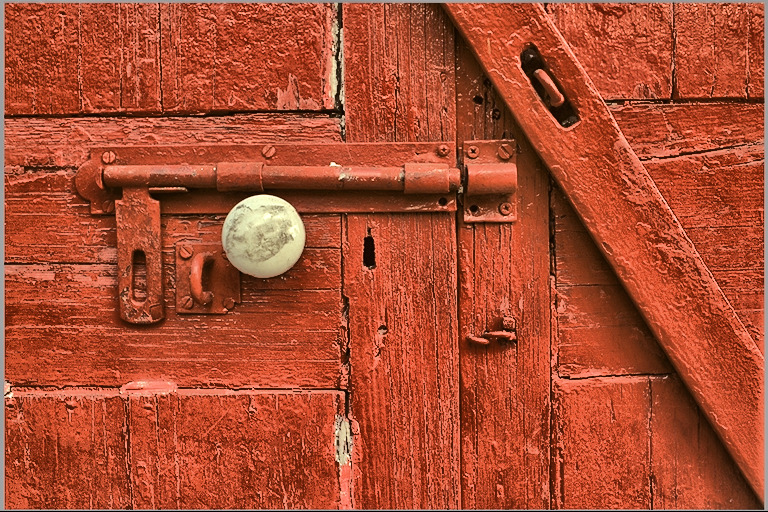}\\[0.02\textwidth]
\includegraphics[width=\imgwidtha]{kodim12} \hfill
\includegraphics[width=\imgwidtha]{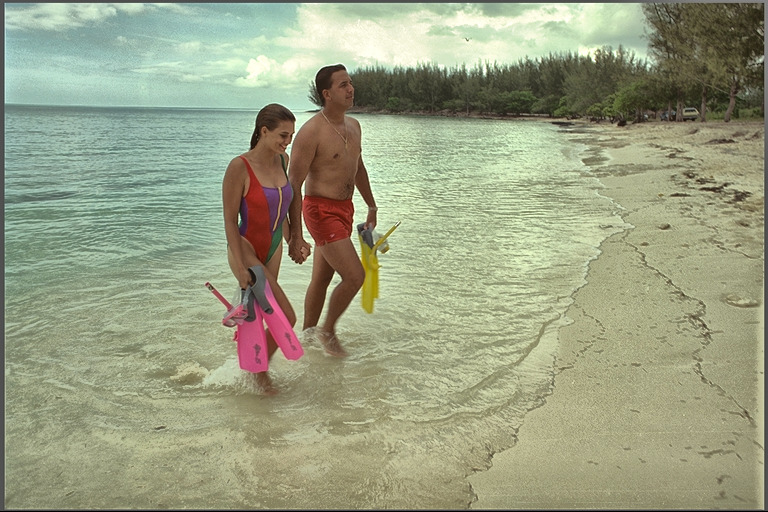}
\hfill
\includegraphics[width=\imgwidtha]{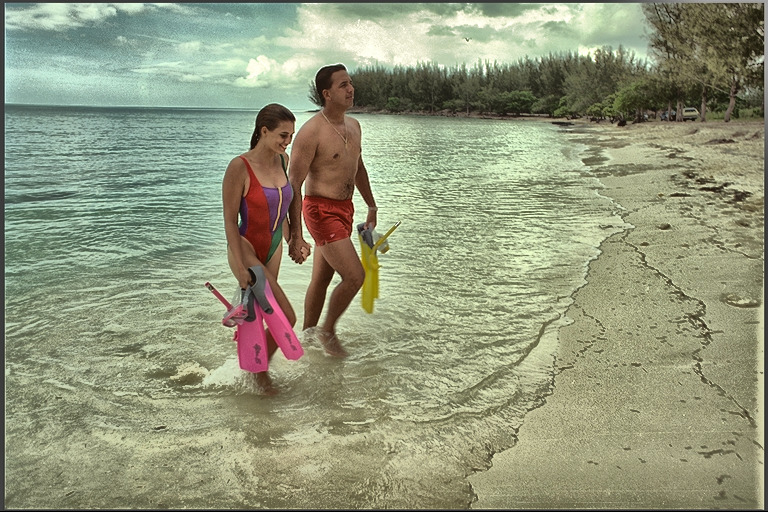}\\[0.02\textwidth]
\includegraphics[width=\imgwidtha]{kodim14} \hfill
\includegraphics[width=\imgwidtha]{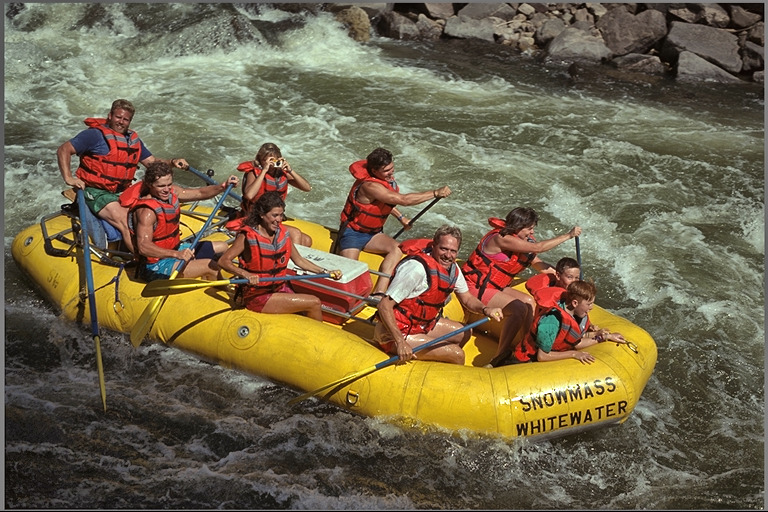}
\hfill
\includegraphics[width=\imgwidtha]{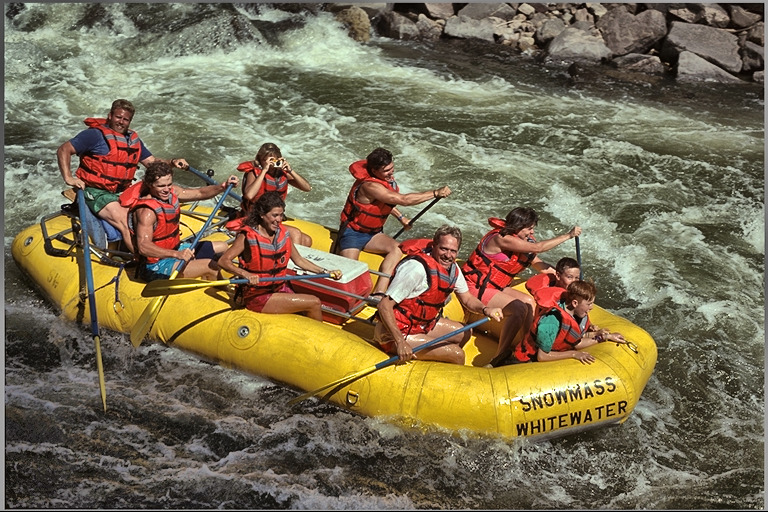}\\
\makebox[\imgwidtha]{Original image}
\hfill
\makebox[\imgwidtha]{$t = 1 \cdot 10^{-5}$}
\hfill
\makebox[\imgwidtha]{$t = 2 \cdot 10^{-5}$}
\caption{Local contrast enhancement using $\varPhi = \varPhi_{1,1}$, 
$\gamma=\gamma_1$,  
$\varrho = 60$, $\lambda = 0.5$,
and images from \cite{Kodak}.}
\label{fig:app:colour_local}
\end{figure}
%
% ------------------------------------------------------------------------------
%
\subsubsection{Local Contrast Enhancement}
In a similar manner -- and adapting the ideas from Subsection 
\ref{sec:app:local_grey} -- we achieve local contrast enhancement in colour 
images. For this purpose we describe the evolution of Y-values $v_i$ 
at all $n_x \cdot n_y$ image grid positions using a disk-shaped neighbourhood 
of radius $\varrho$ around the corresponding grid positions $\bm{x}_i$. The 
entries of the weighting matrix $\bm{\tilde{W}}$ follow 
\eref{eq:local_model_weights}. In combination with mirrored boundary conditions 
the explicit scheme \eref{eq:explicit_redundant} allows to increase the local 
contrast of an image with growing $t$. \Fref{fig:app:colour_local} shows 
exemplary results for $\varPhi = \varPhi_{1,1}$ and $\gamma = \gamma_1$ (cf. 
\eref{eq:local_model_gamma_01}). Note, how well -- in comparison to the global 
set-up in \Fref{fig:app:colour_global} -- 
the structure of the door gets enhanced while the details of the door knob are 
preserved. The differences are even larger in the second image: For both the 
couple in the foreground and the background scenery, contrast increases 
which implies visibility also for larger times $t$.
%
% ------------------------------------------------------------------------------
%
\subsection{Parameters}
In total, our model has up to six parameters: $\varPhi$, $\tau$, $t$, 
$\lambda$, $\varrho$, and $\gamma$.
During our experiments we have fixed
$\varPhi(s)$ to the linear flux function $\varPhi_{1,1}(s)$ 
and $\lambda$ to $0.5$. Valid bounds for 
the time step size $\tau$ are given in Theorems
\ref{thm:range_preserv}--\ref{thm:conv_expl}. From the theory in 
\Sref{sec:theory} and the subsequent
experiments on greyscale and colour images it becomes clear that
the amount of contrast enhancement grows with the diffusion time.
Thus, it remains to discuss the influence of the parameters $\varrho$ and 
$\gamma$.
We found out that the neighbourhood radius $\varrho$ affects the diffusion time 
and controls the amount of perceived local contrast enhancement, i.e. it steers 
the localisation of the contrast enhancement process.
Whereas small radii lead to high contrast in 
already small image areas, the size of image sections with high contrast 
increases with $\varrho$. For sufficiently large values of $\varrho$ global 
histogram equalisation is approximated.
Another interesting point is the choice of the weighting function $\gamma$.
Overall, choosing
$\gamma=\gamma_1$ leads to more homogeneous contrast enhancement resulting in 
smoother perception. For $\gamma=\gamma_2$ the focus always lies on the 
neighbourhood centre which implies even more enhancement of local structures 
than in the preceding case.
In summary,  
$\gamma_2$ leads to more enhancement which, however, also creates 
undesired effects in smooth or noisy regions. Thus, we prefer $\gamma_1$ over 
$\gamma_2$.
Further experiments which visualise the effect of the parameters can be found 
in the supplementary material.
%
% ------------------------------------------------------------------------------
%
\subsection{Related Work from an Application Perspective}
Now that we have demonstrated the applicability of our model to digital images 
we want to discuss briefly its relation to other existing theories in the 
context of image processing.

As mentioned in \Sref{sec:app:grey:global}, applying the global model -- with 
%$\bm{\tilde{W}} = \bm{1}\bm{1}\transpose$
the entries of $\bm{\tilde{W}}$ representing the grey value frequencies
-- is identical to histogram 
equalisation (a common formulation can e.g. be found in \cite{GW06}). 
Furthermore, there exist other closely 
related  histogram specification techniques -- such as 
\cite{SC97,NS14a,NWC13} -- which can have the same steady state.
If we compare our evolution with the histogram modification flow introduced by 
Sapiro and Caselles \cite{SC97}, we see that their flow
can also be translated into a combination of repulsion among grey-values and a 
barrier function. However, in \cite{SC97} the repulsive force is constant, 
and the barrier function quadratic. Thus, they cannot be derived from the same 
kind of interaction between the $v_i$ and their reflected counterparts as in 
our paper.

Referring to \Sref{sec:app:local_grey}, there also exist well-known approaches 
which aim to enhance the local image contrast such as adaptive histogram 
equalisation -- see \cite{PAACGGRZZ87} and the references therein -- or 
contrast limited adaptive histogram equalisation \cite{Zu94}. The 
latter technique tries to overcome the over-amplification of noise in mostly 
homogeneous image regions when using adaptive histogram 
equalisation. Both approaches share the basic idea 
with our approach in \Sref{sec:app:local_grey} and perform histogram 
equalisation for each pixel, i.e. the mapping function for every pixel is 
determined using a neighbourhood of predefined size and its corresponding 
histogram. 

Another related research topic is the rich field of colour image enhancement 
which we broach in \Sref{sec:app:colour}.
A short review of existing methods -- as well as two new ideas -- is presented 
in \cite{BK07}. Therein, Bassiou and Kotropoulus also mention the colour gamut 
problem for methods which perform contrast enhancement in a different colour 
space and transform colour coordinates to RGB afterwards. Of particular 
interest are the publications by Naik and Murthy \cite{NM03} and Nikolova and 
Steidl \cite{NS14} whose ideas are used in \Sref{sec:app:colour}. 
Both of them suggest -- based on an affine colour 
transform -- strategies to overcome the colour gamut problem while avoiding 
colour artefacts in the resulting image. A recent approach which also makes use 
of these ideas is presented by Tian and Cohen \cite{TC2017}. Ojo et al.
\cite{OSA16} make use of the HSV colour space to avoid the colour gamut problem 
when enhancing the contrast of colour images. A variational approach for 
contrast enhancement which tries to approximate the hue of the input image was 
recently published by Pierre et al. \cite{PABST17}.
%
% ------------------------------------------------------------------------------
% ------------------------------------------------------------------------------
%
\section{Conclusions and Outlook}
\label{sec:summary}
In our paper we have presented a mathematical model which describes pure 
backward diffusion as gradient descent of strictly convex energies.
The underlying evolution makes use of ideas from the area of 
collective behaviour and -- in terms of the latter -- our model can be 
understood as a fully repulsive discrete first order swarm model.
Not only it is surprising that our model allows backward diffusion to be 
formulated as a convex optimisation problem but also that it is sufficient to 
impose reflecting boundary conditions in the diffusion co-domain in order to 
guarantee stability.
This strategy is contrary to existing approaches which either assume forward or 
zero diffusion at extrema or add classical fidelity terms to avoid 
instabilities.
Furthermore, discretisation of our model does not require sophisticated 
numerics. We have proven that a straightforward explicit scheme is sufficient 
to preserve the stability of the time-continuous evolution.
In our experiments, we show that our model can directly be applied to contrast 
enhancement of digital greyscale and colour images.

We see our contribution mainly as an \emph{example} of stable modelling of 
backward parabolic evolutions that create neither theoretical nor numerical 
problems. We are convinced that this concept has far more widespread 
applications in inverse problems, image processing, and computer vision. 
Exploring them will be part of our future research.
%
% ------------------------------------------------------------------------------
% ------------------------------------------------------------------------------
%
%\begin{acknowledgements}
\subsection*{Acknowledgements}
Our research on intrinsic stability properties of specific backward
diffusion processes goes back to 2005, when Joachim Weickert was visiting
Mila Nikolova in Paris. Unfortunately the diffusion equation considered
at that time did not have the expected properties, such that it took
one more decade to come up with more appropriate models.
Leif Bergerhoff wishes to thank Antoine Gautier and Peter Ochs for fruitful
discussions around the topic of the paper.
This project has received funding from the DFG Cluster of Excellence
\emph{Multimodal Computing and Interaction} and from the
European Union's Horizon 2020 research and innovation programme (grant
agreement No 741215, ERC Advanced Grant {\em INCOVID}).
This is gratefully acknowledged.
%\end{acknowledgements}
%
% ------------------------------------------------------------------------------
%
% BibTeX users please use one of
%\bibliographystyle{spbasic}      % basic style, author-year citations
%\bibliographystyle{spmpsci}      % mathematics and physical sciences
%\bibliographystyle{spphys}       % APS-like style for physics
\bibliographystyle{splncs04}
\bibliography{manuscript.bib}
\end{document}